\newcommand*{\rom}[1]{\expandafter\@slowromancap\romannumeral #1@}
      \theoremstyle{plain}
      \newtheorem{theorem}{Theorem}[section]
      \newtheorem{lemma}[theorem]{Lemma}
      \newtheorem{corollary}[theorem]{Corollary}
      \newtheorem{proposition}[theorem]{Proposition}
      \theoremstyle{definition}
      \newtheorem{definition}[theorem]{Definition}
      \newtheorem*{claim}{Claim}
      \newtheorem*{conjecture}{Conjecture}
      \theoremstyle{remark}
      \newtheorem{remark}[theorem]{Remark}
\newcommand{\rsl}{{\mathrm{SL}}}
\newcommand{\rsp}{{\mathrm{Sp}}}
\newcommand{\rso}{{\mathrm{SO}}}
\newcommand{\s}{{\mathrm{S}}}
\newcommand{\ru}{{\mathrm{U}}}
\newcommand{\su}{{\mathrm{SU}}}
\newcommand{\br}{{\mathbb R}}
\newcommand{\bq}{{\mathbb Q}}
\newcommand{\ra}{\rightarrow}
\newcommand{\tits}{\smash{\overline{\Gamma \backslash X\vphantom{()}}}^T}
\newcommand{\borelserre}{\smash{\overline{\Gamma \backslash X\vphantom{()}}}^{BS}}
      \def\@setcopyright{}
      \def\serieslogo@{}
\begin{document}

%



   \author{Sungwoon Kim}
   \address{Center for Mathematical Challenges,
   Korea Institute for Advanced Study, Hoegiro 85, Dongdaemun-gu,
   Seoul, 130-722, Republic of Korea}
   \email{sungwoon@kias.re.kr}

   \author{Inkang Kim}
   \address{School of Mathematics, Korea Institute for Advanced Study, Hoegiro 85, Dongdaemun-gu,
   Seoul, 130-722, Republic of Korea}
   \email{inkang@kias.re.kr}






   \title[]{Simplicial volume, Barycenter method, and Bounded cohomology}


\begin{abstract}
We show that codimension one dimensional Jacobian of the barycentric straightening map is uniformly bounded for most of the higher rank symmetric spaces. As a consequence, we prove that the locally finite simplicial volume of most $\bq$-rank $1$ locally symmetric spaces is positive, which has been open for many years. Finally we improve the degree theorem for $\bq$-rank $1$ locally symmetric spaces of Connell and Farb. We also address the issue of surjectivity of the comparison map in real rank 2 case.
\end{abstract}

\footnotetext[1]{2000 {\sl{Mathematics Subject Classification.}}
53C21, 53C23, 53C24, 53C35}

\footnotetext[2]{\sl{The first author was supported by the Basic Science Research Program through the National Research Foundation of Korea(NRF) funded by the Ministry of Education, Science and Technology(NRF-2012R1A1A2040663). The second author gratefully acknowledges the partial support of grant(NRF-2014R1A2A2A01005574) and a warm support of MSRI
during his stay.}}


   \keywords{}

   \thanks{}
   \thanks{}

   \dedicatory{}

   \date{}


   \maketitle



\section{Introduction}

Besson, Courtois and Gallot \cite{BCG91}  developed the barycenter method in geometry, which was first constructed by Daudy and Earle \cite{DE} for the hyperbolic plane. Their pioneering work on the barycenter method  enabled them to prove a number of long standing problems, in particular, the minimal entropy rigidity conjecture for compact locally symmetric spaces of rank $1$ and the minimal volume conjecture for compact real hyperbolic spaces. The barycenter method was subsequently extended to higher rank locally symmetric spaces by Connell and Farb \cite{CF03} in proving the degree theorem, which is the generalization of Gromov's volume comparison theorem for compact locally symmetric spaces of higher rank.

One of the important applications of the barycenter method in geometry is the proof of Gromov's conjecture that the simplicial volume of a compact locally symmetric space is strictly positive. The proof was given by Lafont and Schmidt \cite{LS06}.
The simplicial volume of a connected oriented manifold $M$, denoted by $\|M\|$, is an invariant associated to $M$ and depends only on the topology of $M$. Gromov \cite{Gr82} first introduced the simplicial volume in studying the minimal normalized entropy and the minimal volume of a differentiable manifold. He verified that both the minimal normalized entropy and the minimal volume of a differentiable manifold are bounded below by its simplicial volume up to positive constant. For this reason the question was naturally raised as to which manifolds have positive simplicial volumes and Gromov's conjecture has been an important issue in the study of the minimal normalized entropy and the minimal volume of a compact locally symmetric space of noncompact type.

Lafont and Schmidt \cite{LS06} used the barycenter method to define a new straightening procedure with universally bounded volume in top dimension that works for locally symmetric spaces of noncompact type with no local direct factors locally isometric to $\mathbb H^2$ or $\rsl_3 \mathbb R / \rso_3 \mathbb R$. This kind of straightening procedure is essential to Thurston's argument in proving that the simplicial volume of a compact real hyperbolic space is positive. In \cite{CF03}, the uniform upper bound for the volumes of top-dimensional straightened simplices follows from the uniform bound for the Jacobian of the natural map. Together with the work of Thurston \cite{Th} in the case of $\mathbb H^2$ and of Bucher \cite{Bu} for $\rsl_3 \mathbb R / \rso_3 \mathbb R$, Lafont and Schmidt \cite{LS06} could prove Gromov's conjecture for all compact locally symmetric spaces of noncompact type.

Besson, Courtois and Gallot \cite{BCG99} analyzed the $k$-Jacobian of the natural map for negatively curved targets as well as its top dimensional Jacobian. Their result implies that in  rank $1$ case, the volumes of  straightened $k$-simplices in the new straightening procedure given by Lafont and Schmidt are uniformly bounded from above for $k\geq 3$.
The same question also arises naturally in the higher rank case. However there has been no advance in analyzing the $k$-Jacobian of the natural map in higher rank case since the work of Connell and Farb \cite{CF03}. This analysis is strongly related to Dupont's question in  \cite{Du78} of whether for a connected semisimple Lie group, every continuous cohomology class is bounded or not. This question {is} known as \emph{Dupont's conjecture}.
The question is still open and considered one of the major problems in the theory of the continuous bounded cohomology of Lie groups. Once it is proved that the $k$-Jacobian is uniformly bounded, it follows that every continuous cohomology class in degree $k$ is bounded. One aim of the present paper is to provide a way to analyze the $k$-Jacobian of the natural map in the higher rank case. We will give a detailed analysis for the codimension $1$ Jacobian and its applications. As far as we know, this is the first estimate for the $k$-Jacobian in  higher rank case.

\begin{theorem}\label{thm:codim1Jac}
Let $X$ be a symmetric space of noncompact type with no direct factors isometric to $\mathbb R$, $\mathbb H^2$, $\mathbb H^3$, $\mathrm{SL}_3 \mathbb R / \rso(3)$, $\mathrm{SL}_4 \mathbb R / \rso(4)$ or $\mathrm{Sp}_4 \mathbb R /\mathrm U(2)$. Then there exists a constant $C>0$ depending only on $X$ and the chosen Riemannian metric on $\Delta^{\dim(X)-1}$ such that for any $f \in C^0(\Delta^{\dim(X)-1},X)$ there exists a uniform upper bound on the $(\dim(X)-1)$-Jacobian of $st_{\dim(X)-1}(f)$: $$|\mathrm{Jac}(st_{\dim(X)-1}(f))(\delta) | \leq C,$$
where $\delta \in  \Delta^{\dim(X)-1}$ is arbitrary and the Jacobian is computed relative to the fixed Riemannian metric on $\Delta^{\dim(X)-1}$.
\end{theorem}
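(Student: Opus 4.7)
The plan is to follow the strategy pioneered by Besson--Courtois--Gallot for the rank one case and extended to higher rank by Connell--Farb, in which the Jacobian of the barycentrically straightened simplex is controlled by two symmetric tensors arising from the implicit function theorem applied to the barycenter equation. The new ingredient is a careful spectral analysis of the averaged Hessian, exploiting the fact that in codimension one we are allowed to discard exactly one eigendirection of this operator.

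First I recall that for $\delta \in \Delta^{\dim(X)-1}$ with barycentric coordinates $t_i(\delta)$ and $f(v_i) = x_i$, the straightened point $y = st_{\dim(X)-1}(f)(\delta)$ is the unique critical point of the convex function
\[ y \mapsto \sum_i t_i(\delta) \int_{\partial X} B_\theta(y, x_i)\, d\mu_{x_i}(\theta), \]
where $\mu_{x_i}$ denotes the measure on the Furstenberg boundary used by Connell--Farb. Differentiating the critical equation in $\delta$ and applying the implicit function theorem gives
\[ D(st_{\dim(X)-1}(f))_\delta = -H_\delta^{-1} \circ K_\delta, \]
where $H_\delta$ is the averaged Hessian operator on $T_y X$ (symmetric, positive semidefinite) and $K_\delta : T_\delta \Delta^{\dim(X)-1} \to T_y^{*} X$ is built from the variations of the weights and the Busemann differentials at $y$. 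The Jacobian is then $\sqrt{\det(K_\delta^{*} H_\delta^{-2} K_\delta)}$, viewed as a $(\dim(X)-1)\times(\dim(X)-1)$ Gram determinant in the source.

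By the arithmetic--geometric mean inequality applied to the positive operator $H_\delta^{-1} K_\delta K_\delta^{*} H_\delta^{-1}$ together with a Cauchy--Binet expansion, one arrives at an estimate of the shape
\[ |\mathrm{Jac}(st_{\dim(X)-1}(f))(\delta)| \leq C_0 \cdot \frac{(\mathrm{tr}(K_\delta K_\delta^{*}))^{(\dim(X)-1)/2}}{\lambda_2(H_\delta)\, \lambda_3(H_\delta) \cdots \lambda_{\dim(X)}(H_\delta)}, \]
where $\lambda_1(H_\delta) \leq \cdots \leq \lambda_{\dim(X)}(H_\delta)$ are the eigenvalues of $H_\delta$ and $C_0$ depends only on $\dim(X)$ and the chosen metric on $\Delta^{\dim(X)-1}$. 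Only the $\dim(X)-1$ largest eigenvalues of $H_\delta$ enter the denominator, precisely because passing from the top to codimension one Jacobian lets us ignore one bad eigendirection. The numerator is controlled uniformly by a Cauchy--Schwarz estimate on the Busemann differentials against the normalized total measure.

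The core difficulty, and the source of the exclusion list, is therefore a uniform positive lower bound on $\lambda_2(H_\delta), \ldots, \lambda_{\dim(X)}(H_\delta)$. In higher rank the Hessian $DdB_\theta$ of a single Busemann function has a nontrivial kernel along the tangent direction of the maximal flat containing the ray to $\theta$, so $H_\delta$ may degenerate in several flat directions when the measures concentrate near a common regular boundary point. My approach is to decompose $T_y X$ into the restricted root spaces of the symmetric pair defining $X$, express the root-space contribution to $H_\delta$ as an integral of squared root values against $\sum_i t_i \mu_{x_i}$, and then argue via the positivity and convexity of these integrals that at most one of the resulting eigenvalues can become arbitrarily small uniformly in $\delta$. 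Translating this into an explicit numerical inequality involving $\dim(X)$, the real rank, and the restricted root multiplicities, and checking it through the classification of irreducible symmetric spaces of noncompact type, isolates $\mathbb R$, $\mathbb H^2$, $\mathbb H^3$, $\mathrm{SL}_3 \mathbb R/\rso(3)$, $\mathrm{SL}_4 \mathbb R/\rso(4)$ and $\mathrm{Sp}_4 \mathbb R/\mathrm{U}(2)$ as the low-dimensional cases where the inequality just fails. I expect the restricted root space computation showing that at most one eigenvalue of $H_\delta$ can degenerate, together with the case analysis producing the exact exceptional list, to be the main technical obstacles.
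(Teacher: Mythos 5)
Your setup is correct in broad strokes — the implicit function theorem gives $D(st_V)_\delta$ as a ratio of two averaged quadratic forms, and the Jacobian is the corresponding Gram determinant — but your proposed estimate rests on a spectral claim that is false in higher rank, and misses the mechanism the paper actually uses.

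The decisive error is the assertion that ``at most one of the resulting eigenvalues can become arbitrarily small uniformly in $\delta$.'' Writing $K_x$ for the averaged Hessian $\int DdB_{(x,\theta)}\,d\mu$ (which is your $H_\delta$) and $Q_x$ for the projection-averaged operator, one has $\langle K_x v,v\rangle \geq D\langle Q_x v,v\rangle$ and $\mathrm{tr}(Q_x)=n-r$ with $r=\mathrm{rank}(X)$, so up to $r$ eigenvalues of the averaged Hessian can be simultaneously and arbitrarily small (this happens exactly when the measure concentrates near a single regular boundary point and the flat directions degenerate). Dropping a single eigenvalue in passing to codimension one therefore does not give you a uniform lower bound on $\lambda_2\cdots\lambda_{\dim X}$; that product is not bounded away from zero for any $r\geq 2$. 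Relatedly, replacing the numerator $\det H_x^W$ by $(\mathrm{tr}(K_\delta K_\delta^*))^{(\dim X-1)/2}$ via AM--GM throws away precisely the cancellation that makes the ratio bounded: the quantity $\det H_x^W$ can itself be made small along the same degenerating directions, and it is this cancellation — not a lower bound on the eigenvalues of the Hessian — that controls $\mathrm{Jac}_x^W(\mu)=\det H_x^W/(\det K_x^W)^2$.

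What the paper does instead is a codimension-one refinement of the Connell--Farb weak eigenvalue matching. One shows (Theorem \ref{thm:eigenmatching2}) that for each eigenvector $v_i$ of $K_x^W$ with small eigenvalue, one can find an almost-orthonormal pair $v_i',v_i''\in W$ lying in $Q_{v_i}=(\mathrm{Stab}_K(v_i)\cdot\mathcal F)^{\perp}$, so that $\langle H_x^W v_i',v_i'\rangle$ and $\langle H_x^W v_i'',v_i''\rangle$ are bounded by a constant times $\langle K_x^W v_i,v_i\rangle$; the two copies are needed because $\det K_x^W$ enters squared. The new content, relative to the top-dimensional case, is the combinatorial ``Property E'': for every $r$-frame $\{v_1,\dots,v_r\}$ of a maximal flat, one can produce pairwise orthogonal \emph{three}-dimensional subspaces $P_i\subset Q_{v_i}$. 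After intersecting with a codimension-one subspace $W$ each $P_i\cap W$ still contains a $2$-plane, and the matching goes through. Property E reduces to a matrix/transversal-type combinatorial problem solved via the restricted root system case by case, and the exclusion list $\mathbb R,\mathbb H^2,\mathbb H^3,\mathrm{SL}_3\mathbb R/\rso(3),\mathrm{SL}_4\mathbb R/\rso(4),\mathrm{Sp}_4\mathbb R/\ru(2)$ consists of the spaces where the combinatorial algorithm fails (with $\rsl_5\mathbb R/\rso(5)$ requiring a separate ad hoc argument because Property E fails there but the matching theorem still holds). Your dimensional heuristic $\dim X\geq 4\,\mathrm{rank}(X)$ is a genuine necessary condition that does appear as a remark in the paper, but it is nowhere near sufficient, and the actual argument is not a degeneration-count on the Hessian spectrum but a frame-extension problem in the orthogonal complement of the flat.
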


Let $G$ be a connected semisimple Lie group of noncompact type and $X$ the associated symmetric space of dimension $n$.
Then it is well known, and due to van Est \cite{Van} that the continuous cohomology $H^*_c(G,\mathbb R)$ of $G$ is isomorphic to the set of $G$-invariant differential forms $\Omega^*(X)^G$ on $X$.
Dupont \cite{Du78} gave an explicit description of the van Est isomorphism $\mathcal J : \Omega^*(X)^G \rightarrow H_c^*(G,\mathbb R)$ on the level of cochains as follows.
For a $G$-invariant $k$-form $\omega \in \Omega^k(X)^G$, a $k$-cocycle representing $\mathcal J(\omega) \in H^k_c(G,\mathbb R)$ is given by \begin{eqnarray}\label{dupont} \mathcal J(\omega)(g_0,\ldots, g_k) = \int_{[g_0,\ldots,g_k]} \omega \end{eqnarray}
where $[g_0,\ldots,g_k]$ is the geodesic simplex with ordered vertices $g_0 x,\ldots,g_k x$ for $x\in X$.
Dupont provided these explicit cocycles as candidates for bounded cocycles.
In fact it can be easily seen in rank $1$ case that the cocycle $\mathcal J(\omega)$ is bounded if $k\geq 2$ since the volumes of geodesic simplices of dimension at least $2$ are uniformly bounded from above.
Dupont proved the degree $2$ case of his conjecture in \cite{Du78}. Hartnick and Ott then proved it for certain semisimple Lie groups, including Hermitian semisimple Lie groups, in \cite{HO11}. Note that Theorem \ref{thm:codim1Jac} implies the following corollary immediately.

\begin{corollary}\label{cor1.2}
Let $X$ be a symmetric space of noncompact type with no direct factors isometric to $\mathbb R$, $\mathbb H^2$, $\mathbb H^3$, $\mathrm{SL}_3 \mathbb R / \rso(3)$, $\mathrm{SL}_4 \mathbb R / \rso(4)$ or $\mathrm{Sp}_4 \mathbb R /\mathrm U(2)$. Then the volumes of the barycentrically straightened simplices of codimension $1$ are uniformly bounded from above.
\end{corollary}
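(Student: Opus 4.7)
The plan is to deduce Corollary \ref{cor1.2} as a direct integration-type consequence of Theorem \ref{thm:codim1Jac}. The volume of a barycentrically straightened codimension-one simplex $st_{\dim(X)-1}(f)$ is, by the area formula, nothing other than the integral of the absolute value of its Jacobian over the reference simplex, so a uniform pointwise control on the Jacobian translates immediately into a uniform bound on the volume of its image.

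Concretely, I would write
$$\mathrm{Vol}\bigl(st_{\dim(X)-1}(f)\bigr) \;=\; \int_{\Delta^{\dim(X)-1}} \bigl|\mathrm{Jac}(st_{\dim(X)-1}(f))(\delta)\bigr|\, d\mathrm{vol}(\delta),$$
and then apply the pointwise estimate of Theorem \ref{thm:codim1Jac} inside the integrand to obtain
$$\mathrm{Vol}\bigl(st_{\dim(X)-1}(f)\bigr) \;\leq\; C \cdot \mathrm{Vol}(\Delta^{\dim(X)-1}).$$
Since the constant $C$ depends only on $X$ and on the chosen Riemannian metric on $\Delta^{\dim(X)-1}$, and the volume of $\Delta^{\dim(X)-1}$ in that fixed metric is a constant independent of $f$, the right-hand side is the desired uniform upper bound valid for every $f \in C^0(\Delta^{\dim(X)-1}, X)$.

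I do not expect any substantive obstacle here: all of the real analytic work is already absorbed into Theorem \ref{thm:codim1Jac}. The only housekeeping point is to note that the barycentrically straightened simplex is sufficiently regular — smooth in the interior and Lipschitz up to the boundary, as is standard for the barycenter construction of Besson--Courtois--Gallot and its extension by Connell--Farb — so that the area formula applies and the Jacobian is an integrable function on $\Delta^{\dim(X)-1}$. Once this is observed, the corollary follows at once.
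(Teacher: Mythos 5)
Your argument is correct and is exactly the integration step that the paper takes for granted when it says Theorem \ref{thm:codim1Jac} "implies the following corollary immediately": the volume of the straightened simplex is the integral of the Jacobian over $\Delta^{\dim(X)-1}$, and the uniform pointwise bound $C$ gives the uniform volume bound $C\cdot\mathrm{Vol}(\Delta^{\dim(X)-1})$. The housekeeping remark about regularity of the barycentric straightening (smoothness away from the vertices, as in Lafont--Schmidt's property (c) and Connell--Farb's analysis) is the right thing to note and is indeed standard, so there is nothing missing.
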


Since every $G$-invariant differential form on $X$ is closed and bounded, it can be seen that if we use ``barycentrically" straightened simplices instead of geodesic simplices to define the continuous cocycle $\mathcal J(\omega)$, then $\mathcal J(\omega)$ is actually bounded in degree $\dim(X)-1$. Hence it follows that the comparison map of $G$ is surjective in degree $\dim(X)-1$, which gives an affirmative answer for Dupont's conjecture in degree $\dim(X)-1$. Unfortunately it is known that the continuous cohomology of $G$ in degree $\dim(X)-1$ is always trivial. Hence Corollary \ref{cor1.2} does not give anything new in Dupont's conjecture. However we can show the surjectivity of comparison map in a more general context as follows.

\begin{theorem}\label{codim1bounded}
Let $G$ be a connected noncompact semisimple Lie group with no direct factors isogenous to $\mathbb R$, $\rsl_2 \mathbb R$, $\rsl_2 \mathbb C$, $\mathrm{SL}_3 \mathbb R $, $\mathrm{SL}_4 \mathbb R$ or $\mathrm{Sp}_4 \mathbb R$. Let $X$ be the associated symmetric space and $M$ be a complete locally symmetric space covered by $X$. Then the comparison map $$H^*_{cpt,b}(M,\mathbb R)\rightarrow H^*_{cpt}(M,\mathbb R)$$ is surjective in degree $\dim(X)-1$
where $H^*_{cpt}(M,\mathbb R)$ denotes the compactly supported cohomology of $M$.
\end{theorem}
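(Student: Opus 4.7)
The plan is to represent each class in $H^{\dim(X)-1}_{cpt}(M,\mathbb R)$ by a cocycle that is simultaneously bounded and compactly supported, using the barycentric straightening together with Corollary~\ref{cor1.2}. Set $n=\dim(X)$. Given $[\omega]\in H^{n-1}_{cpt}(M,\mathbb R)$, choose a smooth compactly supported closed $(n-1)$-form $\omega$ representing it, with $\mathrm{supp}(\omega)=K\subset M$ compact, and lift $\omega$ to a $\Gamma$-invariant closed form $\tilde\omega$ on $X$. The candidate cocycle is
\[
c_\omega(\sigma) := \int_{st_{n-1}(\sigma)} \omega,
\]
where $st_{n-1}(\sigma)$ denotes the barycentric straightening of $\sigma\colon\Delta^{n-1}\to M$, obtained by lifting $\sigma$ to $X$, applying the Connell--Farb barycentric straightening, and descending via $\Gamma$-equivariance. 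Since $\omega$ is closed and straightening is a chain map, $c_\omega$ is a singular cocycle; and Corollary~\ref{cor1.2} applied to each straightened $(n-1)$-simplex yields $|c_\omega(\sigma)|\le \|\omega\|_\infty\cdot V$, where $V$ is the uniform volume bound, so $c_\omega\in C^{n-1}_b(M,\mathbb R)$.

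The two remaining points are that $c_\omega$ lies in the compactly supported subcomplex and that it represents $[\omega]$. For the first, I need a compact $K'\supset K$ such that $c_\omega(\sigma)=0$ whenever $\sigma(\Delta^{n-1})\cap K'=\emptyset$; this reduces to a bounded-displacement property for the barycentric straightening: for each compact $K$ there exists a compact $K'$ with $st_{n-1}(\sigma)(\Delta^{n-1})\cap K=\emptyset$ whenever $\sigma(\Delta^{n-1})\cap K'=\emptyset$. Working upstairs in $X$, each point of $st_{n-1}(\tilde\sigma)$ is the barycenter of a convex combination of Patterson--Sullivan-type measures based at the lifted vertices of $\tilde\sigma$, and continuity and stability of this barycenter construction keep $st_{n-1}(\tilde\sigma)$ inside a neighborhood of the vertex set whose size is controlled by the geometry of $X$. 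Combining this with properness of the covering projection $X\to M$ on appropriate lifts produces the required $K'$.

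For the second, I would use that barycentric straightening is chain homotopic to the identity through the natural prism homotopy used in Lafont--Schmidt~\cite{LS06}, and that the same bounded-displacement property ensures this homotopy preserves the compactly supported subcomplex. Consequently $c_\omega$ is cohomologous in $C^*_{cpt}(M,\mathbb R)$ to the standard de Rham cocycle $\tilde c(\sigma)=\int_\sigma \omega$, which represents $[\omega]$, and so $[c_\omega]\in H^{n-1}_{cpt,b}(M,\mathbb R)$ is a preimage of $[\omega]$ under the comparison map, establishing surjectivity in degree $n-1$.

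The main obstacle I anticipate is the bounded-displacement estimate used above: Corollary~\ref{cor1.2} is an essentially infinitesimal volume bound, whereas the compact-support requirement is a global geometric condition on the images of straightened simplices. In the non-uniform setting (the $\mathbb Q$-rank~$1$ case that motivates the rest of the paper) controlling the straightening near the cusps is where the barycenter method must interact nontrivially with the end geometry of $M$, and this is the delicate technical step of the argument.
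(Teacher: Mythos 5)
Your proposal follows the same route as the paper's proof: take a compactly supported closed $(n-1)$-form $\omega$, pull back the de Rham cocycle along barycentric straightening to get $\Psi_b(\omega)(\sigma)=\int_{st_{n-1}(\sigma)}\omega$, use Corollary~\ref{cor1.2} to bound $|\Psi_b(\omega)(\sigma)|$ by $\|\omega\|_\infty$ times the uniform volume bound, and use that $st_*$ is ($\Gamma$-equivariantly) chain homotopic to the identity to conclude that $\Psi_b(\omega)$ and $\Psi(\omega)$ represent the same class. The paper's proof is quite terse and only spells out the boundedness; it asserts the cohomological equality and does not explicitly verify that $\Psi_b(\omega)$ (or the connecting chain homotopy) lies in the compactly supported subcomplex.

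You correctly single out that compact support of $\Psi_b(\omega)$ is the nontrivial point, and you are honest that you have not carried it out. But the ``bounded-displacement'' property you propose is, as stated, not true, and this is worth noting concretely. Barycentric straightening depends \emph{only on the vertex set} of $\sigma$; requiring $\sigma(\Delta^{n-1})$ to avoid a large compact $K'$ only forces the lifted vertices $\tilde\sigma(e_i)$ to lie in $\pi^{-1}(M\setminus K')$, and this puts essentially no constraint on the geodesic hull those vertices span in $X$. For a noncompact $M$ with a cusp, one can pick a simplex whose image lies entirely far up the cusp, yet whose vertices, once lifted, have a barycentric hull that dips into the thick part of $M$ and sweeps through $\mathrm{supp}(\omega)$. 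Thus $st_{n-1}(\sigma)$ can meet $\mathrm{supp}(\omega)$ even when $\sigma$ itself does not, so $\Psi_b(\omega)$ need not vanish outside any compact set of simplices, and the stability-of-the-barycenter heuristic does not repair this. When $M$ is compact the issue disappears (and the statement is already known by property (d) of Lafont--Schmidt), so the genuine content and the genuine gap are both in the noncompact case; note that the paper's own treatment of the noncompact $\mathbb{Q}$-rank one situation avoids this difficulty by working instead in the conic chain complex of the Tits compactification rather than with compactly supported singular cochains directly. So: same approach as the paper, but your write-up makes visible a compactness step that the paper leaves implicit, and the specific lemma you reach for to close it would need to be replaced.
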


When $\Gamma$ is a cocompact lattice in a connected semisimple Lie group of rank $1$, it is well known that the comparison map $H^d_b (\Gamma,\mathbb R) \rightarrow H^d(\Gamma,\mathbb R)$ is surjective for any $d\geq 2$. This is due to the fact that the geodesic simplices in the corresponding symmetric space have uniformly bounded volume. In a more general setting, Mineyev \cite{Min1} proved that if $\Gamma$ is a nonelementary Gromov-hyperbolic group, then the comparison map $H^d_b (\Gamma,E) \rightarrow H^d(\Gamma,E)$ is surjective for every $d\geq 2$ and every Banach $\Gamma$-module $E$. Furthermore, it turns out that this surjectivity characterizes nonelementary Gromov-hyperbolic groups \cite{Min2}.
In higher rank case,  Lafont suggested us a conjecture on this problem as follows.

\begin{conjecture}
Let $G$ be a connected noncompact simple Lie group and $X$ an irreducible symmetric space. Given a geodesic $\gamma$ in $X$, let $F(\gamma)$ denote the union of all geodesics in $X$ that are parallel to $\gamma$. Let $t_X = \max_{\gamma} \dim(F(\gamma))$ where the maximum is taken over all geodesics $\gamma$ in $X$. Let $\Gamma$ be a cocompact lattice in $G$.
Then the comparison map $$H^d_b (\Gamma,\mathbb R) \rightarrow H^d(\Gamma,\mathbb R)$$ is surjective in $d \geq t_X+1$.
\end{conjecture}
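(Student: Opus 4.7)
The plan is to mimic the strategy that produced Theorem \ref{codim1bounded} but in arbitrary degree $d\ge t_X+1$. Fix a cocompact lattice $\Gamma<G$ and set $M=\Gamma\backslash X$; then $H^d(\Gamma,\mathbb R)=H^d(M,\mathbb R)$, and by van Est--Dupont every such class is represented at the continuous level by a cocycle of the form \eqref{dupont} coming from a $G$-invariant form $\omega\in\Omega^d(X)^G$. Replacing each geodesic simplex $[g_0,\ldots,g_d]$ by the barycentrically straightened simplex with the same ordered vertices produces a cohomologous cocycle, and this new cocycle is bounded as soon as the volumes of all barycentrically straightened $d$-simplices in $X$ are uniformly bounded. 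Surjectivity of the comparison map in degree $d$ then follows in the same way as in the proof of Theorem \ref{codim1bounded}.

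The heart of the argument is therefore the analogue of Theorem \ref{thm:codim1Jac} in degree $d$: one needs to show that for every $f\in C^0(\Delta^d,X)$ and every $\delta\in\Delta^d$ one has $|\mathrm{Jac}(st_d(f))(\delta)|\le C(X,d)$. As in the codimension-$1$ analysis, this pointwise Jacobian is controlled by a ratio of determinants of symmetric operators on $T_pX$ (with $p=st_d(f)(\delta)$) built from moment integrals of the pulled-back Patterson--Sullivan-type measure on the Furstenberg boundary against the differentials of the Busemann functions. The role of the invariant $t_X$ is the following: the eigenspaces on which the denominator can degenerate consist of directions tangent to a parallel set $F(\gamma)$ of some geodesic $\gamma\subset X$. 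Since $\dim F(\gamma)\le t_X$ for every $\gamma$, the degenerate subspace has dimension at most $t_X$, and in target dimension $d\ge t_X+1$ the image of $d(st_d(f))_\delta$ cannot be contained in it. Hence at least one direction in the image contributes a uniformly positive eigenvalue, and a Cauchy--Schwarz manipulation in the style of the codimension-$1$ argument yields the desired uniform bound.

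The main obstacle I foresee is precisely this linear-algebra estimate. In codimension $1$ it was already necessary to exclude by hand the small-dimensional targets $\mathbb{H}^2$, $\mathbb{H}^3$, $\mathrm{SL}_3\mathbb R/\rso(3)$, $\mathrm{SL}_4\mathbb R/\rso(4)$, $\rsp_4\mathbb R/\ru(2)$, because there the numerology $\dim X-1$ versus $t_X$ was borderline. In general degree these exceptional configurations will multiply, and a truly uniform treatment likely requires a case-free comparison between the root-system combinatorics of $X$ and the possible dimensions of parallel sets $F(\gamma)$ as $\gamma$ ranges over both regular and singular geodesics. A secondary, more routine issue is to pass from the resulting bounded continuous $G$-cocycle to a bounded cocycle for $\Gamma$, which uses the standard isomorphism $H^*_{c,b}(G,\mathbb R)\simeq H^*_b(\Gamma,\mathbb R)$ valid for cocompact $\Gamma$ and therefore does not obstruct the overall strategy.
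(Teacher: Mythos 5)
The statement you are trying to prove is explicitly a \emph{conjecture} in the paper (attributed to Lafont), and the paper does not establish it. What the paper does prove is a partial result in the irreducible rank-$2$ case (Theorems \ref{rank2jacobian} and \ref{rank2thm2}), and even there only for $d \geq \max\{6, t_X+2\}$; the authors explicitly remark that the case $d=t_X+1$ remains open in almost all rank-$2$ cases, and nothing at all is proved in rank $\geq 3$. So there is no ``paper's own proof'' of the conjecture against which to compare, and your proposal cannot be a correct proof because a correct proof does not exist in the paper.

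Your sketch reproduces the paper's general strategy faithfully (van Est--Dupont representation, barycentric straightening, reduction to a uniform Jacobian bound on $d$-simplices, then $H^*_{c,b}(G)\cong H^*_b(\Gamma)$ for cocompact $\Gamma$). The genuine gap is exactly in the ``linear-algebra estimate'' you flag, and your dimension count there is not tight enough. You argue that since the degenerate subspace associated to a bad eigendirection has dimension at most $t_X$, any $d$-dimensional image with $d\geq t_X+1$ must escape it in at least one direction. But the Connell--Farb mechanism that actually produces a lower bound on $\det K^W$ and an upper bound on $\det H^W$ requires, for each of the (possibly up to $\mathrm{rank}(X)$) bad eigenvectors $v_i$, \emph{two} almost-orthonormal directions $v_i',v_i''$ lying in $W\cap Q_{v_i}$, and these must together form an almost-orthonormal $2k$-frame. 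Escaping the degenerate subspace by a single direction gives no Cauchy--Schwarz gain of the form \eqref{eqn:kh}; you need $\dim(W\cap Q_{v_i})\geq 2$, which in the rank-$2$ argument of Theorem \ref{rank2jacobian} forces $d\geq t_X+2$, not $t_X+1$, and additionally forces $d\geq 6$ so that $W\cap\mathcal F^\bot$ is large enough to absorb the $v_2',v_2''$ pair. In higher rank the combinatorics of packing all $2k$ directions into $W$ without collisions is precisely the content of Property $\mathbf{E}$ and the weak eigenvalue matching theorem, and that is a substantive open problem, not a routine numerological check. So your argument, as written, proves at best the same range $d\geq\max\{6,t_X+2\}$ in rank $2$ and nothing beyond, which falls short of the conjectured $d\geq t_X+1$ and of all higher-rank cases.
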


According to \cite[Proposition 2.11.4]{Eb}, for any geodesic $\gamma$ in $X$, the set $F(\gamma)$ is a complete, totally geodesic submanifold of $X$ which is isometric to $\mathbb R^k \times Y$ where $Y$ is a symmetric space of noncompact type.
Note that this conjecture is true for rank $1$ simple Lie groups since $t_X=1$ if $X$ is an irreducible symmetric space of rank $1$. We study further the Jacobian of straightened $d$-simplices for $d \geq t_X+1$ in rank $2$ case, with applications to the above conjecture  in mind.

\begin{theorem}\label{rank2jacobian}
Let $X$ be an irreducible symmetric space of noncompact type of rank $2$.
Let $t_X$ be the maximal dimension of a totally geodesic subspace of $X$ which splits isometrically as a non-trivial product.
Then for any $d \geq \max \{6, t_X+2 \}$, there exists a constant $C>0$ depending only on $X$ and the chosen Riemannian metric on $\Delta^d$ such that for any $f \in C^0(\Delta^d,X)$ there exist a uniform upper bound on the $d$-Jacobian of $st_d(f)$: $$|\mathrm{Jac}(st_d(f))(\delta) | \leq C,$$
where $\delta \in  \Delta^d$ is arbitrary and the Jacobian is computed relative to the fixed Riemannian metric on $\Delta^d$.
\end{theorem}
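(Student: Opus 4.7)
The plan is to adapt the approach of Theorem \ref{thm:codim1Jac} from codimension one to general dimension $d$, using the rank 2 structure to exclude the degeneracies that can occur on higher-codimension subspaces. Fix $f\in C^0(\Delta^d,X)$ and $\delta\in\Delta^d$, set $y=st_d(f)(\delta)$, and let $V=\mathrm{Image}\,(d\,st_d(f))_\delta\subset T_yX$. Following Besson--Courtois--Gallot and Connell--Farb, the Jacobian is controlled by the symmetric positive semidefinite operators
\[
H(y)=\int_{\partial X}DdB_\theta(y)\,d\mu_y(\theta),\qquad K(y)=\int_{\partial X}dB_\theta(y)\otimes dB_\theta(y)\,d\mu_y(\theta),
\]
where $\mu_y$ is the Patterson--Sullivan-type probability measure defining the straightening. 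Cauchy--Schwarz yields the pointwise estimate $|\mathrm{Jac}(st_d(f))(\delta)|\leq \sqrt{\det(K|_V)}/\det(H|_V)$, and the numerator is bounded above because $\mu_y$ is a probability measure. The task reduces to a uniform spectral lower bound on $H|_V$.

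Such a lower bound can only fail when $\mu_y$ concentrates on a subset of the Furstenberg boundary whose Busemann one-forms $dB_\theta$ uniformly annihilate a subspace $W\subset V$. The key algebraic dichotomy I would establish is the following: for irreducible rank 2 $X$, either $H|_V$ admits a spectral lower bound depending only on $X$, or $W$ is tangent at $y$ to a parallel set $F(\gamma)\cong\mathbb R^k\times Y$. By definition of $t_X$ such a parallel subspace has dimension at most $t_X+1$ once one includes the geodesic direction itself. The hypothesis $d\geq t_X+2$ then forces $V$ to contain a definite direction transverse to every such parallel set, yielding a uniform lower bound on $\det(H|_V)$. A compactness argument on $\mathrm{Gr}_d(T_yX)\times\mathcal P(\partial X)$ modulo the isometry group of $X$ upgrades this qualitative non-degeneracy to the uniform constant $C$, since the map $(V,\mu_y)\mapsto\sqrt{\det(K|_V)}/\det(H|_V)$ is continuous.

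The main obstacle is the algebraic dichotomy itself. In rank 2 one has to analyze the Weyl-chamber stratification of $\partial X$ for each of the root systems $A_2$, $B_2$ and $G_2$ that can occur, identifying the kernel of $dB_\theta$ on the bad support with the tangent space of the parallel set through $\gamma_\theta$, and distinguishing between regular and singular directions at infinity. The auxiliary condition $d\geq 6$ enters this case analysis as the threshold guaranteeing that, for every irreducible rank 2 $X$ and every parallel set $F(\gamma)$, the transverse complement of $F(\gamma)$ inside a $d$-dimensional $V$ still contains enough linearly independent directions to keep the determinant ratio bounded below uniformly; below this threshold the algebra of the small rank 2 targets admits accidental collapses that are incompatible with a single $X$-dependent constant. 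Once the dichotomy is proved for each root system, the Jacobian bound follows by the compactness argument above, uniformly in $f$ and $\delta$.
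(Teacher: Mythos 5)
Your proposal departs substantially from the paper's proof, and the central reduction you make does not hold. You write that since the numerator $\sqrt{\det(K|_V)}$ is bounded above (because $\mu_y$ is a probability measure), ``the task reduces to a uniform spectral lower bound on $H|_V$.'' In the paper's notation this says the task is to bound $\det K_x^W$ away from zero uniformly; no such bound exists. As $\mu_y$ concentrates near a single regular point $\theta \in \partial_F X$, the form $\int DdB_\theta\,d\mu_y$ approaches $DdB_{(x,\theta)}$, whose kernel is the tangent space of the flat $\mathcal F_\theta$, and if a $d$-dimensional subspace $W$ is close to containing a flat direction then $\det K_x^W$ degenerates, no matter how large $d$ is. The entire content of Connell--Farb's Theorem \ref{thm:connell-farb}, and of the paper's extension to subspaces, is that when an eigenvalue of $K_x^W$ is small, one can \emph{match} it with two directions along which $\langle H_x^W v,v\rangle$ is comparably small, so that the ratio $\det H_x^W /(\det K_x^W)^2$ stays bounded. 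Your reduction to a lower bound on the denominator alone bypasses the need for matching, but the bound you seek is false, and your ``algebraic dichotomy'' (spectral gap vs.\ tangency to a parallel set) doesn't provide the needed decay of the numerator; it only describes when the denominator can be small.

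The compactness step is also unsound: the measures $\mu_y$ arising here must be fully supported on $\partial_F X$, an open condition, so the relevant subset of $\mathcal P(\partial_F X)$ is not weak-* compact, and the Jacobian ratio is not continuous at the boundary, where it can blow up. A compactness argument cannot replace the uniform quantitative estimate --- if it could, the whole Connell--Farb machinery would be unnecessary.

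The paper instead proves Theorem \ref{rank2jacobian} by extending the weak eigenvalue matching Theorem \ref{thm:eigenmatching2} to a $d$-dimensional subspace $W$. In rank $2$ the matching problem involves only two flat directions $v_1, v_2$. One arranges $v_2$ to be regular, so $Q_{v_2} = \mathcal F^\bot$, and $v_1$ to be maximally singular with $\dim Q_{v_1}$ minimal, whence $t_X = \dim X - \dim Q_{v_1}$. The needed matching pairs exist precisely when $\dim(W \cap Q_{v_1}) \geq 2$ and $\dim(W \cap \mathcal F^\bot) \geq 4$; the first inequality is equivalent to $d \geq t_X + 2$, the second to $d \geq 6$. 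This dimension count, not a dichotomy plus compactness, is what yields the hypothesis $d \geq \max\{6, t_X + 2\}$.
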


Theorem \ref{rank2jacobian} implies

\begin{theorem}\label{rank2thm2}
Let $G$, $X$, $t_X$ be as in Theorem \ref{rank2jacobian}. Let $M$ be a complete locally symmetric space covered by $X$.
Then the comparison map $$H^d_{c,b}(G,\mathbb R)\rightarrow H^d_c(G,\mathbb R)$$ is surjective in all degrees $d \geq \max \{6, t_X+2\}$. Furthermore, the comparison map $$H^d_{cpt,b} (M,\mathbb R) \rightarrow H^d_{cpt}(M,\mathbb R)$$ is also surjective in all degrees $d \geq \max \{6, t_X+2 \}$.
\end{theorem}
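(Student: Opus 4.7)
The strategy is to run in each degree $d\geq \max\{6,t_X+2\}$ the same ``bounded-straightening'' reduction that proves Theorem \ref{codim1bounded} from Theorem \ref{thm:codim1Jac}: Theorem \ref{rank2jacobian} now supplies a uniform Jacobian bound $C$ for the barycentric straightening $st_d$, which forces any integral $\int_{st_d(\sigma)}\omega$ to satisfy $|\int_{st_d(\sigma)}\omega|\leq C\|\omega\|_\infty\operatorname{Vol}(\Delta^d)$. Two surjectivity statements must be treated in parallel, one at the Lie-group level and one at the locally symmetric quotient.

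\textbf{Surjectivity of $H^d_{c,b}(G,\br)\to H^d_c(G,\br)$.} By van Est \cite{Van}, any class is represented by a Dupont cocycle of the form \eqref{dupont}, namely $\mathcal J(\omega)(g_0,\ldots,g_d)=\int_{[g_0x,\ldots,g_dx]}\omega$ for some $\omega\in\Omega^d(X)^G$. Replacing the geodesic $d$-simplex by its barycentric straightening defines a $G$-invariant continuous cocycle
$$\mathcal J_{st}(\omega)(g_0,\ldots,g_d)=\int_{st_d([g_0x,\ldots,g_dx])}\omega,$$
which Theorem \ref{rank2jacobian} bounds by $C\|\omega\|_\infty\operatorname{Vol}(\Delta^d)$. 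To identify $[\mathcal J(\omega)]=[\mathcal J_{st}(\omega)]$ I would invoke the $G$-equivariant prism homotopy: since $X$ is simply connected and nonpositively curved, there is a canonical map $H\colon [0,1]\times\Delta^d\to X$ by geodesic interpolation between the geodesic simplex (at $t=0$) and the straightened simplex (at $t=1$); Stokes combined with $d\omega=0$ then yields the explicit $(d-1)$-cochain $h(g_0,\ldots,g_{d-1})=\int_{[0,1]\times\Delta^{d-1}}H^*\omega$ with coboundary $\mathcal J(\omega)-\mathcal J_{st}(\omega)$.

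\textbf{Surjectivity of $H^d_{cpt,b}(M,\br)\to H^d_{cpt}(M,\br)$.} Take $[\omega]\in H^d_{cpt}(M,\br)$ represented by a compactly supported closed form $\omega$ with $\operatorname{supp}\omega\subset K\subset M$, and define the singular cochain $c_\omega(\sigma):=\int_{st_d(\sigma)}\omega$. The Jacobian bound again gives $|c_\omega(\sigma)|\leq C\|\omega\|_\infty\operatorname{Vol}(\Delta^d)$, and the prism homotopy from the previous step, performed $\Gamma$-equivariantly on $X$ and pushed down to $M$, certifies that $[c_\omega]$ maps to $[\omega]$ under the comparison map. Compact support of $c_\omega$ as a singular cochain follows once one establishes that $st_d(\sigma)$ is contained in a geodesic neighborhood of $\{\sigma(v_0),\ldots,\sigma(v_d)\}$ whose radius depends only on the pairwise vertex distances: for any sufficiently large compact $K'\supset K$ and any $\sigma$ whose image lies in $M\setminus K'$, one gets $st_d(\sigma)\cap K=\emptyset$ and hence $c_\omega(\sigma)=0$, exhibiting $c_\omega$ as a bounded cochain supported on simplices that meet $K'$.

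\textbf{Main obstacle.} The principal technical content is the compact-support portion of the second claim: one must quantify how far the barycentrically straightened simplex can drift from its vertex set, and verify that the prism cochain homotopy can be chosen so its contribution on simplices far from $K$ also vanishes. Both points are geometric rather than cohomological, reducing to uniform continuity/properness properties of the barycenter construction on tuples of points in $X$ of the sort that already underlie Theorem \ref{rank2jacobian}; once they are in place, the rest of the argument is formal and parallels the derivation of Theorem \ref{codim1bounded}.
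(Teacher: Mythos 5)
Your first half (surjectivity of $H^d_{c,b}(G,\mathbb R)\to H^d_c(G,\mathbb R)$) follows the paper's own route exactly: replace the Dupont geodesic cocycle by the barycentrically straightened cocycle, bound it via the Jacobian estimate of Theorem \ref{rank2jacobian}, and observe that the two cocycles are cohomologous because the barycentric straightening is $G$-equivariantly chain homotopic to the identity. The ``geodesic prism homotopy'' you propose to construct by hand is precisely the chain homotopy $H_*$ the paper already introduces (see the proof of Lemma \ref{lem:bounded}), so there is no genuine difference there.

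The problem is in your treatment of the second half, specifically the compact-support argument. You claim that if one shows $\mathrm{im}(st_d(\sigma))$ lies in a geodesic neighborhood of $\{\sigma(e_0),\dots,\sigma(e_d)\}$ whose radius depends only on the pairwise vertex distances, then for any $\sigma$ with image in $M\setminus K'$ one gets $st_d(\sigma)\cap K=\emptyset$. That inference does not hold: the radius you allow depends on the \emph{pairwise distances}, which can be arbitrarily large even when every vertex individually lies far from $K$. Concretely, take vertices spread symmetrically on a large sphere enclosing $K$; every vertex lies in $M\setminus K'$, yet by symmetry the barycenter $bar(\widehat V(\delta))$ for $\delta$ near the center of $\Delta^d$ lands near the center of that sphere, i.e.\ inside $K$. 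So $c_\omega(\sigma)=\int_{st_d(\sigma)}\omega$ need not vanish, and the cochain $c_\omega$ is in general not compactly supported. The same phenomenon afflicts the prism cochain $\beta(\tau)=\int H^*\omega$, so you also cannot conclude that $c_\omega$ is compactly-support-cohomologous to $\Psi(\omega)$. To patch this, one should work in a subcomplex of ``small'' singular simplices (simplices subordinate to a cover by bounded balls, which computes the same cohomology): for such simplices the vertex set has uniformly bounded diameter, the straightened simplex and the prism stay within a uniformly bounded neighborhood of the original image, and the vanishing outside a compact set becomes genuine. The paper itself passes over this point silently (``it is not difficult to see that $\Psi_b(\omega)$ is actually bounded''), but since you explicitly identify compact support as the crux, you need to supply a correct mechanism; the one you give fails.
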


Note that Hartnick and Ott \cite{HO11} did not prove the surjectivity of the comparison map for some Lie groups such as complex simple Lie groups and $\rsl_{n+1}\mathbb R$. In rank $2$ case, Theorem \ref{rank2thm2} gives a partial affirmative answer to Dupont's conjecture even in the case that Hartnick and Ott did not cover.

Applying Theorem \ref{rank2thm2} to a cocompact lattice $\Gamma$ in $G$, it easily follows that the comparison map  $$H^d_b (\Gamma,\mathbb R) \rightarrow H^d(\Gamma,\mathbb R)$$ is surjective in all degrees $d \geq \max \{6, t_X+2 \}$.
Indeed we show that $\max \{6, t_X+2 \} =t_X+2$ in all rank $2$ cases except $\rsl_3 \mathbb R$, $\rsp_4 \mathbb R$ and $G_{2(2)}$. Hence this implies that the conjecture is true except for the only one case $d = t_X+1$ in almost all rank $2$ cases.
As far as we know, this is the first evidence supporting the conjecture. This might be a starting point for studying the surjectivity of the comparison map for $\mathrm{CAT}(0)$ groups.

As another important application of Theorem \ref{thm:codim1Jac}, we show that the simplicial volumes of most $\mathbb Q$-rank $1$ locally symmetric spaces are strictly positive. Together with the compact locally symmetric spaces of noncompact type, the simplicial volumes of noncompact locally symmetric spaces of finite volume and noncompact type have been studied. Thurston \cite{Th} proved that the simplicial volume of a noncompact $\mathbb R$-rank $1$ locally symmetric space of finite volume is positive.
In the higher rank case, since every lattice is arithmetic, the $\mathbb Q$-rank of a locally symmetric space of finite volume and noncompact type is well defined. For instance, the $\mathbb Q$-rank of a compact locally symmetric space is $0$ and
a noncompact locally symmetric space of finite volume has $\mathbb Q$-rank at least $1$.
The $\mathbb Q$-rank seems to be an essential factor in determining the positivity of the simplicial volume of a noncompact locally symmetric space of finite volume. L\"{o}h and Sauer \cite{LS09-1} gave the first examples of noncompact higher rank locally symmetric spaces of finite volume having positive simplicial volumes. In fact the examples were Hilbert modular varieties that are one examples of $\mathbb Q$-rank $1$ locally symmetric spaces. Furthermore Kim and Kim \cite{KK12} proved that every $\mathbb Q$-rank $1$ locally symmetric space covered by a product of $\mathbb R$-rank $1$ symmetric spaces has positive simplicial volume.
In accordance with the results so far obtained, it has been conjectured that every $\mathbb Q$-rank $1$ locally symmetric space has positive simplicial volume. We prove that this conjecture is true for most $\mathbb Q$-rank $1$ locally symmetric spaces.

\begin{theorem}\label{thm:Qrank1}
Let $X$ be a symmetric space of noncompact type with no direct factors isometric to $\mathbb R$, $\mathbb H^2$, $\mathbb H^3$, $\mathrm{SL}_3 \mathbb R / \rso(3)$, $\mathrm{SL}_4 \mathbb R / \rso(4)$ or $\mathrm{Sp}_4 \mathbb R /\mathrm U(2)$. Then the simplicial volume of a $\mathbb Q$-rank $1$ locally symmetric space covered by $X$ is positive.
\end{theorem}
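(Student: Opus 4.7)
The plan is to prove $\|M\|_{lf}>0$ by reducing to a duality bound on a compact manifold with boundary. Let $\overline{M}^{BS}$ denote the Borel--Serre compactification of $M$; for a $\mathbb Q$-rank one locally symmetric space this is a compact manifold with boundary whose boundary components are nilmanifold bundles over compact locally symmetric spaces arising from the proper $\mathbb Q$-parabolic subgroups. Using the standard inequality $\|M\|_{lf}\geq \|\overline{M}^{BS},\partial\overline{M}^{BS}\|$ (as in L\"oh--Sauer and Kim--Kim) together with Gromov's duality for relative simplicial volume, it suffices to exhibit a bounded relative cocycle $\omega \in C^n_b(\overline{M}^{BS},\partial\overline{M}^{BS};\mathbb R)$ that pairs nontrivially with the relative fundamental class, in which case
$$\|\overline{M}^{BS},\partial\overline{M}^{BS}\| \geq \frac{\mathrm{vol}(M)}{\|\omega\|_\infty}>0.$$

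The cocycle $\omega$ will be the barycentric-straightening cocycle: for a singular $n$-simplex $\sigma:\Delta^n\to \overline{M}^{BS}$, set $\omega(\sigma)=\int_{st_n(\widetilde\sigma)} dV_X$, where the straightening is applied to a lift of $\sigma$ to the bordification of $X$, and vertices landing on the bordification boundary are handled by extending the barycenter construction using Patterson--Sullivan-type measures supported on the corresponding Furstenberg stratum. On simplices with vertices in the interior, boundedness of $\omega$ follows from the top-dimensional Jacobian bound of Connell--Farb, which holds under our hypotheses on $X$. Evaluating $\omega$ on a fundamental cycle recovers $\mathrm{vol}(M)$, so $\omega$ is cohomologically nontrivial.

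The decisive step --- and the place where Theorem~\ref{thm:codim1Jac} enters --- is controlling $\omega$ on simplices whose vertices approach the Borel--Serre boundary. In the $\mathbb Q$-rank one case, when the vertices of $st_n(\widetilde\sigma)$ concentrate at a single $\mathbb Q$-parabolic fixed point, the image of the straightening collapses into a horospherical slice of dimension at most $n-1$, forcing the $n$-Jacobian to vanish, so that $\omega(\sigma)=0$. To make this collapse quantitative --- so that $\omega$ remains uniformly bounded as the vertices drift toward the cusp rather than blowing up along the horospherical direction --- one applies the codimension-one Jacobian bound of Theorem~\ref{thm:codim1Jac} to the induced straightening on the horosphere. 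The same argument yields vanishing of $\omega$ on simplices genuinely supported in $\partial\overline{M}^{BS}$, so $\omega$ is a bona fide relative cocycle.

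The main obstacle is a careful extension of the barycentric straightening across the bordification of $X$, so that the volume integrals defining $\omega$ are continuous in the vertex data all the way to the Furstenberg stratum of the cusp, and so that the codimension-one degeneration at $\mathbb Q$-rank one cusps falls squarely into the regime in which Theorem~\ref{thm:codim1Jac} supplies a uniform bound. The fact that the list of excluded factors in the hypothesis coincides exactly with those imposed by Theorem~\ref{thm:codim1Jac} is consistent with the codimension-one Jacobian bound being the essential new input beyond the Connell--Farb top-dimensional estimate used by Lafont--Schmidt in the compact case.
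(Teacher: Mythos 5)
There is a genuine gap at the decisive step. You need $\omega$ to be a \emph{relative} cocycle, i.e., to vanish on singular $n$-simplices supported in $\partial\borelserre$, and you assert this follows because ``the image of the straightening collapses into a horospherical slice of dimension at most $n-1$.'' This is not justified, and I do not believe it is true: the barycenter of a weighted sum of Patterson--Sullivan measures based at points of a horosphere is not constrained to lie on that horosphere (the barycenter construction pushes into the interior of $X$), so the $n$-Jacobian of $st_n$ need not vanish on such simplices. Worse, simplices genuinely supported in $\partial\borelserre$ live in the Borel--Serre boundary --- a nilmanifold bundle over a lower-rank compact locally symmetric space --- not in $\Gamma\backslash X$, and the Albuquerque density $\nu : X\to \mathcal M(\partial X)$ is simply not defined there; ``extending the barycenter construction to the Furstenberg stratum'' is a nontrivial construction you have not supplied, and there is no indication that any such extension would produce a cochain that is simultaneously bounded, closed, and vanishing on the boundary. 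Your proposed quantitative mechanism --- applying Theorem~\ref{thm:codim1Jac} ``to the induced straightening on the horosphere'' --- also does not correspond to any statement of that theorem, which bounds the Jacobian of the barycenter map restricted to an arbitrary codimension-one subspace of $T_xX$, not a straightening confined to a horosphere.

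The paper avoids all of this by never attempting a relative cocycle on $(\borelserre,\partial\borelserre)$. It works instead with Gromov's seminorm $\|\cdot\|(1)$: Lemma~\ref{lem:keyinequal} gives $\|\borelserre,\partial\borelserre\|(1)\le\|\Gamma\backslash X\|_\mathrm{lf}$, and by Corollary~\ref{cor:conicnorm} this quantity equals the $\ell^1$-seminorm of the fundamental class of the disjoint mapping cone $Dcone(\Omega,\partial\Omega)$ in the \emph{conic} chain complex. In $\bq$-rank one this disjoint mapping cone is the Tits compactification $\tits$, in which each cusp is crushed to a single ideal point. The bounded cocycle $\Theta_b$ is then built on the conic complex: on a cone simplex $cone(\tau)$ --- which has exactly \emph{one} ideal vertex, not all $n+1$ concentrating at a cusp --- it is defined as the volume of the geodesic cone over the barycentrically straightened $(n-1)$-face $st_{n-1}(\tau)$. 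The role of Theorem~\ref{thm:codim1Jac} is to give a uniform bound on $\mathrm{Vol}_{n-1}(st_{n-1}(\tau))$, and Lemma~\ref{lem:conevol} then bounds the cone volume by $\mathrm{Vol}_{n-1}(st_{n-1}(\tau))/(2\|\rho_{\mathbf P}\|)$. So the codimension-one estimate controls the volume of the codimension-one base of a single-ideal-vertex cone, rather than quantifying a collapse of top-dimensional straightened simplices near a cusp, and the compactification on which the cocycle lives is the Tits (end) compactification rather than the Borel--Serre one.
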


Unlike the $\mathbb Q$-rank $1$ case, there has been no evidence so far that higher $\mathbb Q$-rank locally symmetric spaces have positive simplicial volumes. Indeed L\"{o}h and Sauer \cite{LS09} proved that the simplicial volume of a locally symmetric space of $\mathbb Q$-rank at least $3$ vanishes, based on Gromov's vanishing-finiteness theorem \cite[Corollary A, p.58]{Gr82}. Now $\mathbb Q$-rank $2$ locally symmetric spaces are the only possible class of higher $\mathbb Q$-rank locally symmetric spaces which might have positive simplicial volumes. The $\mathbb Q$-rank $2$ case still remains open.

If we restrict ourself to the irreducible symmetric spaces, we exclude four irreducible symmetric spaces in Theorem \ref{thm:Qrank1}.
Due to Thurston's work, $\mathbb Q$-rank $1$ locally symmetric spaces covered by $\mathbb H^2$ or $\mathbb H^3$ have positive simplicial volumes. Furthermore since the minimal rational parabolic subgroup of $\rsl_3 \mathbb R$ is amenable, it can be seen that the simplicial volume of a $\mathbb Q$-rank $1$ locally symmetric space covered by $\mathrm{SL}_3 \mathbb R / \rso(3)$ is equal to its Lipschitz simplicial volume \cite{LS09-1, BKK, KK12, Michelle} and thus is positive \cite{LS09}.

\begin{theorem}\label{irred}
The simplicial volume of a $\mathbb Q$-rank $1$ locally symmetric space covered by an irreducible symmetric space of noncompact type which is neither $\mathrm{SL}_4 \mathbb R / \rso(4)$ nor $\mathrm{Sp}_4 \mathbb R /\mathrm U(2)$ is positive.
\end{theorem}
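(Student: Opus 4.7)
The plan is a case analysis on the isometry type of $X$, using Theorem \ref{thm:Qrank1} as the main engine and three classical inputs from the literature for the remaining exceptional cases. Because $X$ is assumed irreducible of noncompact type, the hypothesis ``no direct factor isometric to $Y$'' appearing in Theorem \ref{thm:Qrank1} collapses to ``$X$ is not isometric to $Y$''. Consequently Theorem \ref{thm:Qrank1} directly gives $\|M\|>0$ whenever $X$ is not isometric to one of the six spaces $\mathbb R$, $\mathbb H^2$, $\mathbb H^3$, $\mathrm{SL}_3\mathbb R/\rso(3)$, $\mathrm{SL}_4\mathbb R/\rso(4)$, $\mathrm{Sp}_4\mathbb R/\ru(2)$. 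The factor $\mathbb R$ is ruled out by the noncompact-type assumption on $X$, while $\mathrm{SL}_4\mathbb R/\rso(4)$ and $\mathrm{Sp}_4\mathbb R/\ru(2)$ are explicitly excluded from the statement. Hence only the three leftover cases $X \in \{\mathbb H^2,\mathbb H^3, \mathrm{SL}_3\mathbb R/\rso(3)\}$ need separate treatment.

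For $X=\mathbb H^2$ or $\mathbb H^3$, a $\mathbb Q$-rank $1$ locally symmetric space $M$ covered by $X$ is a noncompact finite-volume real hyperbolic surface or $3$-manifold, and Thurston's classical result \cite{Th} immediately gives $\|M\|>0$. For $X = \mathrm{SL}_3\mathbb R/\rso(3)$, the key observation is that in the $\mathbb Q$-rank $1$ setting the minimal rational parabolic subgroup of $\mathrm{SL}_3\mathbb R$ is amenable. Under this amenability hypothesis the proper straightening/transfer machinery of \cite{LS09-1,BKK,KK12,Michelle} identifies the ordinary simplicial volume with the Lipschitz simplicial volume $\|M\|_{\mathrm{Lip}}$; combined with the positivity $\|M\|_{\mathrm{Lip}}>0$ proved by L\"oh--Sauer \cite{LS09}, this yields $\|M\|>0$ in the last remaining case.

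There is no substantial obstacle: the theorem is really a bookkeeping corollary assembling Theorem \ref{thm:Qrank1}, Thurston's rank-one result \cite{Th}, the amenable-parabolic transfer to Lipschitz chains of \cite{LS09-1,BKK,KK12,Michelle}, and the positivity of the Lipschitz simplicial volume from \cite{LS09}. The only ingredient that may deserve a brief verification is the amenability of the minimal $\mathbb Q$-parabolic in the $\mathbb Q$-rank $1$ setting for $\mathrm{SL}_3\mathbb R$ (which one reads off from its Levi component); once this is recorded, the cited results apply verbatim.
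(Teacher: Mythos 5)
Your proposal is correct and takes essentially the same route as the paper: both reduce to the three leftover cases via Theorem~\ref{thm:Qrank1} (noting irreducibility collapses the direct-factor exclusions) and then invoke the literature for those. The only cosmetic difference is that the paper treats all three cases uniformly by observing that in each the proper rational parabolics are Borel, hence the boundary components of the Borel--Serre compactification have amenable $\pi_1$ and $\|\borelserre,\partial\borelserre\|=\|\Gamma\backslash X\|_\mathrm{lf}=\|\Gamma\backslash X\|_\mathrm{Lip}>0$, whereas you dispatch $\mathbb H^2,\mathbb H^3$ directly via Thurston and reserve the amenable-boundary transfer for $\mathrm{SL}_3\mathbb R/\rso(3)$ — both are valid and in fact the paper's own introduction sketches the argument exactly as you did.
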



Theorem \ref{thm:Qrank1} together with a simple degree argument establishes a kind of Gromov's volume comparison theorem for $\mathbb Q$-rank $1$ locally symmetric spaces.

\begin{theorem}[Degree theorem]\label{thm:degree}
Let $M$ be an $n$-dimensional $\mathbb Q$-rank $1$ locally symmetric space with no local direct factors locally isometric to $\mathbb R$, $\mathbb H^2$, $\mathbb H^3$, $\mathrm{SL}_3 \mathbb R / \rso(3)$, $\mathrm{SL}_4 \mathbb R / \rso(4)$ or $\mathrm{Sp}_4 \mathbb R /\mathrm U(2)$. Then for any complete Riemannian $n$-manifold $N$ of finite volume with Ricci curvature bounded below by $-(n-1)$ and any proper map $f : N\rightarrow M$,  $$\mathrm{deg}(f)\leq C \frac{\mathrm{Vol}(N)}{\mathrm{Vol}(M)},$$ where $C$ depends only on $n$.
\end{theorem}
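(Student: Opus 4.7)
The plan is to sandwich $|\mathrm{deg}(f)|$ between volume quantities using the locally finite simplicial volume $\|\cdot\|$, following the standard three-step ``degree theorem'' package. First, pushing forward under the proper map $f$ gives the purely topological inequality $|\mathrm{deg}(f)|\cdot \|M\| \leq \|N\|$. Second, Gromov's volume comparison under a Ricci lower bound controls $\|N\|$ from above by $\mathrm{Vol}(N)$. Third, a quantitative form of Theorem \ref{thm:Qrank1} controls $\|M\|$ from below by $\mathrm{Vol}(M)$. Combining these three inequalities yields the desired bound.

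Step 1 is standard: properness of $f$ ensures that $f_*$ is defined on locally finite chains and sends the locally finite fundamental class $[N]$ to $\mathrm{deg}(f)\cdot [M]$; since $f_*$ is norm non-increasing on locally finite chains, the inequality follows. Step 2 is Gromov's main inequality from \cite{Gr82}: if $N$ is complete, of finite volume, and has Ricci curvature bounded below by $-(n-1)$, then
$$\|N\| \;\leq\; C_n\cdot \mathrm{Vol}(N)$$
for an explicit constant $C_n$ depending only on $n$.

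For Step 3, Theorem \ref{thm:Qrank1} formally asserts only $\|M\|>0$, but its proof produces the quantitative lower bound $\|M\| \geq c\cdot \mathrm{Vol}(M)$. Indeed, one barycentrically straightens locally finite fundamental cycles of $M$: the top-dimensional volumes of straightened simplices are uniformly bounded by the Connell--Farb estimate of \cite{CF03}, while Theorem \ref{thm:codim1Jac} supplies the codimension-one Jacobian bound needed to control the cuspidal part of the cycle under the $\bq$-rank $1$ hypothesis. The standard Thurston-type argument (compare the volume of $M$ to the total mass of a straightened cycle) then delivers $\|M\|\geq c\cdot \mathrm{Vol}(M)$ with $c$ depending only on the universal cover $X$; since only finitely many such $X$ occur in a fixed dimension $n$, taking the worst constant gives $c=c(n)$. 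Combining the three steps,
$$|\mathrm{deg}(f)| \;\leq\; \frac{\|N\|}{\|M\|} \;\leq\; \frac{C_n\,\mathrm{Vol}(N)}{c(n)\,\mathrm{Vol}(M)} \;=\; C\,\frac{\mathrm{Vol}(N)}{\mathrm{Vol}(M)}.$$
The main obstacle is Step 3: one must verify that the construction in Theorem \ref{thm:Qrank1} produces a locally finite fundamental cycle whose total mass is comparable to $\mathrm{Vol}(M)$, not merely a nonzero such cycle. This reduces to propagating the uniform Jacobian estimate of Theorem \ref{thm:codim1Jac} through the cuspidal straightening near the $\bq$-rank $1$ ends of $M$; once this quantitative strengthening of Theorem \ref{thm:Qrank1} is in hand, the degree theorem follows formally.
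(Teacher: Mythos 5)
Your proposal is correct and follows the same three-step strategy as the paper: the proper degree inequality for locally finite simplicial volume, Gromov's Ricci bound $\|N\|_\mathrm{lf}\leq (n-1)^n n!\cdot\mathrm{Vol}(N)$, and the quantitative lower bound $\mathrm{Vol}(M)\leq C\cdot\|M\|_\mathrm{lf}$ extracted from the proof of Theorem~\ref{thm:Qrank1}, together with the observation that finitely many symmetric spaces of a given dimension allow $C$ to be chosen depending only on $n$. The one point worth tightening in your Step 3 is the phrasing: the paper does not straighten a locally finite cycle of $M$ directly but instead uses the bounded conic cocycle $\Theta_b$ on the Tits compactification (Lemma~\ref{lem:bounded}) to lower-bound $\|\borelserre,\partial\borelserre\|(1)$ and hence $\|M\|_\mathrm{lf}$ via Lemma~\ref{lem:keyinequal} — that is, one lower-bounds the norm of \emph{every} fundamental cycle by pairing with a bounded dual cocycle, rather than exhibiting one cycle of controlled mass.
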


We remark that Connel and Farb proved the degree theorem for general locally symmetric spaces of finite volume in \cite{CF03}, but only by assuming that $f$ is a coarse Lipschitz map. Note that we do not assume the Lipschitz condition on $f$ in Theorem \ref{thm:degree}. In a sense, $\mathbb Q$-rank $1$ locally symmetric spaces are the first class of noncompact higher rank locally symmetric spaces of finite volume for which Gromov's volume comparison theorem holds without a Lipschitz condition.
This is one advantage of the simplicial volume.

Corollary \ref{cor1.2} plays an important role in the proof of Theorem \ref{thm:Qrank1}. In fact this is our first  motivation for the estimation of the codimension $1$ Jacobian of straightened simplices. Roughly speaking, the key idea of the proof is to use the rational Tits building and the Tits compactification of a $\mathbb Q$-rank $1$ locally symmetric space. Let $\Gamma$ be a $\mathbb Q$-rank $1$ lattice in a semisimple Lie group $G$ and $X$ the associated symmetric space. Let $\| \Gamma \backslash X \|_\mathrm{lf}$ denote the simplicial volume of $\Gamma \backslash X$.
As the first step, we observe that
$$\left\| \borelserre,\partial \borelserre \right\|(1) \leq \left\| \Gamma \backslash X \right\|_\mathrm{lf}$$
where $\borelserre$ denotes the Borel--Serre compactification of $\Gamma \backslash X$ and $\| \cdot \|(1)$ is the seminorm on relative homology that was defined by Gromov in \cite[Section 4.1]{Gr82}. Hence in order to show $\| \Gamma \backslash X \|_\mathrm{lf}>0$, it suffices to prove that $$\left\| \borelserre,\partial \borelserre \right\| (1) >0.$$

Note that $\Gamma \backslash X$ is the interior of the compact manifold $\borelserre$.
To each relative fundamental cycle $c$ of $\borelserre$, we associate a fundamental cycle $\hat c$ of the Tits compactification $\tits$ of $\Gamma \backslash X$ with the same norm $\|c\|(1)$.
This is possible only for $\mathbb Q$-rank $1$ locally symmetric spaces, which is the key property distinguishing them from higher $\mathbb Q$-rank locally symmetric spaces. Then we can straighten $\hat c$ by using the barycenter method as Lafont and Schmidt did. Since $\hat c$ has ideal simplices with only one ideal point, we need to extend the straightening procedure to ideal simplices. This can be solved by taking the geodesic cone over the straightened simplices of codimension $1$ with ideal top point.
Here we need a uniform upper bound on the volumes of the straightened simplices of codimension $1$ in order to obtain a uniform bound on the volumes of the straightened simplices occurring in $\hat c$. Then by Thurston's argument, the norm of $\hat c$ will be uniformly bounded below away from $0$. Thus we will finish the proof.

This paper is organized as follows. We will start by recalling how to construct the straightening procedure via the barycenter method. Assuming the validity of Theorem \ref{thm:codim1Jac}, we will give a proof of Theorems \ref{thm:Qrank1}, \ref{irred} and \ref{thm:degree}. Then the rest of the paper will be dedicated to proving Theorems \ref{thm:codim1Jac}, \ref{codim1bounded}, \ref{rank2jacobian} and \ref{rank2thm2}. In particular, for a proof of Theorem \ref{thm:codim1Jac}, we will first give a proof  in the case of $\rsl_{n+1} \mathbb R$ and then the other cases will be treated in the Appendix.

\section{Barycentric straightening}\label{barycentric}

Let $G$ be a noncompact semisimple Lie group and $X$ the associated symmetric space of noncompact type of dimension $n$.
Lafont and Schmidt \cite{LS06} introduced a new straightening $$st_* : S_*(X)\rightarrow S_*(X)$$ associated with a closed locally symmetric space covered by $X$ in proving that the simplicial volume of a closed locally symmetric space is strictly positive.
{This} straightening is called \emph{barycentric straightening}.
In fact, their idea is based on the barycenter method first developed by Besson, Courtois and Gallot \cite{BCG91} in order to prove the minimal entropy rigidity conjecture for rank $1$ locally symmetric spaces and then extended to higher rank symmetric spaces by Connell and Farb \cite{CF03}. We begin with a quick review of the barycentric straightening. The main reference for this is \cite{LS06}.

Recall that a singular $k$-simplex in $X$ is a continuous map $\sigma : \Delta^k \rightarrow X$, where $\Delta^k$ is the standard Euclidean $k$-simplex defined by \begin{center} $\Delta^k= \left\{(a_1,\ldots,a_{k+1}) \in \mathbb R^{k+1} \ \big| \ \sum_{i=1}^{k+1} a_i=1 \text{ and } a_i \geq 0 \right\}.$ \end{center}
For convenience, we use the spherical $k$-simplex $\Delta^k_s$ instead of $\Delta^k$, which is defined by \begin{center} $\Delta^k_s = \left\{(a_1,\ldots,a_{k+1}) \in \mathbb R^{k+1} \ \big| \ \sum_{i=1}^{k+1} a_i^2=1 \text{ and } a_i \geq 0 \right\}$,\end{center} equipped with the Riemannian metric induced from $\mathbb R^{k+1}$. Let $\{e_1,\ldots,e_{k+1}\}$ denote the standard basis vectors for $\mathbb R^{k+1}$.
Given a lattice $\Gamma$ in $G$, a singular $k$-simplex $\sigma :\Delta^k_s \rightarrow X$ is straightened to a simplex $st_k(\sigma) : \Delta^k_s \rightarrow X$ by the following procedures: First associate to each singular simplex $\sigma :\Delta^k_s \rightarrow X$ its ordered vertex set $V=(\sigma(e_1),\ldots,\sigma(e_{k+1}) )$. Let $\partial X$ be the visual boundary of $X$ and  $\mathcal M(\partial X)$ be the space of atomless probability measures on $\partial X$.
Albuquerque \cite{Al99} constructed an $h(g_0)$-conformal density $\nu : X \rightarrow \mathcal M(\partial X)$ associated with $\Gamma$ by generalizing the construction of Patterson-Sullivan measure where $h(g_0)$ is the volume entropy of $X$ with respect to the symmetric metric $g_0$ on $X$.
Note that $\nu$ is $\Gamma$-equivariant and each measure $\nu(x)$ is a probability measure fully supported on $G\cdot b^+(\infty)$
where $b$ denotes the barycenter of a positive Weyl chamber (see \cite[Section 2.3]{CF03} for more details) and
$b^+(\infty)$ is the boundary point of $\partial X$ determined by the barycenter $b$.
The Furstenberg boundary $\partial_F X=G/P$ is identified with the orbit $G\cdot b^+(\infty)$ where $P$ is a minimal parabolic subgroup of $G$. Hence we set $\partial_F X := G\cdot b^+(\infty)$ throughout the present paper. For more details, see \cite{Al99}.

By using this conformal density, it is possible to embed the spherical $k$-simplex $\Delta^k_s$ into $\mathcal M(\partial X)$. More precisely, define a map $\widehat V : \Delta^k_s \rightarrow \mathcal M (\partial X)$ by $$\widehat V \left( \sum_{i=1}^{k+1} a_i e_i \right) = \sum_{i=1}^{k+1} a_i^2 \nu(\sigma(e_i)).$$
In the definition of $\widehat V$ above, only the vertex set $V$ of $\sigma$ is involved. Hence the embedding $\widehat V$ depends only on the vertex set of a given singular simplex.

Now we recall the concept of the barycenter of a measure in $\mathcal M(\partial X)$.
Given a measure $\mu \in \mathcal M (\partial X)$, define a function $g_\mu : X \rightarrow \mathbb R$ by
\begin{eqnarray}\label{eqn:g} g_\mu (x) = \int_{\partial X} B(p,x,\theta )d\mu(\theta) \end{eqnarray}
where $B(p,x,\theta)$ is the Busemann function on $X$. For points $p, x \in X$ and $\theta \in \partial X$, the Busemann function $B:X \times X \times \partial X \rightarrow \mathbb R$ is defined by
$$B(p,x,\theta)=\lim_{t \rightarrow \infty} (d(x, l_\theta (t))-t),$$
where $l_\theta$ is the unique geodesic ray starting at $p$ with endpoint $\theta$.
The \emph{barycenter} $bar(\mu)$ of $\mu$ is defined to be the unique point of $X$ where $g_\mu$ is minimized. From the properties of the Busemann function, it can be easily seen that $g_\mu$ changes by an additive constant when one changes the base point $p$. Thus the barycenter is independent of the choice of basepoints.
The barycenter is not always defined for all measures of $\mathcal M (\partial X)$. However, Connell and Farb \cite{CF03} showed that the barycenter is well defined for a finite weighted sum of Patterson-Sullivan measures on $\partial_F X$. Since all measures in the image of $\widehat V$ are such measures, the following procedures make sense.
$$ \xymatrixcolsep{2pc}\xymatrix{
\Delta^k_s \ar[r]^-{\widehat V} &
\mathcal M (\partial X) \ar[r]^-{bar} & X.
}$$

\begin{definition}[Barycentric straightening]
Given a singular $k$-simplex $\sigma \in C^0(\Delta^k_s, X)$, with corresponding vertex set $V$, define $st_k(\sigma) \in C^0(\Delta^k_s, X)$ by $st_k(\sigma)(\delta)=bar(\widehat V(\delta))$ for $\delta \in \Delta^k_s$.
\end{definition}

A simplex of the image of $st_k$ is called a \emph{straight simplex} in $X$. For a $k$-simplex $\sigma$, note that $$st_k(\sigma)(e_i)=bar(\nu(\sigma(e_i)))=\sigma(e_i)$$ for all $i=1,\ldots,k+1$. In other words, the vertex set remains fixed pointwise under the barycentric straightening.
Indeed $st_k(\sigma)$ depends only on the vertex set $V$ and hence we use the notation $st_V(\delta):=st_k(\sigma)(\delta)$ for convenience.
Lafont and Schmidt \cite{LS06} proved that if $X$ is a symmetric space of noncompact type with no direct factors isometric to $\mathbb H^2$ or $\mathrm{SL}_3\mathbb R / \rso(3)$, the collection of maps $st_* : S_*(X)\rightarrow S_*(X)$ satisfies the following properties:
\begin{itemize}
\item[(a)] the maps $st_k$ are $\Gamma$-equivariant,
\item[(b)] the maps $st_k$ induce a chain map $st_* : C_*(X,\mathbb R) \rightarrow C_*(X,\mathbb R)$ which is $\Gamma$-equivariantly chain homotopic to the identity,
\item[(c)] the image of $st_n$ lies in $C^1(\Delta^n,X)$, i.e., straightened top-dimensional simplices are $C^1$,
\item[(d)] there exists a constant $C>0$, depending only on $X$ and the chosen Riemannian metric on $\Delta^n$, such that for any $\sigma \in S_n(X)$, with corresponding straightened simplex $st_n(\sigma):\Delta^n \rightarrow X$, there is a uniform upper bound on the Jacobian of $st_n(\sigma)$: $$|\mathrm{Jac}(st_n(\sigma))(\delta)|\leq C,$$ where $\delta \in \Delta^n$ is arbitrary and the Jacobian is computed relative to the fixed Riemannian metric on $\Delta^n$.
\end{itemize}

By using the barycentric straightening, Lafont and Schmidt \cite{LS06}  proved that the simplicial volume of any closed locally symmetric space is strictly positive. The key steps in the proof were to verify properties (c) and (d). In fact, properties (c) and (d) directly follow from the detailed analysis in the paper of Connell and Farb \cite[Section 4]{CF03}, which is used to obtain a uniform bound for the Jacobian of the natural map.

\begin{theorem}[Connell-Farb]\label{thm:connell-farb}
Let $X$ be a symmetric space of noncompact type with no direct factors isometric to $\mathbb H^2$ or $\rsl_3 \mathbb R/\rso(3)$. Let $\mu \in \mathcal M (\partial X)$ be a probability measure fully supported on $\partial_F X$ and let $x\in X$. Consider the endomorphisms $K_x(\mu)$ and $H_x(\mu)$ defined on $T_xX$ by
$$\langle K_x(v),v \rangle =\int_{\partial_F X} DdB_{(x,\theta)}(v,v) \ d \mu(\theta)$$ and
$$\langle H_x(v),v \rangle=\int_{\partial_F X} dB^2_{(x,\theta)}(v) \ d\mu(\theta).$$
Then $\det K_x(\mu)>0$ and there is a positive constant $C>0$ depending only on $X$ such that
$$\mathrm{Jac}_x(\mu):=\frac{\det (H_x(\mu))}{\det^2 (K_x(\mu))}\leq C.$$
Furthermore, the constant $C$ is explicitly computable.
\end{theorem}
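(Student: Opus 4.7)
The plan is to follow the strategy of Besson--Courtois--Gallot from the rank-one setting, replacing the negatively curved geometry by the structure theory of the symmetric space $X = G/K$. By $G$-invariance one may fix a basepoint $x\in X$. For each $\theta \in \partial_F X$, let $e_\theta \in T_xX$ be the unit tangent to the geodesic ray from $x$ to $\theta$. The standard formula $dB_{(x,\theta)}(v) = -\langle v, e_\theta\rangle$ gives
\[
H_x(\mu) = \int_{\partial_F X} e_\theta \otimes e_\theta\, d\mu(\theta),
\]
a convex combination of rank-one projections with $\mathrm{tr}\, H_x(\mu) = 1$. Writing $S_\theta = DdB_{(\cdot,\theta)}(x)$ for the shape operator of the horosphere through $x$ centered at $\theta$, one also has $K_x(\mu) = \int S_\theta\, d\mu(\theta)$. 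Both operators are symmetric and positive semidefinite, and the ratio $\det H_x/\det^2 K_x$ is precisely what appears when one differentiates the defining equation $\int dB_{(\cdot,\theta)}\,d\mu = 0$ at the barycenter to compute, via the implicit function theorem, the Jacobian of the barycenter map.

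To analyze $S_\theta$, I would invoke the Iwasawa decomposition adapted to a maximal flat tangent to $\theta$. The restricted-root decomposition gives $T_xX = \mathfrak a_\theta \oplus \bigoplus_\alpha \mathfrak p_\alpha(\theta)$, and the classical formula
\[
S_\theta|_{\mathfrak p_\alpha(\theta)} = \alpha(e_\theta)\,\mathrm{Id}, \qquad S_\theta|_{\mathfrak a_\theta} = 0.
\]
Since $\theta$ is a regular point at infinity (an element of $G\cdot b^+(\infty)$), every positive restricted root pairs strictly positively with $e_\theta$, and the nonzero spectrum of $S_\theta$ is bounded below by a constant depending only on the root system of $X$. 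In particular $\mathrm{tr}\, K_x(\mu) = h(g_0)$ uniformly in $\mu$, since $\mathrm{tr}\, S_\theta = \sum_\alpha m_\alpha \alpha(e_\theta) = h(g_0)$ for every $\theta \in K\cdot b^+(\infty)$ by $K$-conjugacy. The positivity $\det K_x(\mu) > 0$ then follows from full support of $\mu$: for any unit $v \in T_xX$, the set $\{\theta \in \partial_F X : v \in \mathfrak a_\theta\}$ is a proper subvariety of $\partial_F X = K/M$, hence has $\mu$-measure zero, so $\langle K_x(\mu)v, v\rangle > 0$.

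The core of the proof is the matrix-level estimate $\det H_x/\det^2 K_x \leq C$. Since $\det H_x \leq (\mathrm{tr}\, H_x/n)^n = n^{-n}$ by AM--GM applied to the positive eigenvalues of $H_x$, it suffices to bound $\det K_x$ below uniformly in $\mu$. Working in an orthonormal basis of $T_xX$ adapted to the $K$-action on averaged root spaces, one decomposes $K_x(\mu)$ into positive contributions $\int \alpha(e_\theta)\,\mathrm{Pr}_{\mathfrak p_\alpha(\theta)}\,d\mu(\theta)$ indexed by positive restricted roots $\alpha$, and combines these with a BCG-style matrix inequality relating $H_x$ and $K_x$ through the joint spectral decomposition to produce the explicit bound $C$. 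The main obstacle, I expect, is exactly this uniform lower bound on $\det K_x(\mu)$: when $\mu$ concentrates near measures whose supports share a maximal flat direction, several eigenvalues of $K_x(\mu)$ can collapse simultaneously, and one must use the full $K$-orbit structure of $\partial_F X$ together with the root-multiplicity data of $X$ to rule out such degeneration quantitatively. The excluded spaces $\mathbb H^2$ and $\rsl_3\mathbb R/\rso(3)$ are precisely those whose restricted root systems are too thin --- a single root for $\mathbb H^2$, and a rank-two root system with all multiplicities equal to one for $\rsl_3\mathbb R/\rso(3)$ --- for the averaging to dominate the potential degeneracy, which is why the bound holds only for the remaining $X$, with $C$ explicitly computable from the root system.
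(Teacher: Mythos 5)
Your reductions at the beginning are fine --- the identity $dB_{(x,\theta)}(v)=-\langle v,e_\theta\rangle$, so $H_x(\mu)=\int e_\theta\otimes e_\theta\,d\mu$ with $\operatorname{tr}H_x=1$, the expression $K_x(\mu)=\int S_\theta\,d\mu$ with $S_\theta$ vanishing on the flat $\mathfrak a_\theta$ and acting by $\alpha(e_\theta)\operatorname{Id}$ on each $\mathfrak p_\alpha(\theta)$, and the constancy of $\operatorname{tr}K_x(\mu)=h(g_0)$ along $\partial_F X=K\cdot b^+(\infty)$. This is exactly the starting point of Connell and Farb's argument, which the paper reproduces in its Section 7: there one writes $K_x$ and $H_x$ in the common basis conjugated by $O_\theta\in K$ and introduces the auxiliary operator $Q_x$ whose trace is $n-r$.

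The flaw is the pivot ``it suffices to bound $\det K_x$ below uniformly in $\mu$.'' No such uniform lower bound exists, even for atomless measures of full support. Take $\mu$ to put mass $1-\epsilon$ in a small neighborhood of a single $\theta_0\in\partial_F X$ and spread the remaining $\epsilon$ over the rest; then the $r$ eigenvalues of $K_x(\mu)$ corresponding to the flat directions $\mathfrak a_{\theta_0}$ shrink like $\epsilon$, so $\det K_x(\mu)\to 0$. Already in $\mathbb H^3$ one computes $\det K_x\approx\epsilon$ while $\det H_x\approx\epsilon^2$: the boundedness of $\det H_x/(\det K_x)^2$ comes from a \emph{cancellation} between the two determinants, not from a lower bound on the denominator. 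Your one-line inequality $\det H_x\le n^{-n}$ therefore captures none of the hard content; the numerator must be shown to degenerate at least as fast as the square of the denominator. The last paragraph of your write-up gestures at this (``joint spectral decomposition,'' ``BCG-style matrix inequality''), but does not supply the mechanism.

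The mechanism is the Weak Eigenvalue Matching Theorem (Connell--Farb, recalled as Theorem~7.2 in the paper). One first observes via the trace of $Q_x$ that at most $r=\operatorname{rank}(X)$ eigenvalues of $K_x$ can be small. To each small eigenvector $v_i$ of $K_x$ (whose span is nearly flat) one matches \emph{two} vectors $v_i',v_i''$ forming an almost-orthonormal $2k$-frame with $\angle(hv_i',\mathcal F^\perp)\le C\,\angle(hv_i,\mathcal F)$ for all $h\in K$; this converts the concavity of $\sin^2$ into the pointwise bound $\langle H_x v_i',v_i'\rangle\le (C^2/D)\langle K_x v_i,v_i\rangle$, and similarly for $v_i''$. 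Multiplying over $i$, each small eigenvalue of $K_x$ appears squared on the left against a small eigenvalue of $H_x$ on the right, which is exactly what kills the quotient. The existence of such a matching is a combinatorial statement about the restricted root system, and it is precisely here --- not in a lower bound for $\det K_x$ --- that the excluded spaces $\mathbb H^2$ and $\mathrm{SL}_3\mathbb R/\mathrm{SO}(3)$ fail, because their root data do not supply enough orthogonal directions in $(\operatorname{Stab}_K(v)\cdot\mathcal F)^\perp$. So your intuition about ``thin'' root systems is pointing in the right direction, but the proposal as written is missing the eigenvalue-matching lemma that carries the whole proof.
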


\begin{remark}
The $\Gamma$-equivariances in properties (a) and (b) come from the $\Gamma$-equivariance of the conformal density $\nu : X \rightarrow \mathcal M (\partial X)$. From \cite[Proposition 7.5]{Al99}, the conformal density $\nu$ is actually $G$-equivariant.
Hence, the barycentric straightening $st_* : C_*(X,\mathbb R) \rightarrow C_*(X,\mathbb R)$ is actually $G$-equivariant and $G$-equivariantly chain homotopic to the identity.
\end{remark}

It follows from property (d) that the volumes of straight $n$-simplices are uniformly bounded from above. The question has been naturally raised as to whether the $k$-dimensional volumes of straight $k$-simplices are uniformly bounded from above or not for $k < n$.
In fact this question is related to the surjectivity of the comparison map in  bounded cohomology theory as follows:
Recall that the continuous cohomology $H^*_c(G,\mathbb R)$ of $G$ is the cohomology of the complex $C^*_c(G,\mathbb R)^G$ endowed with the homogeneous coboundary operator, where  $$C^k_c(G,\mathbb{R})=\{ f : G^{k+1} \rightarrow \mathbb{R}\text{ }|\text{ }f \text{ is continuous} \},$$
and $C^k_c(G,\mathbb R)^G$ denotes the subspace of $G$-invariant cochains. The action of $G$ on $C^k_c(G,\mathbb R)$ is given by $$(g\cdot f)(g_0,\ldots,g_k)=f(g^{-1}g_0,\ldots,g^{-1}g_k).$$
According to the van Est isomorphism \cite[Proposition IX.5.5]{BW}, for a connected semisimple Lie group $G$ with finite center, its continuous cohomology $H^*_c(G,\mathbb{R})$ is isomorphic to the set of $G$-invariant differential forms on the associated symmetric space $X$. In particular,
the continuous cohomology of $G$ in top degree is generated by the $G$-invariant volume form on $X$.

For a cochain $f: G^{k+1} \rightarrow \mathbb{R}$, define its sup norm by
$$\|f\|_\infty = \sup \{ |f(g_0,\ldots,g_k)|\text{ }|\text{ } (g_0,\ldots,g_k)\in G^{k+1}\}.$$
The sup norm turns $C^*_c(G,\mathbb{R})$ into normed real vector spaces. The continuous bounded cohomology $H^*_{c,b}(G,\mathbb{R})$ of $G$ is defined as the cohomology of the subcocomplex $C^*_{c,b}(G,\mathbb{R})^G$ of
$G$-invariant continuous bounded cochains in $C^*_c(G,\mathbb{R})^G$.
The inclusion of $C^*_{c,b}(G,\mathbb{R})^G \subset C^*_c(G,\mathbb{R})^G$ induces a comparison map $ H^*_{c,b}(G,\mathbb{R}) \rightarrow H^*_c(G,\mathbb{R})$.

One of the major problems in the theory of continuous bounded cohomology is the question of whether the comparison map is an isomorphism for any connected semisimple Lie group \cite[Conjecture A]{Mo06}. In particular Dupont \cite[Remark 3]{Du78} conjectured that the comparison map is surjective as mentioned in the introduction.
Existing proofs for the surjectivity fall into two classes, namely, explicit methods and indirect methods. Explicit methods construct explicit bounded cocycles which are obtained by integrating invariant simplices in the corresponding symmetric space. The surjectivity of the comparison map for rank $1$ simple Lie groups can be obtained by this method.
Hartnick and Ott \cite{HO11} used an indirect method, employing Gromov's theorem \cite{Gr82} on the boundedness of the primary characteristic classes of flat bundles, to show that the comparison map is surjective if each of the simple factors of $G$ is either Hermitian or a Lie group presented in \cite[Thereom 1.5]{HO11}.

Once it is proved that the volumes of straight $k$-simplices are uniformly bounded from above, it can be proved that the comparison map $H^k_{c,b}(G,\mathbb R) \rightarrow H^k_c(G,\mathbb R)$ is surjective by using the explicit method as above.
Assuming Theorem \ref{thm:codim1Jac} at this moment, we can give a proof of Theorem \ref{codim1bounded} as follows.

\begin{proof}[Proof of Theorem \ref{codim1bounded}]
First, recall the de Rham isomorphism between de Rham cohomology with compact support and singular cohomology with compact support, which is defined at the cochain level by
$$\Psi : \Omega^*_{cpt}(M,\mathbb R) \rightarrow C^*_{cpt}(M,\mathbb R), \ \Psi(\omega) (\sigma)= \int_{\sigma} \omega$$
where $\omega$ is a $k$-form on $M$ with compact support and $\sigma$ is a singular $k$-simplex in $M$.
In addition, we define another cocycle $\Psi_b$ by $$\Psi_b(\omega) (\sigma)= \Psi(\omega)(st_k(\sigma)).$$
Then since $st_* : S_*(M) \rightarrow S_*(M)$ is chain homotopic to the identity, the cocycle $\Psi_b(\omega)$ represents the same cohomology class as $\Psi(\omega)$.
Furthermore since $\omega$ has compact support and the volume of straightened $(\dim(X)-1)$-simplex is uniformly bounded, it is not difficult to see that $\Psi_b(\omega)$ is actually bounded if $\omega$ is a differential form of degree $\dim(X)-1$. This completes the proof.
\end{proof}

\section{Seminorms on relative homology}

This section is devoted to proving Theorem \ref{thm:Qrank1} assuming the validity of Theorem \ref{thm:codim1Jac}.
As seen in the previous section, Theorem \ref{thm:codim1Jac} implies Theorem \ref{codim1bounded}.
Let $M$ be an oriented connected compact $n$-manifold with boundary. Gromov introduced a one parameter family of seminorms on the real singular relative homology $H_*(M,\partial M)$ in \cite[Section 4.1]{Gr82}. Let $\| \cdot \|$ denote the obvious $\ell^1$-norm on the real singular chain complex $C_*(M)$ of $M$. For every $\theta \geq 0$, Gromov defined a norm $\| \cdot \|(\theta)$ on $C_*(M)$ by putting $$\|c\|(\theta)=\|c\|+\theta \|\partial c \|.$$
Then, the norm $\| c' \|(\theta)$ of $c'\in C_*(M,\partial M)$ is defined by $$\|c' \|(\theta)=\inf_{c} \|c \|(\theta)$$ where the infimum is taken over all singular chains in $C_*(M)$ representing $c'$. This norm induces a seminorm on $H_*(M,\partial M)$, which is still denoted by $\|\cdot \|(\theta)$. From the definition, it immediately follows that $$ \| \cdot \| (\theta_1) \geq \| \cdot \| (\theta_2) \text{ for } \theta_1 \geq \theta_2.$$

The usual $\ell^1$-seminorm on $H_*(M,\partial M)$ is equal to $\| \cdot \|(0)$. Hence the relative simplicial volume $\|M,\partial M\|$ of $(M,\partial M)$ is equal to $\|[M,\partial M]\|(0)$ where $[M,\partial M] \in H_n(M,\partial M)$ denotes the relative fundamental class of $(M,\partial M)$. In particular, if the fundamental class of each connected component of $\partial M$ is amenable, all the seminorms $\| \cdot \| (\theta)$ are equal by Gromov's equivalence theorem, \cite[Section 4.1]{Gr82}, cf. also \cite{Michelle}, which follows.
\begin{theorem}[Gromov's equivalence theorem]
Let $(Y,B)$ be a pair of topological spaces. If the fundamental groups of the path-connected components of $B$ are amenable, then the seminorms $\| \cdot \|(\theta)$ on $H_i(Y,B)$, for $i \geq 2$, are equal for every $\theta \in [0,\infty]$.
\end{theorem}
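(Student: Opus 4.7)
Since $\|c\|(\theta_1) \leq \|c\|(\theta_2)$ at the chain level for $\theta_1 \leq \theta_2$, the induced seminorms on $H_i(Y,B)$ are monotone in $\theta$, so it suffices to prove $\|\alpha\|(\infty) \leq \|\alpha\|(0)$ for every $\alpha \in H_i(Y,B)$ with $i\geq 2$. Unraveling the definitions, this amounts to the following claim: given $\epsilon>0$ and $\eta>0$, one can produce a chain $c\in C_i(Y)$ whose image in $C_i(Y,B)$ is a relative cycle representing $\alpha$, with $\|c\|\leq \|\alpha\|(0)+\epsilon$ \emph{and} $\|\partial c\|<\eta$. The natural starting point is a near-optimal representative $c_0\in C_i(Y)$ with $\|c_0\|\leq \|\alpha\|(0)+\epsilon/2$; its boundary $\partial c_0$ is an absolute cycle in $C_{i-1}(B)$ representing the connecting class $\partial\alpha\in H_{i-1}(B)$. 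Because every path-component $B_j$ of $B$ has amenable fundamental group and $i-1\geq 1$, Gromov's vanishing theorem gives $\|\partial\alpha\|_1=0$, so there exist cycles $b_n\in C_{i-1}(B)$ homologous to $\partial c_0$ with $\|b_n\|\to 0$ and chains $e_n\in C_i(B)$ with $\partial c_0-b_n=\partial e_n$.

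The naive substitution $c_0\mapsto c_0-e_n$ makes $\|\partial(c_0-e_n)\|=\|b_n\|$ arbitrarily small and keeps the relative class $\alpha$, but it does not yet bound $\|c_0-e_n\|$ because the filling $e_n$ has a priori uncontrolled norm. The crux of the proof is therefore an \emph{isoperimetric} refinement of the vanishing theorem: one must exhibit $e_n$ whose norm tends to $0$ together with $\|b_n\|$. This is where amenability is used in earnest, and it is the content of Gromov's multicomplex construction in \cite[\S 3--4]{Gr82}: one replaces $B_j$ by its minimal aspherical multicomplex, whose chain complex computes the same homology and on which $\pi_1(B_j)$ acts; an invariant mean on the amenable group $\pi_1(B_j)$ then averages the obvious (unbounded) filling to produce a $\pi_1$-equivariant chain-level contracting homotopy in degrees $\geq 1$, under which the norm of the output is controlled by the norm of the input cycle. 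The restriction $i\geq 2$ enters precisely because the homotopy is available only in degree $i-1\geq 1$.

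Once the controlled filling with $\|e_n\|\to 0$ is available, setting $c:=c_0-e_n$ for $n$ large gives $\|c\|\leq \|c_0\|+\|e_n\|<\|\alpha\|(0)+\epsilon$ and $\|\partial c\|=\|b_n\|<\eta$, completing the proof. The main obstacle is precisely this controlled-filling step: knowing only that $\|\cdot\|_1$ vanishes on $H_{i-1}(B)$ is not enough, because the vanishing does not come with an a priori norm bound on the primitive $e_n$. An equivalent route that circumvents the multicomplex machinery is to dualize: the seminorm $\|\alpha\|(\theta)$ is the dual norm under the Kronecker pairing to the seminorm on $H^i(Y,B;\mathbb R)$ obtained by infimizing $\|f\|_\infty+\theta^{-1}\|g\|_\infty$ over representatives $(f,g)$ with $f\in C^i(Y)$, $g\in C^{i-1}(B)$, $\delta f=0$, and $f|_B=\delta g$. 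Ivanov's theorem $H^*_b(B;\mathbb R)=0$ for amenable $\pi_1(B_j)$, applied through the long exact sequence in bounded cohomology of the pair $(Y,B)$, shows that any bounded relative cohomology class admits representatives with $\|g\|_\infty$ arbitrarily small; by duality the seminorms $\|\cdot\|(\theta)$ on $H_i(Y,B)$ are then forced to coincide for all $\theta\in[0,\infty]$.
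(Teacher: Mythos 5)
The paper does not prove Gromov's equivalence theorem; it simply states it and refers to \cite[Section 4.1]{Gr82} and \cite{Michelle}, so there is no in-paper argument to compare against. Evaluating your sketch on its own terms, the primary (primal) argument has a genuine gap that cannot be repaired in the way you propose.

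The gap is the claim that one can produce fillings $e_n\in C_i(B)$ of $\partial c_0-b_n$ with $\|e_n\|\to 0$. This is impossible whenever $\partial c_0\neq 0$ (and if $\partial c_0=0$ there is nothing to prove). Indeed, for any chain $e$ one has the trivial bound $\|\partial e\|\leq (i+1)\|e\|$, hence $\|e_n\|\geq \|\partial e_n\|/(i+1)=\|\partial c_0-b_n\|/(i+1)$. Since $\|b_n\|\to 0$, the reverse triangle inequality gives $\|\partial c_0-b_n\|\to\|\partial c_0\|$, so $\liminf_n\|e_n\|\geq\|\partial c_0\|/(i+1)>0$. This is a universal obstruction, not a failure of control: no isoperimetric refinement, uniform boundary condition, or amenability-based averaging can defeat it, because the lower bound has nothing to do with the group. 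The multicomplex machinery you invoke does give a bounded chain contraction with $\|h(z)\|\leq K\|z\|$, but that yields $\|e_n\|\leq K\|\partial c_0 - b_n\|$, which stays bounded away from zero rather than tending to it. So the "naive substitution $c_0\mapsto c_0-e_n$" strategy cannot work as stated; the actual proof must modify $c_0$ in a way that does not reduce to subtracting a small filling of $\partial c_0$ inside $B$.

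Your closing alternative is the right route and is essentially the argument used in \cite{Michelle}: identify the $\|\cdot\|(\theta)$-seminorms on $H_i(Y,B)$ as dual, via the Kronecker pairing, to a $\theta$-parametrized family of $\ell^\infty$-seminorms on relative bounded cohomology defined through the mapping cone $C^{i-1}_b(B)\oplus C^i_b(Y)$; use amenability of $\pi_1(B_j)$ to conclude $H^*_b(B;\mathbb R)=0$ (Gromov, Ivanov); then the long exact sequence for the pair forces $H^i_b(Y,B)\to H^i_b(Y)$ to be an (isometric) isomorphism in degrees $\geq 2$, so the $g$-component of a representative can be taken with $\|g\|_\infty$ arbitrarily small, which collapses the $\theta$-dependence; dualizing recovers the equality of the $\|\cdot\|(\theta)$ on homology. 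This part of your write-up is correct in outline (modulo whether the norm on the mapping cone should be a sum or a max, which does not affect the conclusion), but it is sketched in a few lines while the longer, featured argument is the one that fails. The honest structure of a proof would lead with the duality argument, or with Gromov's primal construction, rather than with a filling step that provably cannot converge.
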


\subsection{Disjoint mapping cone}\label{sec:cone}

We describe the topological meaning of the seminorm $\| \cdot \|(1)$, which will play a key role in proving that the simplicial volumes of most $\mathbb Q$-rank $1$ locally symmetric spaces are strictly positive.

Let $\partial_1 M,\ldots,\partial_s M$ be the connected components of $\partial M$. Recall that for each $i$, the cone of $\partial_i M$, denoted by $Cone(\partial_i M)$, is defined as the quotient space $(\partial_i M \times I) / (\partial_i M \times \{1\})$ of the product of $\partial_i M$ with the unit interval $I = [0, 1]$.
We define the \emph{disjoint mapping cone} of $(M,\partial M)$, denoted by $Dcone(M,\partial M)$, as the space obtained by gluing the $\coprod_{i=1}^s Cone(\partial_i M)$ to $M$ along $\coprod_{i=1}^s\partial_i M$.
From this viewpoint, it can be easily seen that $$H_*(Dcone(M,\partial M)) \cong H_*(M,\partial M)$$ in degree $* \geq 2$.

We now define $cone(\tau) \in S_k(Dcone(M,\partial M))$ for a given singular simplex $\tau \in S_{k-1}(\partial M)$ in the following way. We may assume that $\tau : \Delta^{k-1} \rightarrow \partial_i M$ for some $i$. Then the continuous map $\tau$ can be extended naturally to a continuous map $cone(\tau):Cone(\Delta^{k-1}) \rightarrow Cone(\partial_i M)$ by setting $$cone(\tau)(\delta,t)=(\tau(\delta),t) \text{ for all } \delta \in \Delta^{k-1} \text{ and }t \in [0,1].$$ Noting that $Cone(\Delta^{k-1})$ is homeomorphic to $\Delta^{k}$, one may define a singular $k$-simplex $cone(\tau): \Delta^k \rightarrow Cone(\partial_i M)$. In this definition of $cone(\tau)$, one has used an identification $Cone(\Delta^{k-1}) \cong \Delta^k$. In order that the cone operator $cone(\cdot)$ is compatible with the boundary operator $\partial_*$, we define the identifications $Cone(\Delta^{k-1}) \cong \Delta^k$ as follows: First, identify $\Delta^{k-1} \times \{0\}$ with $\Delta^{k-1}$ by the projection onto the first factor. Then for each $p \in \Delta^{k-1}$, identify the line segment $p \times [0,1]$ in $\Delta^{k-1}\times [0,1]$ with the line segment connecting $p$ and $e_{k+1}=(0,\ldots,0,1)\in \mathbb R^{k+1}$ linearly. This identification gives rise to a continuous map $j_k : \Delta^{k-1} \times [0,1] \rightarrow \Delta^k$ and moreover $j_k$ maps all points of $\Delta^{k-1} \times \{1\}$ to $e_{k+1}$. Hence $j_k$ defines a homeomorphism $j_k : Cone(\Delta^{k-1}) \rightarrow \Delta^k$ for each $k\in \mathbb N$. It is not difficult to see that these identifications are compatible with the boundary operator on $\Delta^*$, that is,
\begin{eqnarray}\label{eqn:coneboundary} \partial_k cone(\tau) = cone (\partial_{k-1} \tau) +(-1)^{k}\tau.\end{eqnarray}

We define the subspace $Cone(S_{k-1}(\partial M)) \subset S_k(Dcone(M,\partial M))$ by
 $$Cone(S_{k-1}(\partial M)) =\{ cone(\tau) \ | \ \tau \in S_{k-1}(\partial M) \}$$
and let $Cone(C_{k-1}(\partial M))$ be the real vector space generated by the basis of singular simplices in $Cone(S_{k-1}(\partial M))$.
We set $$C^{conic}_*(Dcone(M,\partial M))=C_*(M) \oplus Cone(C_{*-1}(\partial M)).$$
Then it is easy to check that $(C^{conic}_*(Dcone(M,\partial M)), \partial_*)$ is a subcomplex of $C_*(Dcone(M,\partial M))$. Denote its homology by $H^{conic}_*(Dcone(M,\partial M))$. Note that $H^{conic}_*(Dcone(M,\partial M))$ is isomorphic to $H_*(Dcone(M,\partial M))$ in degree $*\geq 2$ (see \cite[Lemma 5.2]{KKT} for a proof). Hence there is a fundamental class $[Dcone(M,\partial M)]_{conic}$ in $H_n^{conic}(Dcone(M,\partial M))$.

The usual $\ell^1$-norm is defined on $C^{conic}_*(Dcone(M,\partial M))$ and induces a seminorm on $H^{conic}_*(Dcone(M,\partial M))$. We will use the same notation $\| \cdot \|$ for both norms. For each chain $z \in C^{conic}_*(Dcone(M,\partial M))$, denote by $z_M$ and $z_{cone}$ the projections of $z$ onto $C_*(M)$ and $Cone(C_{*-1}(\partial M))$ respectively.

\begin{lemma}\label{lem:ell1norm}
Let $z$ be a fundamental cycle in $C^{conic}_n(Dcone(M,\partial M))$.
Then, $$\| z \| = \|z_M \|(1).$$
\end{lemma}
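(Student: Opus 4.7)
The plan is to combine two observations: the $\ell^1$-norm on $C^{conic}_n(Dcone(M,\partial M))$ splits along the direct-sum decomposition $C_n(M) \oplus Cone(C_{n-1}(\partial M))$, and the cone-boundary identity (\ref{eqn:coneboundary}) forces the cone part of any cycle to be, up to sign, $\partial z_M$. Combining these two identities produces exactly $\|z_M\| + \|\partial z_M\| = \|z_M\|(1)$.

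First I would write $z = z_M + z_{cone}$ with $z_{cone} = cone(w)$ for a unique $w \in C_{n-1}(\partial M)$, and show that $\|z\| = \|z_M\| + \|w\|$. For this, one observes two facts: (i) the assignment $\tau \mapsto cone(\tau)$ is injective on singular simplices, since $\tau$ is recovered by restricting $cone(\tau)$ to the base face $\Delta^{n-1} \times \{0\}$ under the fixed identification $Cone(\Delta^{n-1}) \cong \Delta^n$; and (ii) the image of any $cone(\tau)$ lies in the open cone $Cone(\partial_i M) \setminus \partial M$, which is disjoint from $M$. Together these say that the basis singular simplices contributing to $z_M$ and to $z_{cone}$ are all pairwise distinct inside $S_n(Dcone(M,\partial M))$, so the $\ell^1$-norm is additive across the splitting and the cone map is $\ell^1$-isometric on $C_{n-1}(\partial M)$.

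Next, applying (\ref{eqn:coneboundary}) to $z_{cone} = cone(w)$ gives $\partial z_{cone} = cone(\partial w) + (-1)^n w$, so $\partial z = 0$ reads
\[
\partial z_M + (-1)^n w + cone(\partial w) \;=\; 0
\]
inside $C^{conic}_{n-1}(Dcone(M,\partial M)) = C_{n-1}(M) \oplus Cone(C_{n-2}(\partial M))$. Viewing $w \in C_{n-1}(\partial M) \subset C_{n-1}(M)$ and splitting into the two summands of this decomposition forces $cone(\partial w) = 0$ on the one side and $\partial z_M = (-1)^{n+1} w$ on the other. In particular $\partial z_M$ automatically lies in $C_{n-1}(\partial M)$, and $\|w\| = \|\partial z_M\|$. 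Substituting into the first step yields $\|z\| = \|z_M\| + \|\partial z_M\| = \|z_M\|(1)$.

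The only genuinely delicate point is the isometry of the cone map, which is where I would be most careful: it relies on the disjointness/injectivity claims in the second paragraph, which in turn depend on the explicit homeomorphism $Cone(\Delta^{n-1}) \cong \Delta^n$ fixed just before the lemma. Everything else is bookkeeping of the direct-sum decomposition and of the cone-boundary identity.
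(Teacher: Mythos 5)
Your proof is correct and takes essentially the same route as the paper's: split the $\ell^1$-norm across the direct sum $C_n(M) \oplus Cone(C_{n-1}(\partial M))$, invoke the cone-boundary identity to identify the cone part as $(-1)^{n+1}cone(\partial z_M)$, and use that the cone map is $\ell^1$-isometric. The only small divergence is that you deduce $\partial z_M \in C_{n-1}(\partial M)$ directly from projecting the cycle condition onto the two summands, whereas the paper gets this from the assumption that $z$ is a fundamental cycle (hence $z_M$ a relative fundamental cycle); your version makes the lemma visibly hold for any conic cycle.
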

\begin{proof}
Since $z$ is a cycle, it follows that $\partial z =\partial z_M + \partial z_{cone}=0$. This implies that $\partial z_{cone}=-\partial z_M$. Noting that $z_M$ is a relative fundamental cycle of $(M,\partial M)$, it follows that $\partial z_M$ is a fundamental cycle of $\partial M$ and so is $\partial z_{cone}$ up to sign. On the other hand, $z_{cone}$ is uniquely determined by its boundary $\partial z_{cone}$, that is, $z_{cone}=(-1)^n cone(\partial z_{cone})$. Hence,  $$\| z_{cone}\|=\|cone(\partial z_{cone}) \| =\| \partial z_{cone}\|.$$
Thus we have that $$z=z_M+(-1)^n cone(\partial z_{cone})=z_M+(-1)^{n+1} cone(\partial z_M),$$
and $$\|z\|=\|z_M\|+\|z_{cone}\|=\|z_M\|+\|\partial z_{cone}\|=\|z_M\|+\|\partial z_M\|=\|z_M\|(1),$$
which completes the proof.
\end{proof}

As a corollary, $\|M,\partial M\|(1)$ can be viewed as the $\ell^1$-seminorm of the fundamental class $[Dcone(M,\partial M)]_{conic}$ on $H^{conic}_n(Dcone(M,\partial M))$.

\begin{corollary}\label{cor:conicnorm}
Let $M$ be an oriented connected compact manifold with boundary. Then $$\| [Dcone(M,\partial M)]_{conic}\| = \| M, \partial M \|(1) \geq \|M,\partial M\| .$$
Furthermore, the equality holds if the fundamental groups of the connected components of $\partial M$ are amenable.
\end{corollary}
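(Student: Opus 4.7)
The plan is to deduce the corollary by combining Lemma \ref{lem:ell1norm} with the definition of the relative seminorm $\|\cdot\|(1)$ and, for the last assertion, Gromov's equivalence theorem. First I would set up a bijective correspondence between fundamental cycles of $Dcone(M,\partial M)$ sitting in the conic subcomplex $C^{conic}_n$ and relative fundamental cycles of $(M,\partial M)$ lifted to $C_n(M)$. In one direction, the proof of Lemma \ref{lem:ell1norm} shows that if $z \in C^{conic}_n(Dcone(M,\partial M))$ is a cycle representing $[Dcone(M,\partial M)]_{conic}$, then $z_M \in C_n(M)$ is a relative fundamental cycle of $(M,\partial M)$ and $z = z_M + (-1)^{n+1}\,cone(\partial z_M)$. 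In the other direction, for any relative fundamental cycle $c \in C_n(M)$, I set
$$\hat c := c + (-1)^{n+1}\,cone(\partial c) \in C^{conic}_n(Dcone(M,\partial M)).$$
Using the boundary formula \eqref{eqn:coneboundary} applied to $\tau = \partial c$ together with $\partial^2 c = 0$, one checks that $\partial \hat c = 0$; under the isomorphism $H^{conic}_n(Dcone(M,\partial M)) \cong H_n(M,\partial M)$ the class $[\hat c]$ corresponds to $[c]$, so $\hat c$ represents $[Dcone(M,\partial M)]_{conic}$. The assignments $z \mapsto z_M$ and $c \mapsto \hat c$ are mutually inverse.

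Next I would invoke Lemma \ref{lem:ell1norm} to obtain the chain-level identity $\|\hat c\| = \|c\|(1)$ for every relative fundamental cycle $c$. Taking the infimum of both sides as $c$ ranges over all relative fundamental cycles of $(M,\partial M)$ -- equivalently, as $z = \hat c$ ranges over all fundamental cycles of the disjoint mapping cone in $C^{conic}_n$ -- converts this into the claimed equality of seminorms
$$\|[Dcone(M,\partial M)]_{conic}\| = \|M,\partial M\|(1).$$
The inequality $\|M,\partial M\|(1) \geq \|M,\partial M\|$ is immediate from the definition of $\|\cdot\|(\theta)$: for every chain $c$ one has $\|c\|(1) = \|c\| + \|\partial c\| \geq \|c\| = \|c\|(0)$, and since the infimum is taken over the same set of representatives, the estimate descends to the induced seminorms on $H_n(M,\partial M)$.

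For the final statement, when each connected component of $\partial M$ has amenable fundamental group, Gromov's equivalence theorem recalled above asserts that the seminorms $\|\cdot\|(\theta)$ on $H_n(M,\partial M)$ coincide for every $\theta \in [0,\infty]$; in particular $\|M,\partial M\|(1) = \|M,\partial M\|(0) = \|M,\partial M\|$, which combined with the first part of the corollary yields equality throughout. The main point requiring care is verifying the sign in the definition of $\hat c$ so that $\partial \hat c$ vanishes; this is a direct computation from \eqref{eqn:coneboundary} and the identity $\partial^2 = 0$, after which the corollary follows essentially by bookkeeping.
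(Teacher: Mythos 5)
Your proof is correct and follows essentially the same route as the paper: the paper also records that the conic fundamental cycles are exactly those of the form $w+(-1)^{n+1}cone(\partial w)$ for a relative fundamental cycle $w$ (this observation is drawn from the proof of Lemma \ref{lem:ell1norm}), then applies Lemma \ref{lem:ell1norm} to match norms, notes the inequality $\|\cdot\|(1)\geq\|\cdot\|(0)$ is immediate, and cites Gromov's equivalence theorem for the amenable case. You merely spell out the sign check $\partial\hat c=0$ and the infimum bookkeeping more explicitly than the paper does.
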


\begin{proof}
It follows from the proof of Lemma \ref{lem:ell1norm} that $z$ is a fundamental cycle in $C^{conic}_n(Dcone(M,\partial M))$ if and only if $z$ is of the form $$z=w+(-1)^{n+1}cone(\partial w)$$ for a relative fundamental cycle $w$ of $(M,\partial M)$. This implies the first equality in the corollary. The second inequality is clear and the sufficient condition for the equality comes from Gromov's equivalence theorem in \cite[Section 4.1]{Gr82}
\end{proof}

\subsection{Simplicial volume of a noncompact manifold}

Let $N$ be a topological space and let $S_k(N)$ be the set of all continuous maps from the standard $k$-simplex $\Delta^k$ to $N$. A subset $A$ of $S_k(N)$ is called \emph{locally finite} if each compact subset of $N$ intersects only finitely many elements of $A$. Let us denote by $S^\mathrm{lf}_k(N)$ the set of all locally finite subsets of $S_k(N)$.

\begin{definition}
Let $N$ be a topological space and let $k\in\mathbb{N}$. The \emph{locally finite chain complex} of $N$ is the chain complex $C^\mathrm{lf}_*(N)$ consisting of the real vector spaces $$ C^{\mathrm{lf}}_k(N)=\left\{ \sum_{\sigma \in A} a_\sigma\cdot\sigma \ \bigg| \ A\in S^{\mathrm{lf}}_k(N) \text{ and }a_\sigma \in \mathbb{R}, {\sigma \in A} \right\}$$
equipped with the boundary operator given by the alternating sums of the $(k-1)$-faces. \emph{The locally finite homology} $H^\mathrm{lf}_*(N)$ of $N$ is the homology of the locally finite chain complex $C^\mathrm{lf}_*(N)$.
\end{definition}

The $\ell^1$-norm $\| \cdot \|$ on the locally finite chain complex of $N$ is defined using the canonical basis of singular simplices. Note that the $\ell^1$-norm of a locally finite chain can be infinite. This $\ell^1$-norm gives rise to a semi-norm on the locally finite homology of $N$. It is well known that any oriented connected manifold $N$ possesses a fundamental class, which is a distinguished generator of the top dimensional locally finite homology $H^{\mathrm{lf}}_n(N;\mathbb{Z})\cong\mathbb{Z}$ with integral coefficients \cite{LS07}. The existence of a fundamental class of a noncompact oriented connected manifold is one advantage of locally finite homology against singular homology.

\begin{definition}
Let $N$ be an oriented connected $n$-dimensional manifold without boundary. Then \emph{the simplicial volume of $N$} is defined as $$\| N\|_\mathrm{lf}=\inf \left\{ \| z \| \ \mid \ z \in C^{\mathrm{lf}}_n(N)\text{ is a fundamental cycle of } N \right\}.$$
\end{definition}

Note that $\| N\|_\mathrm{lf}$ might be infinite. For a tame manifold $N$,  L\"{o}h gives a necessary and sufficient condition for $\| N\|_\mathrm{lf}$ to be finite in terms of $\ell^1$-homology. See \cite[Theorem 6.4]{Loh08} for further details.

\begin{lemma}\label{lem:keyinequal}
Let $M$ be an oriented connected compact manifold with boundary and $\mathring{M}$ denote the interior of $M$. If the simplicial volume of $\mathring{M}$ is finite, then
$$\| M,\partial M \|(1) \leq \|\mathring{M}\|_\mathrm{lf}.$$
\end{lemma}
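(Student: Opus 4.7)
The plan is to show that every locally finite fundamental cycle $z=\sum_i a_i\sigma_i\in C^{\mathrm{lf}}_n(\mathring M)$ with $\|z\|=v<\infty$ yields, for each $\epsilon>0$, a finite relative fundamental cycle $c$ of $(M,\partial M)$ with $\|c\|(1)\le v+(n+1)\epsilon$; letting $\epsilon\to 0$ and then taking the infimum over $z$ will give the inequality. The key idea is to use a continuous retraction of $M$ that collapses a thin collar neighborhood of $\partial M$ onto $\partial M$, so that the simplices of $z$ lying entirely in the collar are pushed into $\partial M$ while the remaining (finitely many) simplices provide a finite chain in $M$ whose boundary, by a cancellation argument, must lie in $\partial M$.

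Fix a collar embedding $\phi:\partial M\times[0,1)\hookrightarrow M$ and set $M_s=M\setminus\phi(\partial M\times[0,s))$ for $s\in(0,1)$. Given $\epsilon>0$, consider $E_s=\{i:\mathrm{image}(\sigma_i)\subset\phi(\partial M\times(0,s))\}$. Since $M_s$ is compact in $\mathring M$, local finiteness implies $\mathbb{N}\setminus E_s$ is finite; since each $\sigma_i$ has compact image in $\mathring M$, the family $\{E_s\}$ decreases to the empty set as $s\to 0$, so countable additivity of $\sum_i|a_i|$ provides some $s\in(0,1)$ with $\|z'\|<\epsilon$, where $z':=\sum_{i\in E_s}a_i\sigma_i$. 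Write $z=z_J+z'$ with $z_J$ the complementary finite chain. Next define a continuous self-map $r:M\to M$ by letting $r$ be the identity on $M_{2s}$, by $r(\phi(x,t))=\phi(x,0)$ for $t\in[0,s]$, and by the linear interpolation $r(\phi(x,t))=\phi(x,2t-2s)$ for $t\in[s,2s]$. Put $c:=r_\sharp(z_J)$.

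Every simplex of $z'$ takes values in $\phi(\partial M\times(0,s))$, so $r\circ\sigma_i$ lands in $\partial M$ for each $i\in E_s$, and $r_\sharp(z')$ is an $\ell^1$-chain supported in $\partial M$. The identity $\partial c=-\partial r_\sharp(z')$, which follows from $\partial z=0$ and $r_\sharp\partial=\partial r_\sharp$, thus equates the finite chain $\partial c$ to an $\ell^1$-chain supported in $\partial M$; a coefficient-by-coefficient comparison forces $\partial c\in C_{n-1}(\partial M)$. Combined with $\|c\|\le\|z_J\|\le v$ and $\|\partial c\|=\|\partial r_\sharp(z')\|\le(n+1)\|z'\|\le(n+1)\epsilon$, this gives $\|c\|(1)\le v+(n+1)\epsilon$. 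To verify that $[c]=[M,\partial M]$, pick a point $p\in\mathring M$ lying outside the closed set $\phi(\partial M\times[0,2s])$: then $r$ is the identity on an open neighborhood of $p$, and no simplex of $z'$ passes through $p$, so the local degree of $c$ at $p$ coincides with that of $z$ there, which equals $1$ because $z$ is a fundamental cycle of $\mathring M$.

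The main subtlety is the interaction between finite and $\ell^1$ chains in the identity $\partial c=-\partial r_\sharp(z')$: one must check that the finite chain $\partial c$ actually lives in $\partial M$ rather than merely coincides there with an $\ell^1$-chain. This is settled by the observation that every singular simplex appearing in $r_\sharp(z')$ has image in $\partial M$, so every face simplex appearing in $\partial r_\sharp(z')$ has image in $\partial M$ as well; by the coefficient identity, the coefficient in $\partial c$ of any singular $(n-1)$-simplex whose image is not in $\partial M$ must vanish.
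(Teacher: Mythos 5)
Your argument is correct and follows the same overall strategy as the paper's: cut the locally finite fundamental cycle down to a finite chain by throwing away the simplices lying in a thin neighbourhood of infinity, push that finite chain into good position, estimate the contribution of the boundary, and pass to the limit. The execution differs in a couple of genuine ways. The paper fixes an exhaustion $(M_k)$ by compact cores, each homeomorphic to $M$ and a deformation retract of $\mathring M$, sets $z_k$ to be the simplices meeting $M_k$, invokes \cite[Theorem 5.4]{LS07} to see that $z_k$ represents the relative class $[M,M-M_k]$, retracts $z_k$ onto $M_k$, and then uses $M_k\cong M$ to translate the bound on $\|M_k,\partial M_k\|(1)$ into one on $\|M,\partial M\|(1)$. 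You instead never leave $(M,\partial M)$: you collapse a thin collar to $\partial M$ by a self-map $r:M\to M$ that is homotopic to the identity and is the identity away from the collar, push forward the finite truncation $z_J$, and then verify directly that the result is a relative fundamental cycle via two explicit checks — the coefficient-by-coefficient comparison of the finite chain $\partial c$ with the $\ell^1$-chain $-\partial r_\sharp(z')$ supported in $\partial M$, and the local-degree computation at an interior point $p$ outside the image of the collar (which works because $r$ fixes a neighbourhood of $p$, $r^{-1}(p)=\{p\}$, and $z'$ misses $p$). This buys you independence both from the cited result of L\"oh and from the non-trivial assertion $M_k\cong M$, at the cost of spelling out the cancellation and local-degree observations, which the paper handles by citation and by the structure of the compact core exhaustion; the norm bookkeeping ($\|c\|\le\|z_J\|\le v$, $\|\partial c\|\le(n+1)\|z'\|\le(n+1)\epsilon$) is identical in both arguments.
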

\begin{proof}
Choose an exhausting sequence $(M_k)_{k\in \mathbb N}$ of compact cores of $\mathring{M}$. Each $M_k$ is homeomorphic to $M$ and moreover, $M_k$ is a deformation retract of $\mathring{M}$. Let $z=\sum_{i=1}^\infty a_i \sigma_i$ be a locally finite fundamental cycle of $\mathring{M}$ with finite $\ell^1$-norm. Set $z_k = \sum_{\mathrm{Im}(\sigma_i) \cap M_k \neq \emptyset} a_i \sigma_i$. Then it is verified in  \cite[Theorem 5.4]{LS07} that $z_k$ is a relative fundamental cycle representing $[M,M-M_k]$. Hence for each retraction $r_k : \mathring{M} \rightarrow M_k$, $(r_k)_*z_k$ is a relative fundamental cycle of $(M_k,\partial M_k)$ and we have
\begin{eqnarray}\label{eqn:A} \| (r_k)_*z_k \| \leq \|z_k\|.\end{eqnarray}
Noting that any simplex occurring in $\partial ((r_k)_*z_k)$ comes from simplices occurring in $\partial \sigma_i$ for $\sigma_i$ with $\mathrm{Im}(\sigma_i) \cap \partial M_k \neq \emptyset$, it follows that
\begin{eqnarray}\label{eqn:B} \| (r_k)_*z_k \|(1) \leq  \| (r_k)_*z_k \| + (n+1)\left\| \sum_{\mathrm{Im}(\sigma_i)\cap \partial M_k \neq \emptyset}a_i \sigma_i \right\|. \end{eqnarray}

By combining inequalities (\ref{eqn:A}) and (\ref{eqn:B}), we have
\begin{eqnarray}\label{eqn:C} \| M_k, \partial M_k \|(1) \leq \| z_k \| + (n+1)\sum_{\mathrm{Im}(\sigma_i)\cap \partial M_k \neq \emptyset} |a_i|.\end{eqnarray}
Since every $M_k$ is homeomorphic to $M$, the value $\| M_k, \partial M_k \|(1)$ is equal to $\| M, \partial M \|(1)$ for every $k \in \mathbb N$. Since the $\ell^1$-norm of $z$ is finite and $(M_k)_{k\in \mathbb N}$ is an exhausting sequence of $\mathring{M}$, it follows that
$$\lim_{k\rightarrow \infty} \|z_k\|=\|z\| \text{ and } \lim_{k\rightarrow \infty} \sum_{\mathrm{Im}(\sigma_i)\cap \partial M_k \neq \emptyset} |a_i|=0.$$
Hence, taking the limit of inequality (\ref{eqn:C}) as $k$ goes to infinity, we have
\begin{eqnarray}\label{eqn:D} \| M,\partial M \|(1) \leq \|z\|.\end{eqnarray}
Inequality (\ref{eqn:D}) holds for any locally finite fundamental cycle $z$ with finite $\ell^1$-norm. Thus taking the infimum over all locally finite fundamental cycles with finite $\ell^1$-norms, we finally get
$$\| M,\partial M \|(1) \leq \|\mathring{M}\|_\mathrm{lf},$$
which completes the proof.
\end{proof}

Let $\Gamma$ be a $\mathbb Q$-rank $1$ lattice of a semisimple $\mathbb Q$-group $\mathbf G$ with trivial center and noncompact factors (see the definition and properties of $\mathbb Q$-rank $1$ lattices in the next section). Let $X=\mathbf G (\mathbb R)/K$ be the associated symmetric space where $K$ is a maximal compact subgroup of $\mathbf G(\mathbb R)$. Let $\borelserre$ denote the Borel--Serre compactification of $\Gamma \backslash X$ (see \cite[Chapter III.5]{BJ06} for more details of the Borel--Serre compactification). Then $\borelserre$ is a compact manifold with boundary and $\Gamma \backslash X$ is homeomorphic to the interior of $\borelserre$. Applying Lemma \ref{lem:keyinequal} to this situation, we have
$$\| \borelserre,\partial \borelserre \|(1) \leq \|\Gamma \backslash X \|_\mathrm{lf}.$$
Hence it is sufficient to prove $\| \borelserre,\partial \borelserre \|(1) >0$ in order to show that the simplicial volume of $\Gamma \backslash X$ is positive.

\section{$\mathbb Q$-rank $1$ lattices and Tits compactification}

Before giving a proof of $\| \borelserre,\partial \borelserre \|(1) >0$, we first collect some definitions and results about $\mathbb Q$-rank $1$ locally symmetric spaces in this section.
Let $G$ be a noncompact, semisimple Lie group with trivial center and no compact factors.
Then one may define arithmetic lattices in the following way.

\begin{definition}\label{def:arithmetic}
A lattice $\Gamma$ in $G$ is called \emph{arithmetic} if there are
\begin{itemize}
\item[(1)] a semisimple algebraic group $\mathbf{G} \subset \mathrm{GL}_n \mathbb{C}$ defined over $\bq$ and
\item[(2)] an isomorphism $\varphi : \mathbf{G}(\br)^0 \ra G$
\end{itemize}
such that $\varphi(\mathbf{G}(\mathbb{Z}) \cap \mathbf{G}(\br)^0)$ and $\Gamma$ are commensurable.
\end{definition}

It is a well-known result of Margulis \cite{Ma91} that all irreducible
lattices in higher rank Lie groups are arithmetic. The $\bq$-rank
of a semisimple algebraic group $\mathbf{G}$ is defined as the
dimension of a maximal $\bq$-split torus of $\mathbf{G}$. For an
arithmetic lattice $\Gamma$ in $G$, $\bq$-rank$(\Gamma)$ is
defined by the $\bq$-rank of $\mathbf{G}$ where $\mathbf{G}$ is an
algebraic group as in Definition \ref{def:arithmetic}.

A closed subgroup $\mathbf{P} \subset \mathbf{G}$ defined over
$\bq$ is called a \emph{rational parabolic subgroup} if $\mathbf{P}$
contains a maximal connected solvable subgroup of $\mathbf{G}$.
For any rational parabolic subgroup $\mathbf{P}$ of $\mathbf{G}$,
one obtains the \emph{rational Langlands decomposition} of
$P=\mathbf{P}(\br)$:
$$P = N_{\mathbf{P}} \times A_\mathbf{P} \times M_\mathbf{P},$$
where $N_\mathbf{P}$ is the real locus of the unipotent radical
$\mathbf{N}_\mathbf{P}$ of $\mathbf{P}$, $A_\mathbf{P}$ is a
stable lift of the identity component of the real locus of the
maximal $\bq$-torus in the Levi quotient $\mathbf{P} /
\mathbf{N}_\mathbf{P}$, and $M_\mathbf{P}$ is a stable lift of the
real locus of the complement of the maximal $\bq$-torus in
$\mathbf{P} / \mathbf{N}_\mathbf{P}$.

Let $X=G/K$ be the associated symmetric space of noncompact type
with $K$ a maximal compact subgroup of $G$. Write
$X_\mathbf{P}=M_\mathbf{P}/(K \cap M_\mathbf{P})$. Let us denote
by $\tau :  M_\mathbf{P} \ra X_\mathbf{P}$ the canonical
projection. Fix a base point $x_0 \in X$ whose stabilizer group is
$K$. Then we have an analytic diffeomorphism
$$\mu : N_{\mathbf{P}} \times A_\mathbf{P} \times X_\mathbf{P} \ra X, \ (n, a, \tau(m)) \ra nam \cdot x_0,$$
which is called the \emph{rational horocyclic decomposition} of $X$.
For more details, see \cite[Section III.2]{BJ06}.

\subsection{Precise reduction theory}
Let $\mathfrak{g}$ and $\mathfrak{a}_\mathbf{P}$ denote the Lie
algebras of the Lie groups $G$ and $A_\mathbf{P}$ defined above.
Then the adjoint action of $\mathfrak{a}_\mathbf{P}$ on
$\mathfrak{g}$ gives a root space decomposition: $$
\mathfrak{g}=\mathfrak{g}_0 + \sum_{\alpha \in
\Lambda} \mathfrak{g}_\alpha,$$
where $$\mathfrak{g}_\alpha = \{Z \in \mathfrak{g} \ | \
ad(A)(Z)=\alpha(A)Z \text{ for all }A\in \mathfrak{a}_\mathbf{P}
\},$$ and $\Lambda$ consists of
those nontrivial characters $\alpha$ such that
$\mathfrak{g}_\alpha \neq 0$. It is known that
$\Lambda$ is a root system. Fix
an order on $\Lambda$ and
denote by $\Lambda^+$ the
corresponding set of positive roots. Define $$\rho_\mathbf{P} =
\sum_{\alpha \in \Lambda^+}
(\dim \mathfrak{g}_\alpha) \alpha.$$

Let $\Pi$ be the set of
simple positive roots. Since we consider only $\bq$-rank $1$ arithmetic lattices, we
restrict ourselves from now on to the case
$\bq$-rank$(\mathbf{G})=1$. Then the following hold:
\begin{itemize}
\item[(1)] All proper rational parabolic subgroups of $\mathbf{G}$ are minimal.
\item[(2)] For any proper rational parabolic subgroup $\mathbf{P}$ of $\mathbf{G}$, $\dim A_\mathbf{P}=1$.
\item[(3)] The set $\Pi$ of simple positive $\bq$-roots contains only a single element.
\end{itemize}

For any proper rational parabolic subgroup $\mathbf{P}$ of
$\mathbf{G}$ and any $t>1$, define $$A_{\mathbf{P},t}=\{ a\in
A_\mathbf{P} \ | \ \alpha(a)>t \},$$ where $\alpha$ is the unique
root in $\Pi$. For
bounded sets $U\subset N_\mathbf{P}$ and $V\subset X_\mathbf{P}$,
the set $$\mathcal{S}_{\mathbf{P},U,V,t} = U \times
A_{\mathbf{P},t} \times V \subset N_{\mathbf{P}} \times
A_\mathbf{P} \times X_\mathbf{P}$$ is identified with the subset
$\mu(U \times A_{\mathbf{P},t} \times V)$ of $X=G/K$ by the
horospherical decomposition of $X$ and called a \emph{Siegel set}
in $X$ associated with the rational parabolic subgroup
$\mathbf{P}$. Given a $\bq$-rank $1$ lattice $\Gamma$ in $G$, it
is a well known result due to Borel and Harish-Chandra that
there are only finitely many $\Gamma$-conjugacy classes of
rational parabolic subgroups. We next recall the precise reduction theory
in $\bq$-rank $1$ case (see \cite[Proposition III.2.21]{BJ06}).

\begin{theorem}\label{thm:reduction theory}
Let $\Gamma$ be a $\bq$-rank $1$ lattice in $G$. Let $\mathbf{G}$
denote a semisimple algebraic group defined over $\bq$ with
$\bq$-rank$(\mathbf{G})=1$ as in Definition \ref{def:arithmetic}.
Denote by $\mathbf{P}_1,\ldots,\mathbf{P}_s$ representatives of
the $\Gamma$-conjugacy classes of all proper rational parabolic
subgroups of $\mathbf{G}$. Then there exist a bounded set
$\Omega$ in $\Gamma\backslash G/K$ and Siegel sets $U_i \times
A_{\mathbf{P}_i,t_i} \times V_i$, $i=1,\ldots,s$, in $X=G/K$ such
that
\begin{itemize}
\item[(1)] each Siegel set $U_i \times A_{\mathbf{P}_i,t_i}
    \times V_i$ is mapped injectively into $\Gamma \backslash
    X$ by the projection $\pi : X \ra \Gamma\backslash X$,
\item[(2)] the image of $U_i \times V_i$ in $(\Gamma \cap P_i)
    \backslash N_{\mathbf{P}_i} \times X_{\mathbf{P}_i}$ is
    compact,
\item[(3)] $\Gamma \backslash X$ admits the disjoint
    decomposition $$\Gamma \backslash X = \Omega \cup
    \coprod_{i=1}^s \pi(U_i \times A_{\mathbf{P}_i,t_i} \times
    V_i).$$
\end{itemize}
\end{theorem}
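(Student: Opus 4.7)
This result is the classical Borel--Harish-Chandra reduction theory in the special case $\bq\text{-rank}(\mathbf{G}) = 1$, and my plan is to recall the three-step structure of its proof. First, I would invoke Borel and Harish-Chandra's finiteness theorem to fix a finite set of representatives $\mathbf{P}_1,\ldots,\mathbf{P}_s$ for the $\Gamma$-conjugacy classes of proper rational parabolic subgroups of $\mathbf{G}$. Since $\bq\text{-rank}(\mathbf{G}) = 1$, each such $\mathbf{P}_i$ is simultaneously maximal proper and minimal, $A_{\mathbf{P}_i}$ is one-dimensional, and there is a single simple root $\alpha_i$ which controls the single depth parameter $t_i$.

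Next, for each $i$ I would construct bounded sets $U_i \subset N_{\mathbf{P}_i}$ and $V_i \subset X_{\mathbf{P}_i}$ so that property $(2)$ holds. Here I use that $\Gamma \cap N_{\mathbf{P}_i}$ is a cocompact lattice in $N_{\mathbf{P}_i}$, which admits a precompact fundamental domain $U_i$, and that the image of $\Gamma \cap P_i$ in the Levi quotient of $\mathbf{P}_i$ acts on $X_{\mathbf{P}_i}$ with compact quotient (because the semisimple part of the Levi has $\bq$-rank $0$); this produces a bounded $V_i$ whose $(\Gamma \cap P_i)$-orbit has compact image in $(\Gamma \cap P_i)\backslash(N_{\mathbf{P}_i}\times X_{\mathbf{P}_i})$.

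The crux is the calibration of the cut-off parameters $t_i$, for which the key input is the Siegel property: for any pair of Siegel sets $\mathcal{S}, \mathcal{S}'$ in $X$, the set $\{\gamma \in \Gamma : \gamma \mathcal{S} \cap \mathcal{S}' \neq \emptyset\}$ is finite. I would pick $t_i$ large enough so that simultaneously: within a single Siegel set $\mathcal{S}_{\mathbf{P}_i,U_i,V_i,t_i}$ the only $\gamma \in \Gamma$ that returns a point to the set lies in $\Gamma \cap P_i$, which (after refining $U_i, V_i$ to strict fundamental domains for the $\Gamma \cap P_i$-action) yields injectivity of $\pi$ on the Siegel set; and for $i \neq j$ no element of $\Gamma$ carries a point of $\mathcal{S}_{\mathbf{P}_i,\ldots,t_i}$ to $\mathcal{S}_{\mathbf{P}_j,\ldots,t_j}$, because in $\bq$-rank $1$ the distinct cusps recede along independent one-dimensional rays in the $A_{\mathbf{P}_i}$-direction. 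The classical coarse reduction theorem then ensures that a finite union of such Siegel sets together with a bounded complement covers $\Gamma \backslash X$, and the residual compact piece is the set $\Omega$ in $(3)$.

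The principal difficulty is precisely the simultaneous calibration of the $t_i$: they must be large enough to force injectivity and inter-cuspidal disjointness, yet the complement of the cuspidal Siegel sets in $\Gamma \backslash X$ must remain precompact. This balance is tractable in $\bq$-rank $1$ precisely because the ends of $\Gamma \backslash X$ are in bijection with $\mathbf{P}_1,\ldots,\mathbf{P}_s$ and each Siegel set models exactly one end; in higher $\bq$-rank the analogous statement requires the more delicate cell decomposition of \cite[Chapter III.2]{BJ06}, which is exactly why the theorem is stated only in the $\bq$-rank $1$ case.
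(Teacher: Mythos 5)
The paper itself does not prove this theorem: it is quoted verbatim from Borel and Ji's book (Proposition III.2.21 of \cite{BJ06}), cited immediately before the statement, so there is no internal argument to compare against. Your sketch is, however, a correct outline of the classical proof on which that reference rests, and the three ingredients you identify — Borel--Harish-Chandra finiteness of $\Gamma$-conjugacy classes of rational parabolics, cocompactness of the arithmetic pieces coming from Godement's criterion applied to the unipotent radical $N_{\mathbf{P}_i}$ and to the $\bq$-rank $0$ Levi, and the Siegel finiteness property to drive the parameters $t_i$ up until injectivity and pairwise disjointness hold — are exactly the right ones.

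Two small points where you could be tighter. First, the claim that for $i \neq j$ no $\gamma \in \Gamma$ carries a point of $\mathcal{S}_{\mathbf{P}_i,U_i,V_i,t_i}$ into $\mathcal{S}_{\mathbf{P}_j,U_j,V_j,t_j}$ is not really explained by ``receding along independent one-dimensional rays''; the clean argument is that the Siegel property makes the return set finite, and if a sequence of intersections persisted for $t_i \to \infty$ one would conclude $\gamma\mathbf{P}_i\gamma^{-1} = \mathbf{P}_j$, contradicting the choice of representatives. Second, once the coarse reduction theorem provides a finite Siegel-set covering of $\Gamma\backslash X$ with some parameters $t_i'$, precompactness of the complement $\Omega$ of the truncated Siegel sets at larger parameters $t_i$ is automatic (the leftover is contained in finitely many compact ``collar'' regions $t_i' \le \alpha(a) \le t_i$ plus the original bounded part); so there is no delicate balance to strike — once the $t_i$ are large enough for disjointness and injectivity, boundedness of $\Omega$ comes for free. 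With those clarifications your argument matches the proof in \cite{BJ06}.
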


Geometrically, $B_\mathbf{P}(t)=\mu(N_\mathbf{P} \times
A_{\mathbf{P},t} \times X_\mathbf{P})$ is a horoball for any
proper minimal rational parabolic subgroup $\mathbf{P}$ of
$\mathbf{G}$. Hence each $\mu(U_i \times A_{\mathbf{P}_i,t_i}
\times V_i)$ is a fundamental domain of the cusp group $\Gamma_i =
\Gamma \cap \mathbf{P}(\br)$ acting on the horoball
$B_{\mathbf{P}_i}(t_i)$ and each $\mu(U_i \times V_i)$ is a
bounded domain in the horosphere that bounds the horoball
$B_{\mathbf{P}_i}(t_i)$. Hence each set $\pi(U_i \times
A_{\mathbf{P}_i,t_i} \times V_i)$ corresponds to a cusp of the
locally symmetric space $\Gamma \backslash X$.

Each Siegel set $U_i \times
A_{\mathbf{P}_i,t_i} \times V_i$ meets $\partial X$ at only one point $\xi_i$ since $\dim A_{\mathbf P_i,t_i}=1$. In fact $\xi_i$ is the point at infinity corresponding to the half-geodesic $A_{\mathbf{P}_i,t_i}$.

\subsection{Rational horocyclic coordinates}
Let $\mathbf{P}$ be a proper minimal rational parabolic subgroup
of $\mathbf{G}$ with $\bq$-rank$(\mathbf{G})=1$. The pullback
$\mu^*g$ of the metric $g$ on $X$ to $N_{\mathbf{P}} \times
A_\mathbf{P} \times X_\mathbf{P}$ is given by
$$ds^2_{(n,a,\tau(m))}= \sum_{\alpha \in
\Lambda^+} e^{-2\alpha(\log a)}
h_\alpha \oplus da^2 \oplus d(\tau(m))^2,$$ where
$h_\alpha$ is some metric on $\mathfrak{g}_\alpha$ that smoothly
depends on $\tau(m)$ but is independent of $a$. Choosing
orthonormal bases $\{ N_1,\ldots,N_r\}$ of
$\mathfrak{n}_\mathbf{P}$, $\{Z_1,\ldots,Z_l\}$ of some tangent
space $T_{\tau(m)}X_\mathbf{P}$ and $A \in
\mathfrak{a}_\mathbf{P}$ with $\| A\|=1$, one can obtain
\emph{rational horocyclic coordinates} $ \eta : N_{\mathbf{P}}
\times A_\mathbf{P} \times X_\mathbf{P} \ra \mathbb{R}^r \times
\mathbb{R} \times \mathbb{R}^l$ defined by $$\eta\left(\exp
\left(\sum_{i=1}^r x_iN_i\right), \exp (yA), \exp
\left(\sum_{i=1}^lz_iZ_i\right)\right)=(x_1,\ldots,x_r,y,z_1,\ldots,z_l).$$
We abbreviate $(x_1,\ldots,x_r,y,z_1,\ldots,z_l)$ by $(x,y,z)$.
Then the $G$-invariant Riemannian volume form $\omega_X$ on $X\cong
N_{\mathbf{P}} \times A_\mathbf{P} \times X_\mathbf{P}$ is given in terms of the rational horocyclic coordinates by
\begin{eqnarray}\label{eqn:volumeformula} \omega_X=h(x,z) \exp^{-2\|\rho_\mathbf{P} \| y} dx dy dz,\end{eqnarray} where
$h(x,z)$ is a smooth function that is independent of $y$. See
\cite[Corollary 4.4]{Bo3}.

Note that all proper rational minimal parabolic subgroups are
conjugate under $\mathbf{G}(\bq)$. Hence the respective root
systems are canonically isomorphic \cite{Bo2} and moreover, one
can conclude $\|\rho_\mathbf{P}\|=\|\rho_{\mathbf{P}'}\|$ for any
two proper minimal rational parabolic subgroups
$\mathbf{P},\mathbf{P}'$ of $\bq$-rank $1$ algebraic group
$\mathbf{G}$.

\subsection{Tits compactification and disjoint mapping cone}

We recall here the Tits compactification of a $\mathbb Q$-rank $1$ locally symmetric space.
For any proper rational parabolic subgroup $\mathbf P$ of $\mathbf G$, define the positive Weyl chamber $$A^+_{\mathbf P} = \{ a\in A_{\mathbf P} \ | \ \alpha(a)>0, \ \alpha \in  \Lambda \},$$ and let $A^+_{\mathbf P}(\infty)$ be the set of points at infinity of $A_{\mathbf P}^+$. Then the rational Tits building for $\mathbf G$ is realized by a disjoint union of the open simplexes $$\Delta_{\mathbb Q}(X) = \coprod_{\mathbf P} A^+_{\mathbf P}(\infty),$$ where $\mathbf P$ runs over all the rational parabolic subgroups of $\mathbf G$.
In the case that $\mathbf G$ has $\mathbb Q$-rank $1$, its Tits building $\Delta_{\mathbb Q}(X)$ is a disjoint union of uncountably many points and $\Gamma \backslash \Delta_{\mathbb Q}(X)$ consists of finitely many points, which are in canonical one-to-one correspondence with the ends of $\Gamma \backslash X$. Then the Tits compactification $\tits$ of $\Gamma \backslash X$ is defined by the quotient $\Gamma \backslash (X\cup \Delta_{\mathbb Q}(X))$. We refer the reader to \cite[Chapter III.12]{BJ06} for more details.

By using the precise reduction theory, the Tits compactification of a $\mathbb Q$-rank $1$ locally symmetric space is obtained as follows. Let $\Gamma \backslash X = \Omega \cup \coprod_{i=1}^s \pi(U_i \times A_{\mathbf{P}_i,t_i} \times V_i)$ be the disjoint decomposition as in Theorem \ref{thm:reduction theory}. Then $\Omega$ is a compact submanifold with boundary and is homeomorphic to $\borelserre$. Moreover, each connected component of the boundary of $\Omega$ consists of the projection of a horosphere in $X$.
The set $(U_i \times A_{\mathbf P_i,t_i} \times V_i)(\infty)$ of points at infinity for each Siegel set $U_i \times A_{\mathbf P_i,t_i} \times V_i$ consists of exactly one point $A_{\mathbf P_i}^+(\infty)=\{\xi_i\}$. Hence the Tits compactification $\tits$ is
$$\tits = \Omega \cup \coprod_{i=1}^s \pi(U_i \times A_{\mathbf P_i,t_i} \times V_i \cup \{\xi_i\}).$$
Thus, in the $\mathbb Q$-rank $1$ case, the Tits compactification is the end compactification, which is obtained by adding one point to each end.

We now identify $\tits$ with the disjoint mapping cone $Dcone(\Omega,\partial \Omega)$ of $(\Omega,\partial \Omega)$.
In fact one can easily see that there is a natural identification between them: first of all, we define a homeomorphism $\varphi : [0,1] \rightarrow \mathbb R_{\geq 0}\cup \{+\infty\}$ by $$\varphi (t)=\tan \frac{\pi}{2}t.$$
Geometrically, $U_i \times A_{\mathbf P_i,t_i} \times V_i \cup \{\xi_i\}$ can be viewed as the geodesic cone over $U_i \times \{a_i\}\times V_i$ with top point $\xi_i$ where $a_i \in A_{\mathbf P_i}$ with $\alpha(a_i)=t_i$. Then $\pi(U_i \times \{a_i\} \times V_i)$ is a connected component of the boundary of $\Omega$, denoted by $\partial_i \Omega$.
We now define a homeomorphism $f_i : \partial_i \Omega \times [0,1] \rightarrow U_i \times \overline{A_{\mathbf P_i,t_i}} \times V_i \cup \{\xi_i\}  $ by $$f_i(\pi(n,a_i,m),t)=(n,a(t),m) \text{ for }t\in [0,1) \text{ and } f_i(\pi(n,a_i,m),1)=\xi_i$$ where $\overline{A_{\mathbf P_i,t_i}}=\{ a \in A_{\mathbf P_i} \ | \ \alpha(a)\geq t_i\}$ and $a(t)$ is the unique element of $A_{\mathbf P_i,t_i}$ with $\alpha(a(t))=t_i +\varphi(t)$. In other words, $a(t)$ is the unique point on $A_{\mathbf P_i,t_i}$ with $d(a_i,a(t))=\varphi(t)$. Since $f_i$ maps $\partial_i \Omega \times \{1\}$ to $\xi_i$ and the set $U_i \times \overline{A_{\mathbf P_i,t_i}} \times V_i \cup \{\xi_i\}$ is mapped injectively into $\tits$ by the projection $\pi : X \cup \Delta_{\mathbb Q}(X) \rightarrow \tits$, the map $f_i$ induces a homeomorphism $$f_i : Cone(\partial_i \Omega) \rightarrow \pi(U_i\times \overline{A_{\mathbf P_i,t_i}} \times V_i \cup \{\xi_i\}).$$
Then we obtain a homeomorphism $f : Dcone(\Omega,\partial \Omega) \rightarrow \tits$ with $f|_{\Omega}=id$ and $f|_{Cone(\partial_i \Omega)}=f_i$. From now on, we think of $Dcone(\Omega,\partial \Omega)$ as $\tits$ via this homeomorphism.

For $\tau \in S_{k-1}(\partial_i \Omega)$, recall that we define a singular $k$-simplex $cone(\tau)$ in $Dcone(\Omega,\partial \Omega)$ in Section \ref{sec:cone}. Via the identification between $Dcone(\Omega,\partial \Omega)$ and $\tits$, it can be easily seen that $cone(\tau)$ is the geodesic cone over $\tau$ with top point $\pi(\xi_i)$. Here we give another description for the construction of $cone(\tau)$ in $\tits$. First, regard $\tau$ as a simplex $\tau :\Delta^{k-1} \rightarrow \Gamma \backslash X$ and then choose a lift $\tilde \tau : \Delta^{k-1} \rightarrow X$.
Since $\partial \Omega$ is the projection of a horosphere in $X$, we can assume that the image of $\tilde \tau$ is contained in a horosphere $\tilde S$. Note that such horospheres are in one-to-one correspondence with the vertices of the Tits building $\Delta_{\mathbb Q}(X)$. Let $\tilde c$ be the vertex of $\Delta_{\mathbb Q}(X)$ associated with $\tilde S$. We define the cone of $\tilde \tau$, denoted by $cone(\tilde \tau)$, as the geodesic cone over $\tilde \tau$ with top point $\tilde c$. Then we obtain a continuous map $cone(\tilde \tau):\Delta^k \rightarrow X\cup \Delta_{\mathbb Q}(X)$. Since the correspondence between the vertices of $\Delta_{\mathbb Q}(X)$ and horospheres is $\Gamma$-equivariant, $cone(\tilde \tau)$ is well defined up to the $\Gamma$-action on $X\cup \Delta_{\mathbb Q}(X)$ and so we get a singular simplex in $\tits$. The resulting simplex is actually $cone(\tau)$ as defined in Section \ref{sec:cone} up to a reparametrization.

\begin{lemma}\label{lem:conevol}
Let $\omega_X$ be the Riemannian volume form on $X$. Then for any $\tau \in S_{n-1}(\partial \Omega)$,
$$\left| \int_{cone(\tau)} \omega_X \right| \leq \frac{\mathrm{Vol}_{n-1}(\tau)}{2 \|\rho_{\mathbf P}\|}.$$
\end{lemma}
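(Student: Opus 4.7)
My plan is to work in a convenient coordinate system on the universal cover and exploit the product structure of the volume form in horocyclic coordinates.

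First I would lift $\tau$ to a singular simplex $\tilde\tau : \Delta^{n-1} \to X$ whose image lies on a single horosphere $\tilde S$, as guaranteed by the discussion preceding the lemma (the boundary of $\Omega$ is the projection of a horosphere). Let $\mathbf P$ be the minimal rational parabolic subgroup associated with $\tilde S$, and let $\tilde c$ be the vertex of $\Delta_{\mathbb Q}(X)$ corresponding to $\tilde S$. After conjugating by an element of $\mathbf G(\mathbb Q)$ I may assume the rational horocyclic coordinates $\eta = (x,y,z)$ are adapted to $\mathbf P$, so that $\tilde S = \{y = y_0\}$ for some constant $y_0$ and the unit vector $A \in \mathfrak a_{\mathbf P}$ points toward $\tilde c$. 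Since $cone(\tilde\tau)$ is the geodesic cone from the ideal point $\tilde c$, the geodesic from $\tilde\tau(\delta) = (x_0(\delta), y_0, z_0(\delta))$ to $\tilde c$ is the vertical ray $t \mapsto (x_0(\delta), y_0 + t, z_0(\delta))$, $t \in [0,\infty)$.

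Next I would pull back $\omega_X$ via this parametrization of $cone(\tilde\tau)$. Using the formula $\omega_X = h(x,z)\, e^{-2\|\rho_{\mathbf P}\| y}\, dx\, dy\, dz$ from \eqref{eqn:volumeformula}, together with the fact that the $(x,z)$-coordinates of the cone are independent of $t$ (only $y$ varies along the ray), the pullback splits as a product: the $dy$ direction contributes $e^{-2\|\rho_{\mathbf P}\| y}\, dy$, while the differentials in the $(x,z)$ variables come entirely from $\tilde\tau$. Taking absolute values and integrating $y$ from $y_0$ to $+\infty$ gives
\begin{equation*}
\left| \int_{cone(\tilde\tau)} \omega_X \right| \leq \int_{\Delta^{n-1}} h(x_0(\delta), z_0(\delta))\, |J_{\tilde\tau}(\delta)| \left( \int_{y_0}^{\infty} e^{-2\|\rho_{\mathbf P}\| y}\, dy \right) d\delta,
\end{equation*}
where $|J_{\tilde\tau}(\delta)|$ denotes the Jacobian of $\tilde\tau$ in the remaining horospherical directions. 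The inner integral equals $e^{-2\|\rho_{\mathbf P}\| y_0}/(2\|\rho_{\mathbf P}\|)$.

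Finally I would identify the resulting expression as the volume of $\tilde\tau$. Restricting the metric on $X$ to the horosphere $\tilde S = \{y = y_0\}$, the induced volume form is precisely $h(x,z)\, e^{-2\|\rho_{\mathbf P}\| y_0}\, dx\, dz$ (the $dy^2$ factor drops out and the other factors are evaluated at $y=y_0$). Therefore
\begin{equation*}
\mathrm{Vol}_{n-1}(\tilde\tau) = \int_{\Delta^{n-1}} h(x_0(\delta), z_0(\delta))\, e^{-2\|\rho_{\mathbf P}\| y_0}\, |J_{\tilde\tau}(\delta)|\, d\delta,
\end{equation*}
so pulling the constant $e^{-2\|\rho_{\mathbf P}\| y_0}/(2\|\rho_{\mathbf P}\|)$ outside yields the bound $\mathrm{Vol}_{n-1}(\tilde\tau)/(2\|\rho_{\mathbf P}\|)$. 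Since the covering map $\pi$ is a local isometry, $\mathrm{Vol}_{n-1}(\tilde\tau) = \mathrm{Vol}_{n-1}(\tau)$, and $\int_{cone(\tau)} \omega_X = \int_{cone(\tilde\tau)}\omega_X$ (the volume form is $G$-invariant and the cone operation commutes with $\pi$ up to reparametrization), completing the proof. The main conceptual point, rather than obstacle, is to notice that the exponential decay factor $e^{-2\|\rho_{\mathbf P}\| y}$ exactly matches the horospherical volume distortion, so the cone construction converges and produces the clean constant $1/(2\|\rho_{\mathbf P}\|)$.
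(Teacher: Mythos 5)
Your proof is correct and is precisely the "direct computation" that the paper leaves implicit: lift $\tau$ to the horosphere, parametrize the geodesic cone by a vertical ray in rational horocyclic coordinates, apply the volume-form formula from equation~(\ref{eqn:volumeformula}), and integrate out the $y$-variable to produce the factor $1/(2\|\rho_{\mathbf P}\|)$. The identification of the remaining integrand with the induced $(n-1)$-volume on the horosphere (since the metric is block-diagonal with the $A_{\mathbf P}$-factor being just $dy^2$) is the key observation, and you have carried it out correctly.
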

\begin{proof}
As observed above, $cone(\tau)$ is obtained by the geodesic cone $cone(\tilde \tau)$ over $\tilde \tau$ with top point $\tilde c$ where $\tilde \tau : \Delta^{n-1} \rightarrow N_{\mathbf P_i} \times \{a_i\} \times X_{\mathbf P_i}$ is a lift of $\tau$. Then a direct computation with the formula in (\ref{eqn:volumeformula}) for the Riemannian volume form $\omega_X$ in the rational horocyclic coordinates $N_{\mathbf P_i} \times A_{\mathbf P_i} \times X_{\mathbf P_i}$ gives the inequality in the lemma.
\end{proof}

\section{Bounded volume class}

Let us define the set of conic singular simplices in $\tits$ by $$Cone(S_{k-1}(\partial \Omega))= \{ cone(\tau) \ | \ \tau \in S_{k-1}(\partial \Omega) \}.$$ Let $C^{conic}_*(\tits)$ be the set of real singular chains generated by $S_*(\Omega)$ and $Cone(S_{*-1}(\partial \Omega))$.
Then the Riemannian volume form $\omega_X$ on $X$ gives rise to a map $\Theta : C_n^{conic}(\tits) \rightarrow \mathbb R$  defined as follows: $$\Theta(\sigma)= \omega_X (\sigma):=\int_\sigma \omega_X \text{ for }\sigma \in S_n(\Omega) $$ and $$\Theta(cone(\tau))= \omega_X (cone(\tau)):=\int_{cone(\tau)}\omega_X \text{ for }\tau \in S_{n-1}(\partial \Omega).$$
This definition makes sense due to Lemma \ref{lem:conevol} even though $cone(\tau)$ is noncompact in $\Gamma \backslash X$.

\begin{lemma}\label{lem:cocycle}
The map $\Theta : C_n^{conic}(\tits) \rightarrow \mathbb R$ is a cocycle.
\end{lemma}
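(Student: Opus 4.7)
The plan is to verify $\delta\Theta(z) := \Theta(\partial z) = 0$ on each of the two kinds of generators of $C_{n+1}^{conic}(\tits) = C_{n+1}(\Omega) \oplus Cone(C_n(\partial \Omega))$. For a generator $\sigma \in S_{n+1}(\Omega)$, Stokes' theorem on the $n$-dimensional manifold $X$ gives $\Theta(\partial\sigma) = \int_{\partial\sigma}\omega_X = \int_\sigma d\omega_X = 0$ at once, since $\omega_X$ is a top-degree (hence closed) form.

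For a generator of the form $cone(\tau)$ with $\tau \in S_n(\partial \Omega)$, the boundary formula (\ref{eqn:coneboundary}) gives
\[
\Theta(\partial cone(\tau)) = \Theta(cone(\partial \tau)) + (-1)^{n+1}\Theta(\tau).
\]
The second term vanishes immediately: $\tau$ factors through the projection of an $(n-1)$-dimensional horosphere, so the pullback $\tau^*\omega_X$ of the $n$-form $\omega_X$ is identically zero. The content of the lemma therefore lies in establishing $\Theta(cone(\partial\tau)) = 0$.

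For this, I would lift $\tau$ to $\tilde\tau : \Delta^n \to \tilde S \subset X$, with $\tilde S$ a horosphere based at a Tits vertex $\tilde c \in \Delta_{\mathbb Q}(X)$, and apply Stokes to a compact truncation $cone_T(\tilde\tau)$ of the infinite geodesic cone, obtained by extending $\tilde\tau$ along the $A_{\mathbf P}$-flow toward $\tilde c$ only up to horospherical level $T$. Viewed in rational horocyclic coordinates as the prism $\Delta^n \times [0,T]$, the boundary of $cone_T(\tilde\tau)$ consists of the base $\tilde\tau$, a top slice lying inside the horosphere at level $T$, and the lateral faces $cone_T(\partial_i \tilde\tau)$. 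Stokes' theorem together with $d\omega_X = 0$ therefore yields
\[
0 = \pm \int_{\tilde\tau}\omega_X \pm \int_{\text{top slice at level }T}\omega_X + \sum_{i=0}^n (-1)^i \int_{cone_T(\partial_i \tilde\tau)} \omega_X,
\]
and the first two summands vanish by the same horospherical-dimension argument already used to kill $\Theta(\tau)$. Letting $T \to \infty$ and invoking Lemma \ref{lem:conevol} (which, via the $e^{-2\|\rho_{\mathbf P}\|y}$ factor in (\ref{eqn:volumeformula}), supplies an absolutely convergent bound for integrals of $\omega_X$ over geodesic cones over horospherical simplices) to legitimise the passage to the limit yields $\Theta(cone(\partial\tau)) = \sum_i (-1)^i \int_{cone(\partial_i \tilde\tau)}\omega_X = 0$. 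The only subtle point is this limit as $T\to\infty$, but Lemma \ref{lem:conevol} is precisely what one needs, so no genuine obstacle remains.
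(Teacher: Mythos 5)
Your proof is correct and follows essentially the same route as the paper's: both apply Stokes' theorem to the geodesic cone truncated at a finite horospherical level and then pass to the limit using the exponential decay of $\omega_X$ in the $A_{\mathbf P}$-direction (i.e.\ the estimate underlying Lemma \ref{lem:conevol}). Your presentation is marginally cleaner in two small respects: you invoke the boundary identity (\ref{eqn:coneboundary}) at the outset to split off the $\Theta(\tau)$ term, and you observe that both the base integral and the top-slice integral vanish identically because each maps into an $(n-1)$-dimensional horosphere (the paper only records that the top slice decays exponentially, which is weaker but equally sufficient).
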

\begin{proof}
From Stokes' theorem, it follows that $\delta \Theta(\sigma')=0$ for $\sigma'\in S_{n+1}(\Omega)$. Hence it is sufficient to show that $\delta \Theta(cone(\tau'))=\omega_X(\partial_{n+1} cone(\tau'))=0$ for any $\tau' \in S_n(\partial \Omega)$. Recall that $cone(\tau')$ is regarded as a map $cone(\tau') : \Delta^n \times [0,1] \rightarrow \tits$. Moreover, for $t \in [0,1)$, $cone(\tau')$ defines a map $\Delta^n \times \{t\} \rightarrow \Gamma \backslash X-\Omega.$
Applying Stokes' theorem to the compact set $cone(\tau')(\partial (\Delta^n \times [0,t]))$,
$$\int_{\partial (\Delta^n \times [0,t])} cone(\tau')^*\omega_X = \int_{\Delta^n \times [0,t]}  cone(\tau')^*d\omega_X=0.$$
On the other hand, $\partial (\Delta^n \times [0,t])$ can be decomposed into three pieces, $\partial \Delta^n \times [0,t]$, $\Delta^n\times \{0\}$ and $\Delta^n \times \{t\}$. As $t \rightarrow 1$, it can be easily seen that the integral over $\partial \Delta^n \times [0,t]$ converges to $\omega_X(cone(\partial_n \tau'))$ and the integral over $\Delta^n \times \{t\}$ decays exponentially to $0$. The integral over $\Delta^n \times \{0\}$ is $(-1)^{n+1} \omega_X(\tau')$ and hence we have
\begin{eqnarray*}
0&=&\lim_{t\rightarrow 1} \int_{\partial (\Delta^n \times [0,t])} cone(\tau')^*\omega_X \\
&=&\omega_X(cone(\partial_n \tau')+(-1)^{n+1}\tau')=\omega_X(\partial_{n+1} cone(\tau'))
\end{eqnarray*}
Therefore, we conclude that $\Theta$ is a cocycle.
\end{proof}

\begin{lemma}
Let $[\tits]_{conic}$ be the fundamental class in $H_n^{conic}(\tits)$. Then $$ \langle [\Theta], [\tits]_{conic} \rangle= \mathrm{Vol}(\Gamma \backslash X).$$
\end{lemma}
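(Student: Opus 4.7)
The plan is to evaluate $\Theta$ on a convenient representative of $[\tits]_{conic}$. By the proof of Corollary \ref{cor:conicnorm}, every fundamental cycle of $H^{conic}_n(\tits)$ has the form $z = z_M + (-1)^{n+1}\,cone(\partial z_M)$, where $z_M \in C_n(\Omega)$ is a relative fundamental cycle of $(\Omega,\partial\Omega)$. Since Lemma \ref{lem:cocycle} guarantees that $\Theta$ is a cocycle, the pairing $\langle [\Theta], [\tits]_{conic}\rangle$ is independent of this choice, so it suffices to prove
\[
\omega_X(z_M) + (-1)^{n+1}\omega_X\bigl(cone(\partial z_M)\bigr) = \mathrm{Vol}(\Gamma\backslash X).
\]
I will split the right-hand side via the precise reduction theory as $\mathrm{Vol}(\Gamma \backslash X) = \mathrm{Vol}(\Omega) + \sum_{i=1}^s \mathrm{Vol}(E_i)$, where $E_i = \pi(U_i \times A_{\mathbf P_i, t_i} \times V_i)$ denotes the $i$-th cuspidal region, and match each summand to one of the two pieces on the left.

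The first summand is the classical de Rham pairing: since $z_M$ represents the relative fundamental class of the compact oriented manifold $(\Omega,\partial\Omega)$ and $\omega_X$ restricts to the Riemannian volume form on $\Omega$, one has $\omega_X(z_M) = \int_\Omega \omega_X = \mathrm{Vol}(\Omega)$. For the second summand, decompose $\partial z_M = \sum_{i=1}^s c_i$ according to the connected components $\partial_i\Omega$ of $\partial\Omega$; each $c_i$ is a fundamental cycle of $\partial_i\Omega$ with the boundary orientation inherited from $\Omega$. Under the identification $\tits \cong Dcone(\Omega,\partial\Omega)$, the chain $cone(c_i)$ is supported in the closed cuspidal region $\pi(U_i \times \overline{A_{\mathbf P_i,t_i}}\times V_i \cup \{\xi_i\})$, being obtained by geodesically coning each simplex of $c_i$ to the ideal vertex $\xi_i$. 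Because each geodesic ray from $\partial_i\Omega$ to $\xi_i$ is swept out exactly once by this construction, $cone(c_i)$ parametrizes the cusp $E_i$ with total algebraic degree $\pm 1$, the global sign being determined by orientation conventions.

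To make this rigorous I would apply a truncation-and-Stokes argument. For each $t > t_i$ let $\Omega_t$ be the compact submanifold obtained from $\Omega$ by attaching, to each $\partial_i\Omega$, the cylinder $\pi(U_i \times \{a \in A_{\mathbf P_i} : t_i \leq \alpha(a) \leq t\} \times V_i)$. A relative fundamental cycle $w_t$ of $(\Omega_t,\partial\Omega_t)$ is obtained by extending $z_M$ with prism chains along the simplices of $c_i$; as in the first step, $\omega_X(w_t) = \mathrm{Vol}(\Omega_t) \to \mathrm{Vol}(\Gamma\backslash X)$ as $t \to \infty$. The prism chains differ from the truncations of $cone(c_i)$ at level $t$ only by reparametrization and the sign dictated by (\ref{eqn:coneboundary}), while the $\omega_X$-integral of $cone(c_i)$ past level $t$ decays exponentially thanks to the factor $e^{-2\|\rho_\mathbf{P}\|y}$ in (\ref{eqn:volumeformula})---the same mechanism behind Lemmas \ref{lem:conevol} and \ref{lem:cocycle}. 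Passing to the limit $t \to \infty$ and tracking signs yields $(-1)^{n+1}\omega_X\bigl(cone(\partial z_M)\bigr) = \sum_i \mathrm{Vol}(E_i)$, which combined with the first step completes the proof.

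The main obstacle I anticipate is the sign bookkeeping: one must verify that the factor $(-1)^{n+1}$ from the cone-boundary identity (\ref{eqn:coneboundary}) combines correctly with the boundary orientation on $\partial_i\Omega$ and with the orientation of $E_i$ as an open subset of the oriented manifold $\Gamma \backslash X$. Modulo these sign verifications, the argument reduces to Stokes' theorem together with the integrability estimates already recorded in Lemmas \ref{lem:conevol} and \ref{lem:cocycle}.
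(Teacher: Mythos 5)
Your proposal is correct and follows the paper's approach: the paper likewise evaluates $\Theta$ on a representative of the form $z + (-1)^{n+1}cone(\partial z)$ with $z$ a relative fundamental cycle of $(\Omega,\partial\Omega)$ (chosen from a triangulation so that $\omega_X(z)=\mathrm{Vol}(\Omega)$ and $(-1)^{n+1}\omega_X(cone(\partial z))=\mathrm{Vol}(\Gamma\backslash X - \Omega)$ are immediate), and adds the two. Your truncation-and-Stokes elaboration, relying on the exponential decay already used in Lemmas \ref{lem:conevol} and \ref{lem:cocycle}, is a legitimate way to make the paper's terse ``it is clear'' precise; the sign bookkeeping you flag is the only point the paper's triangulation choice handles more transparently.
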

\begin{proof}
Let $z$ be a relative fundamental cycle of $(\Omega,\partial \Omega)$ obtained from a triangulation of $(\Omega,\partial \Omega)$.
Then $z+(-1)^{n+1}cone(\partial z)$ is a fundamental cycle representing $[\tits]_{conic}$. It is clear that $$\omega_X(z)=\mathrm{Vol}(\Omega) \text{ and } \omega_X((-1)^{n+1}cone(\partial z))=\mathrm{Vol}(\Gamma \backslash X - \Omega).$$ Hence we have $$\langle \Theta, z+(-1)^{n+1} cone(\partial z) \rangle=\mathrm{Vol}(\Gamma\backslash X),$$
which implies the lemma.
\end{proof}

Now we will define a bounded cocycle $\Theta_b : C_n^{conic}(\tits) \rightarrow \mathbb R$ by using the barycentric straightening $st_* : C_*(\Gamma \backslash X) \rightarrow C_*(\Gamma \backslash X)$. For a simplex $\sigma \in S_k(\Omega)\subset S_k(\Gamma \backslash X)$, it has been already seen that $st_k(\sigma)$ is well defined. We need to define the barycentric straightening process for simplices in $Cone(S_{*-1}(\partial \Omega))$.
Let $\tau \in S_{k-1}(\partial \Omega)$ and $\tilde c \in \Delta_{\mathbb Q}(X)$ be the vertex associated with a lift $\tilde \tau : \Delta^{k-1} \rightarrow X$ of $\tau$.
Then we define $cone(st_{k-1}(\tilde \tau))$ by the geodesic cone over $st_{k-1}(\tilde \tau)$ with top point $\tilde c$. It is not difficult to check that $cone(st_{k-1}(\tilde \tau))$ is well defined $\Gamma$-equivariantly. Hence $cone(st_{k-1}(\tau)) \in S_k(\tits)$ is well defined.
We use this process as the straightening process of $cone(\tau)$, i.e.,
$$st_k(cone(\tau)) := cone (st_{k-1}(\tau)).$$
Notice that $st_k(cone(\tau))$ is not in $Cone(S_{k-1}(\partial \Omega))$ but in $S_k(\tits)$.
We extend the barycentric straightening process on $Cone(S_{*-1}(\partial \Omega))$ linearly to $Cone(C_{*-1}(\partial \Omega))$.

The straightening process on $C^{conic}_*(\tits)$ commutes with the boundary operator on $C_*(\tits)$:
\begin{eqnarray*}
(st_{k-1} \circ \partial_k)(cone(\tau)) &=& st_{k-1}(cone(\partial_{k-1} \tau) +(-1)^k\tau) \\
&=& cone (st_{k-2}(\partial_{k-1} \tau)) +(-1)^k st_{k-1}(\tau) \\
&=& cone (\partial_{k-1} (st_{k-1}(\tau)))+(-1)^k st_{k-1}(\tau) \\
&=& \partial_k cone(st_{k-1}(\tau)) \\
&=& (\partial_k \circ st_k) (cone(\tau)).
\end{eqnarray*}
Using this straightening, define a cochain $\Theta_b : C_n^{conic}(\tits) \rightarrow \mathbb R$ by
$$\Theta_b(\sigma)= \omega_X (st_n(\sigma)) \text{ for }\sigma \in S_n(\Omega)$$  and $$\Theta_b(cone(\tau))= \omega_X (st_n(cone(\tau))) \text{ for }\tau \in S_{n-1}(\partial \Omega).$$

\begin{lemma}\label{lem:boundedcocycle}
The cochain $\Theta_b$ is a cocycle. Furthermore, if $X$ is an $n$-dimensional symmetric space of noncompact type with no direct factors isometric to $\mathbb R$, $\mathbb H^2$, $\mathbb H^3$, $\mathrm{SL}_3 \mathbb R / \rso(3)$, $\mathrm{SL}_4 \mathbb R / \rso(4)$ or $\mathrm{Sp}_4 \mathbb R /\mathrm U(2)$, then $\Theta_b$ is bounded.
\end{lemma}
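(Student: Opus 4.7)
The plan is to verify the cocycle property and the boundedness separately, using the fact, established immediately above the lemma, that the straightening commutes with the boundary operator.

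For the cocycle property I would write
\[
\delta\Theta_b(c)\ =\ \omega_X(st_n(\partial c))\ =\ \omega_X(\partial\, st_{n+1}(c))
\qquad\text{for }c\in C^{conic}_{n+1}(\tits),
\]
and show the right-hand side vanishes. When $c\in S_{n+1}(\Omega)$, the straightened simplex $st_{n+1}(c)\colon\Delta^{n+1}\to X$ is $C^1$ (the barycenter depends smoothly on a measure fully supported on $\partial_F X$), so Stokes' theorem together with $d\omega_X=0$ (trivial, since $\omega_X$ is already of top degree on $X$) yields the vanishing. When $c=cone(\tau)$ with $\tau\in S_n(\partial\Omega)$, one has $st_{n+1}(cone(\tau))=cone(st_n(\tilde\tau))$, a non-compact simplex, so I would mimic the truncation argument of Lemma \ref{lem:cocycle}: apply Stokes on $\Delta^n\times[0,t]$, let $t\nearrow 1$, and use the exponential decay factor $e^{-2\|\rho_{\mathbf P}\|(y+s)}$ from (\ref{eqn:volumeformula}) to kill the contribution from $\Delta^n\times\{t\}$; the remaining boundary integrals reassemble into $\omega_X(\partial\, st_{n+1}(cone(\tau)))=0$.

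For boundedness I would estimate $|\Theta_b|$ on each generating type. For $\sigma\in S_n(\Omega)$, Lafont--Schmidt's property (d), that is, the top-dimensional Jacobian bound derived from Theorem \ref{thm:connell-farb}, gives $|\Theta_b(\sigma)|\le C_1\cdot\mathrm{Vol}(\Delta^n)$ with $C_1$ depending only on $X$. For $cone(\tau)$ with $\tau\in S_{n-1}(\partial\Omega)$, I would first extend Lemma \ref{lem:conevol} from horospherical to arbitrary $(n-1)$-simplices in $X$. Concretely, in the rational horocyclic coordinates $(x,y,z)$ the cone map is $(\delta,s)\mapsto(x(\delta),y(\delta)+s,z(\delta))$, and expanding its coordinate Jacobian along the $y$-row produces $\pm\det(\partial_\delta x,\partial_\delta z)$; integration in $s\in[0,\infty)$ of the exponential factor from (\ref{eqn:volumeformula}) then yields the factor $1/(2\|\rho_{\mathbf P}\|)$. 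The orthogonal projection onto the $(x,z)$-hyperplane is a contraction of $(n-1)$-volumes, whence pointwise
\[
h(x,z)\,e^{-2\|\rho_{\mathbf P}\|\,y(\delta)}\,|\det(\partial_\delta x,\partial_\delta z)|\ \le\ |\mathrm{Jac}(st_{n-1}(\tilde\tau))(\delta)|,
\]
and therefore
\[
|\omega_X(cone(st_{n-1}(\tilde\tau)))|\ \le\ \frac{\mathrm{Vol}_{n-1}(st_{n-1}(\tilde\tau))}{2\|\rho_{\mathbf P}\|}.
\]
Corollary \ref{cor1.2}, which is precisely where the hypothesis excluding $\mathbb R$, $\mathbb H^2$, $\mathbb H^3$, $\mathrm{SL}_3\mathbb R/\rso(3)$, $\mathrm{SL}_4\mathbb R/\rso(4)$ and $\mathrm{Sp}_4\mathbb R/\mathrm U(2)$ enters, bounds the right-hand side uniformly in $\tau$.

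The main obstacle is the extension of Lemma \ref{lem:conevol} to $(n-1)$-simplices whose image is not contained in a single horosphere: the straightened simplex $st_{n-1}(\tilde\tau)$ typically leaves the horosphere spanned by its vertices, so the derivation in Lemma \ref{lem:conevol} does not apply verbatim. The projection--contraction inequality above circumvents this, reducing the cone estimate to the codimension-$1$ Jacobian bound supplied by Theorem \ref{thm:codim1Jac}, after which the uniform bound on $|\Theta_b|$ is immediate.
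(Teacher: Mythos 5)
Your argument follows the paper's proof very closely: the cocycle property comes from the commutation $st_n\circ\partial_{n+1}=\partial_{n+1}\circ st_{n+1}$ on $C^{conic}_*(\tits)$ together with the truncation/Stokes argument of Lemma~\ref{lem:cocycle}, and the boundedness reduces to Corollary~\ref{cor1.2} via a cone-volume estimate. You also correctly flag a point the paper glosses over: Lemma~\ref{lem:conevol} is stated and proved only for $\tau\in S_{n-1}(\partial\Omega)$, whose image lies in a single horosphere, whereas the paper invokes it for $st_{n-1}(\tau)$, whose image typically leaves that horosphere; your projection--contraction inequality---bounding the $(x,z)$-Jacobian factor of the cone by the Riemannian $(n-1)$-Jacobian of $st_{n-1}(\tilde\tau)$, using that dropping the $A_{\mathbf P}$-coordinate is an orthogonal projection in the rational horocyclic metric---is the correct way to extend the estimate and close this gap.
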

\begin{proof}
It is easy to check that for a simplex $\sigma' \in S_{n+1}(\Omega)$, $$\delta \Theta_b (\sigma') = \omega_X( st_n(\partial_{n+1} \sigma'))=\omega_X(\partial_{n+1} st_{n+1}(\sigma'))=d\omega_X (st_{n+1}(\sigma'))=0.$$
For a simplex $\tau' \in S_n(\partial \Omega)$, we have
$$\delta \Theta_b (cone(\tau'))=\omega_X(st_{n+1}(\partial_{n+1} cone(\tau')))=\omega_X(\partial_{n+1} cone( st_n(\tau'))).$$
As in the proof of Lemma \ref{lem:cocycle}, we can deduce that $\Theta_b$ is a cocycle. If $X$ is a symmetric space of noncompact type with no direct factors isometric to $\mathbb R$, $\mathbb H^2$, $\mathbb H^3$, $\mathrm{SL}_3 \mathbb R / \rso(3)$, $\mathrm{SL}_4 \mathbb R / \rso(4)$ or $\mathrm{Sp}_4 \mathbb R /\mathrm U(2)$, it follows from Corollary \ref{cor1.2} that the volumes of straightened $(n-1)$-simplices are uniformly bounded from above. By Lemma \ref{lem:conevol}, for any $\tau \in S_{n-1}(\partial \Omega)$,
$$|\Theta_b(cone(\tau)) |=\left| \int_{cone(st_{n-1}(\tau))} \omega_X \right| \leq \frac{\mathrm{Vol}_{n-1}(st_{n-1}(\tau))}{2 \|\rho_{\mathbf P}\|}.$$
Since $\| \rho_{\mathbf P} \|$ is a constant depending only on $\mathbf G$,
the boundedness of $\Theta_b$ follows immediately.
\end{proof}

\begin{lemma}\label{lem:bounded}
The cocycle $\Theta_b$ represents the volume class $[\Theta] \in H^n_{conic}(\tits)$.
\end{lemma}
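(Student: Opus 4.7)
The plan is to exhibit a cochain $\psi : C^{conic}_{n-1}(\tits) \to \mathbb R$ satisfying $\delta\psi = \Theta_b - \Theta$, by pairing the closed form $\omega_X$ against the geodesic-prism chain homotopy from the identity to $st_*$. My first step is to recall this homotopy $h : C_*(\Gamma\backslash X) \to C_{*+1}(\Gamma\backslash X)$, which sends a $k$-simplex $\sigma$ to the $(k+1)$-chain obtained by triangulating $\Delta^k \times [0,1]$ in the standard way and mapping $(\delta,t)$ to the point at parameter $t$ on the geodesic from $\sigma(\delta)$ to $st_k(\sigma)(\delta)$; it satisfies $\partial h + h\partial = st_* - \mathrm{id}$, and by the $G$-equivariance stated in the Remark after Theorem~\ref{thm:connell-farb} it descends $\Gamma$-equivariantly from $X$ to $\Gamma\backslash X$.

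Next I would define $\psi(\sigma) := \omega_X(h(\sigma))$ for $\sigma \in S_{n-1}(\Omega)$ and $\psi(cone(\tau)) := \omega_X(cone(h(\tau)))$ for $\tau \in S_{n-2}(\partial\Omega)$, where ``$cone$'' applied to a chain whose vertices lie on a single horosphere means the formal sum of cones-to-the-associated Tits point $\xi_i$ of the constituent simplices. The first integral is finite because $h(\sigma)$ has compact support in $\Gamma\backslash X$; the second requires a convergence argument in the cusp, to be obtained from the horospherical expression $\omega_X = h(x,z)\,e^{-2\|\rho_{\mathbf P}\|y}\,dx\,dy\,dz$ of~(\ref{eqn:volumeformula}) in the same spirit as Lemma~\ref{lem:conevol}.

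Then I would verify $\delta\psi = \Theta_b - \Theta$ on each of the two types of generators of $C^{conic}_n(\tits)$. For $\sigma \in S_n(\Omega)$ this is immediate from Stokes' theorem and closedness of $\omega_X$: $\Theta_b(\sigma) - \Theta(\sigma) = \omega_X(\partial h(\sigma) + h(\partial\sigma)) = \omega_X(h(\partial\sigma)) = \psi(\partial\sigma)$. For $cone(\tau) \in C^{conic}_n(\tits)$ with $\tau \in S_{n-1}(\partial\Omega)$, I would apply Stokes' theorem to the noncompact chain $cone(h(\tau))$---the boundary contribution at the Tits apex vanishes by exponential decay, exactly as in the proof of Lemma~\ref{lem:cocycle}---and then combine the identity $\partial\, cone(h(\tau)) = cone(\partial h(\tau)) + (-1)^{n+1} h(\tau)$ from~(\ref{eqn:coneboundary}) with $\partial h(\tau) = st_{n-1}(\tau) - \tau - h(\partial\tau)$ to conclude that $\omega_X(cone(st(\tau))) - \omega_X(cone(\tau)) = \omega_X(cone(h(\partial\tau))) + (-1)^n\omega_X(h(\tau))$; this precisely equals $\delta\psi(cone(\tau))$ because $\partial cone(\tau) = cone(\partial\tau) + (-1)^n\tau$.

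The main obstacle is to justify both the finiteness of $\omega_X(cone(h(\tau)))$ and the application of Stokes' theorem on the noncompact chain $cone(h(\tau))$, whose coned bodies dive into the interior of $\Omega$ before being coned out to $\xi_i$. This requires a minor extension of Lemma~\ref{lem:conevol} from simplices contained in a single horosphere to simplices whose vertices (but not whose bodies) lie on a horosphere; the exponential decay of $\omega_X$ in the horospherical coordinate~(\ref{eqn:volumeformula}) is again the crucial input, and one also needs to verify that the boundary integral over the truncation at height $y \to \infty$ decays to zero. Everything else is a formal manipulation of the prism decomposition, the boundary formula~(\ref{eqn:coneboundary}), and closedness of $\omega_X$.
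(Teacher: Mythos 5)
Your proof is correct and follows essentially the same route as the paper: the paper likewise extends the straight-line chain homotopy $H_*$ to the conic complex by setting $H_k(cone(\tau)) := cone(H_{k-1}(\tau))$ and pairs it with $\omega_X$ to produce the primitive $\beta$ with $\Theta_b-\Theta=\delta\beta$. Your treatment is a bit more explicit about the analytic points (finiteness of $\omega_X(cone(h(\tau)))$ and Stokes on the noncompact coned prism), which the paper handles implicitly as in its Lemma~\ref{lem:cocycle}, but the underlying argument is identical.
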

\begin{proof}
It suffices to find an $(n-1)$-cochain $\beta : C_{n-1}^{conic}(\tits) \rightarrow \mathbb R$ satisfying $\Theta_b - \Theta = \delta \beta$.
Let $H_* : C_*(\Gamma\backslash X) \rightarrow C_{*+1}(\Gamma\backslash X)$ be the chain homotopy from the barycentric straightening to the identity, that is constructed by the straight line homotopy between a simplex and its straight simplex. This homotopy satisfies $\partial_{k+1} H_k +H_{k-1} \partial_k = st_k-id$. For each $\tau \in S_{k-1}(\partial \Omega)$, define $H_k(cone(\tau)) \in C_{k+1}(\tits)$ by
$$H_k(cone(\tau)):= cone (H_{k-1}(\tau)).$$
Note that for any simplex occurring in $H_{k-1}(\tau)$, its cone is well defined in the same way that the cone of $st_{k-1}(\tau)$ is defined.
Hence $cone (H_{k-1}(\tau))$ is well defined. Then
\begin{eqnarray*}
\lefteqn{(H_{k-1} \partial_k + \partial_{k+1} H_k)(cone(\tau))} \\
&=& H_{k-1} (\partial_k cone(\tau)) + \partial_{k+1} (H_k (cone(\tau))) \\
&=& H_{k-1} (cone(\partial_{k-1} \tau) + (-1)^k \tau) + \partial_{k+1} cone( H_{k-1}(\tau)) \\
&=& cone(H_{k-2}(\partial_{k-1} \tau)) +(-1)^k H_{k-1}(\tau) + cone(\partial_k H_{k-1}(\tau)) + (-1)^{k+1}H_{k-1}(\tau) \\
&=& cone((H_{k-2}\partial_{k-1} + \partial_k H_{k-1})(\tau)) \\
&=& cone(st_{k-1}(\tau))-cone(\tau) \\
&=& (st_k-id)(cone(\tau)).
\end{eqnarray*}

Now we define $\beta : C_{n-1}^{conic}(\tits) \rightarrow \mathbb R$ by
$$\beta(\zeta) = \omega_X (H_{n-1} (\zeta)) \text{ and } \beta(cone(\eta))= \omega_X ( H_{n-1}(cone(\eta)))$$
for $\zeta \in S_{n-1}(\Omega)$ and $\eta \in S_{n-2}(\partial \Omega)$. Then for $\tau \in S_{n-1}(\partial \Omega)$,
\begin{eqnarray*}
(\Theta_b - \Theta) (cone(\tau)) &=& \omega_X ( (st_n-id)(cone(\tau))) \\
&=& \omega_X ( (H_{n-1} \partial_n + \partial_{n+1} H_n)(cone(\tau))) \\
&=& \omega_X ( H_{n-1}(\partial_n cone(\tau)))\\
&=& \beta (\partial_n cone(\tau)) \\
&=& \delta \beta( cone(\tau)).
\end{eqnarray*}
In the same way, it follows that for any $\sigma \in S_n(\Omega)$, $(\Theta_b - \Theta) (\sigma) =\delta \beta(\sigma)$.
Therefore, we conclude that $\Theta_b - \Theta =\delta \beta$.
\end{proof}

Now we are ready to prove Theorem \ref{thm:Qrank1}.

\begin{proof}[Proof of Theorem \ref{thm:Qrank1}]
Lemmas \ref{lem:boundedcocycle} and \ref{lem:bounded} imply that the comparison map $$H^n_{conic,b}(\tits) \rightarrow H^n_{conic}(\tits)$$ is surjective.
From the duality of the $\ell^1$ and $\ell^\infty$-norms, the $\ell^1$-seminorm of $[\tits]_{conic}$ is strictly positive. By Corollary \ref{cor:conicnorm},
$$0< \| [\tits]_{conic}\| =\|[Dcone(\Omega,\partial \Omega)]_{conic}\|=\|\Omega,\partial \Omega\|(1).$$
Since $\Omega$ is homeomorphic to $\borelserre$, we have $$0<\|\Omega,\partial \Omega\|(1)=\|\borelserre,\partial \borelserre\|(1),$$
Furthermore, by Lemma \ref{lem:keyinequal},
$$0< \|\borelserre,\partial \borelserre\|(1) \leq \| \Gamma \backslash X \|_\mathrm{lf}$$
This completes the proof.
\end{proof}

\begin{proof}[Proof of Theorem \ref{irred}]
We only need to prove the theorem for the cases of $\mathbb H^2$, $\mathbb H^3$ and $\mathrm{SL}_3 \mathbb R / \rso(3)$.
It is well known that any proper rational parabolic subgroup of $\mathrm{SL}_2 \mathbb R$, $\mathrm{SL}_2 \mathbb C$ or $\mathrm{SL}_3 \mathbb R$  is conjugate to a Borel subgroup. Hence in either case, the fundamental group of each connected component of $\partial \borelserre$ is amenable.
According to \cite{LS09-1, BKK, KK12, Michelle}, we have $$\| \borelserre,\partial \borelserre \|= \| \Gamma \backslash X \|_\mathrm{lf}=\| \Gamma \backslash X \|_\mathrm{Lip} =\frac{\mathrm{Vol}(\Gamma\backslash X)}{\|\omega_X\|_\infty}>0,$$
which completes the proof.
\end{proof}

\begin{proof}[Proof of Theorem \ref{thm:degree}]
Gromov \cite{Gr82} proved for an $n$-dimensional smooth manifold $N$ with $\text{Ricci}_N \geq -(n-1)$,
$$\| N\|_\mathrm{lf} \leq (n-1)^n n! \cdot \mathrm{Vol}(N).$$
Let $\Gamma$ be a $\mathbb Q$-rank $1$ lattice in a semisimple Lie group $G$. Let $M=\Gamma \backslash X$ where $X$ is the associated symmetric space with $G$. From Lemma \ref{lem:bounded}, it can be easily seen that there exists a constant $C$ depending on $X$ such that
$$\mathrm{Vol}(\Gamma\backslash X) \leq C \cdot \|\borelserre,\partial \borelserre\|(1) \leq C \cdot \| \Gamma \backslash X \|_\mathrm{lf}.$$
Hence, we get
$$\frac{\mathrm{deg}(f)}{C} \cdot \mathrm{Vol}(M)\leq \mathrm{deg}(f) \cdot \| M \|_\mathrm{lf} \leq \| N \|_\mathrm{lf} \leq (n-1)^n n! \cdot \mathrm{Vol}(N).$$  Since, for a given dimension $n$, there are only finitely many symmetric spaces of noncompact type, the constant $C$ can be chosen in such a way that it depends only on $n$.
\end{proof}

\section{The Jacobian estimate}\label{sec:jacobianbounded}
From now on we dedicate the rest of the paper to proving Theorems \ref{thm:codim1Jac}, \ref{rank2jacobian} and \ref{rank2thm2}.
To estimate the volume of a straightened $k$-simplex, we start by recalling the analysis in \cite[Section 4]{CF03} given by Connell and Farb. We will denote by $$dB_{(x,\theta)} : T_xX \rightarrow \mathbb R \text{ and } DdB_{(x,\theta)} : T_xX \otimes T_xX \rightarrow \mathbb R$$ the $1$-form and the $2$-form, respectively, obtained by differentiating the Busemann function $B(p,x,\theta) : X \rightarrow X$ at the point $x\in X$. Once we fix a base point $p$, we will use the notation $B(p,x,\theta)=B_{(x,\theta)}$.
 We stick to the notation used in Section \ref{barycentric}. Since $st_V(\delta)$ is the unique critical point of $g_{\widehat V (\delta)}$ for a given ordered set $V=\{x_1,\ldots,x_{k+1}\}$, the point $st_V(\delta)$ is characterized by the $1$-form equation
\begin{eqnarray}\label{eqn:dg}
0=D_{x=st_V(\delta)} (g_{\widehat V (\delta)}) = \int_{\partial_F X} dB_{(st_V(\delta),\theta)}(\cdot) \ d \left( \sum_{i=1}^{k+1} a_i^2 \nu(x_i) \right)(\theta).
\end{eqnarray}
Moreover, differentiating Equation (\ref{eqn:dg}) with respect to the directions in $T_\delta \Delta^k_s$, one obtains
\begin{eqnarray}\label{eqn:ddg}
\lefteqn{0=D_\delta D_{x=st_V(\delta)} (g_{\widehat V (\delta)})} \\ &=& \sum_{i=1}^{k+1} 2a_i \langle \cdot, e_i \rangle_\delta \int_{\partial_F X} dB_{(st_V(\delta),\theta)}(\cdot) \ d \left( \nu(x_i) \right)(\theta) \\ & & +\int_{\partial_F X} DdB_{(st_V(\delta),\theta)}(D(st_V)_\delta(\cdot), \cdot) \ d \left( \widehat V(\delta) \right) (\theta).
\end{eqnarray}
Here $\langle v, e_i\rangle_\delta=v_i$ denotes the standard inner product.

Define bilinear symmetric forms $k_\delta$ and $h_\delta$ on $T_{st_V(\delta)}X$ by
\begin{eqnarray*} k_\delta (v,w) &=& \int_{\partial_F X} DdB_{(st_V(\delta),\theta)}(v,w) \ d \left( \widehat V(\delta) \right) (\theta) \\
h_\delta (v,w) &=& \int_{\partial_F X} dB_{(st_V(\delta),\theta)}(v) \ dB_{(st_V(\delta),\theta)}(w) \ d \left( \widehat V(\delta) \right) (\theta).
\end{eqnarray*}

Then for any unit tangent vector $u \in T_\delta \Delta^k_s$ and any vector $v\in T_{st_V(\delta)}X$, we have
\begin{eqnarray} \label{eqn:kh}
\left| k_\delta ( D(st_V)_\delta(u), v) \right|^2 &=& \left| \sum_{i=1}^{k+1} 2a_i \langle u, e_i \rangle_\delta \int_{\partial_F X} dB_{(st_V(\delta),\theta)}(v) \ d \left( \nu(x_i) \right) (\theta) \right|^2 \nonumber \\
&\leq& 4 \left[\sum_{i=1}^{k+1} \langle u, e_i \rangle^2_\delta \right] \left[ \sum_{i=1}^{k+1} a_i^2 \int_{\partial_F X} dB^2_{(st_V(\delta),\theta)}(v) \ d \left( \nu(x_i) \right) (\theta)\right] \nonumber \\ &\leq& 4 h_\delta (v,v). \end{eqnarray}

Let $W=D(st_V)(T_\delta \Delta^k_s)$ be a subspace of $T_{st_V(\delta)}X$. To estimate the $k$-dimensional volume of a straightened $k$-simplex with ordered vertex set $V$, we may assume that $W$ is a $k$-dimensional subspace of $T_{st_V(\delta)}X$. Then $k_\delta$ induces a bilinear symmetric form $k_\delta^W : W \times W \rightarrow \mathbb R$ on $W$. Similarly, $h_\delta$ induces a bilinear symmetric form $h_\delta^W : W \times W \rightarrow \mathbb R$. Define symmetric endomorphisms $K_\delta^W $ and $H_\delta^W $ of $W$ by
$$\langle K_\delta^W(v), w \rangle = k_\delta^W(v,w) \text{ and } \langle H_\delta^W(v), w \rangle = h_\delta^W(v,w)$$ for $v, w \in W$ where $\langle \cdot , \cdot \rangle$ is the inner product on $T_{st_V(\delta)}X$ induced by the symmetric metric on $X$.
Clearly, for all $v, w \in W$, \begin{eqnarray}\label{eqn:w} k_\delta^W(v,w) =k_\delta(v,w)\text{ and }h_\delta^W(v,w)=h_\delta(v,w).\end{eqnarray} Let $\{w_1,\ldots,w_k\}$ be an orthonormal eigenbasis of $W$ for $H_\delta^W$. Such basis exists because $H_\delta^W$ is a positive definite symmetric endomorphism on $W$.
Consider the composition map
$$ \xymatrixcolsep{3.5pc}\xymatrix{
T_\delta \Delta^k_s \ar[r]^-{D(st_V)_\delta} &
W \ar[r]^-{K_\delta^W} & W.
}$$
Let $\{ \tilde u_1,\ldots, \tilde u_k \}$ be the basis of $T_\delta \Delta^k_s$ obtained by pulling back $\{w_1,\ldots,w_k\}$ via the map $K_\delta^W \circ D(st_V)_\delta$. Let $\{ u_1,\ldots, u_k \}$ be the orthonormal basis of $T_\delta \Delta^k_s$ obtained from the basis $\{ \tilde u_1,\ldots, \tilde u_k \}$ by applying the Gram-Schmidt process. The matrix representation of $K_\delta^W \circ D(st_V)_\delta$ with respect to bases $\{u_i\}_{i=1}^k$ and $\{w_i\}_{i=1}^k$ is an upper triangular $k$ by $k$ matrix. Hence by (\ref{eqn:kh}),
\begin{eqnarray*}
\left| \det \left( K_\delta^W \circ D(st_V)_\delta \right) \right|^2 &=& \prod_{i=1}^k \left| \langle K_\delta^W \circ D(st_V)_\delta (u_i), w_i \rangle \right|^2 \\ &=& \prod_{i=1}^k \left| k_\delta^W (D(st_V)_\delta(u_i),w_i) \right|^2 \\ &\leq& 2^{2k} \prod_{i=1}^k h_\delta^W (w_i,w_i) \\ &=& 2^{2k} \det  H^W_\delta .
\end{eqnarray*}
Therefore,
$$\left| \mathrm{Jac}_k(st_V)_\delta \right|^2 \leq 2^{2k} \frac{\det  H^W_\delta }{\left( \det K^W_\delta \right)^2}.$$
If there exists a constant $C>0$ depending only on $X$ such that for any $k$-dimensional subspace $W$ of $T_{st_V(\delta)}X$,
\begin{equation}\label{kjacobian} \frac{\det  H^W_\delta }{\left( \det K^W_\delta \right)^2} \leq C,\end{equation}
one can conclude that the volumes of the straightened $k$-simplices in $X$ are uniformly bounded from above. Hence it suffices to show the following theorem to prove Theorem \ref{thm:codim1Jac}.

\begin{theorem}\label{thm:jacobiancodim1}
Let $X$ be a symmetric space of noncompact type with no direct factors isometric to $\mathbb R$, $\mathbb H^2$, $\mathbb H^3$, $\mathrm{SL}_3 \mathbb R / \rso(3)$, $\mathrm{SL}_4 \mathbb R / \rso(4)$ or $\mathrm{Sp}_4 \mathbb R /\mathrm U(2)$. Let $\mu \in \mathcal M (\partial X)$ be a probability measure fully supported on $\partial_F X$ and let $x\in X$. Let $W$ be any codimension $1$ subspace of $T_xX$. Consider the endomorphisms $K_x^W$ and $H_x^W$ defined on $W$ by
$$\langle K_x^W(w),w \rangle =\int_{\partial_F X} DdB_{(x,\theta)}(w,w) \ d \mu(\theta)$$ and
$$\langle H_x^W(w),w \rangle=\int_{\partial_F X} dB^2_{(x,\theta)}(w) \ d\mu(\theta).$$
Then there is a positive constant $C>0$ depending only on $X$ such that
$$\mathrm{Jac}_x^W(\mu):=\frac{\det H_x^W}{(\det K_x^W)^2}\leq C.$$
\end{theorem}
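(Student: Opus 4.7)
The approach is to bootstrap from Connell--Farb's full-dimensional estimate (Theorem~\ref{thm:connell-farb}) to the codimension-one restriction via a single-direction correction, and then to control the correction by the explicit structure of the Busemann data on the Furstenberg boundary. Let $v \in T_x X$ be a unit vector orthogonal to $W$. Schur's determinant identity gives, for any positive-definite symmetric operator $A$ on $T_x X$, $\det(P_W A P_W|_W) = \det(A)\cdot \langle A^{-1} v, v\rangle$; applied to $A=K_x$ and $A=H_x$ this yields
\[
\frac{\det H_x^W}{(\det K_x^W)^2} \;=\; \frac{\det H_x}{(\det K_x)^2}\cdot \frac{\langle H_x^{-1} v, v\rangle}{\langle K_x^{-1} v, v\rangle^2}.
\]
The first factor is uniformly bounded by a constant $C(X)$ thanks to Theorem~\ref{thm:connell-farb}, so the task is reduced to finding a uniform upper bound for the correction factor $\langle H_x^{-1}v,v\rangle / \langle K_x^{-1}v,v\rangle^2$ over all unit vectors $v$ and all admissible probability measures $\mu$ on $\partial_F X$.

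To analyse the correction I would use the equivariance of the Busemann data in a symmetric space. Identifying $T_xX \cong \mathfrak{p}$ via the Cartan decomposition, fixing a positive Weyl chamber with barycenter direction $b$, and writing every $\theta \in \partial_F X = K\cdot b^+(\infty)$ as $\theta = k\cdot b^+(\infty)$, both $dB_{(x,\theta)}$ and $DdB_{(x,\theta)}$ are the $\mathrm{Ad}(k)$-transforms of the corresponding forms at the fixed base direction $\theta_0 = b^+(\infty)$. The latter have explicit descriptions via the restricted root space decomposition $\mathfrak{p} = \mathfrak{a} \oplus \bigoplus_{\alpha \in \Lambda^+}\mathfrak{p}_\alpha$, and integrating against $\mu$ then presents $K_x$ and $H_x$ as $K$-averages of these root-level building blocks. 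The plan is to combine two ingredients: the elementary pointwise bound $\lambda_{\max}(K_x) \le C_1(X)$, which follows from the uniform pointwise bound on the Busemann Hessian in a symmetric space of noncompact type and gives $\langle K_x^{-1}v,v\rangle \ge 1/C_1(X)$; together with an operator comparison of the form $K_x \le C_2(X) H_x$ (equivalently $H_x^{-1} \le C_2(X) K_x^{-1}$), extracted by averaging against the $K$-structure on $\partial_F X$ and yielding $\langle H_x^{-1}v,v\rangle \le C_2(X)\langle K_x^{-1}v,v\rangle$. Combined, these two inputs give the required uniform upper bound on the correction factor.

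The principal obstacle is the operator comparison $K_x \le C_2 H_x$: its pointwise analogue in $\theta$ fails outright, because the Busemann Hessian $DdB_{(x,\theta)}$ degenerates along the flat through $\theta$, while the Busemann differential $dB_{(x,\theta)}$ does not. The comparison must therefore be produced globally, through the mixing obtained when $\mu$ is averaged over the full $K$-orbit on $\partial_F X$, and it ultimately rests on the combinatorics of the restricted root system. This is precisely why the exclusions $\mathbb{H}^2$, $\mathbb{H}^3$, $\mathrm{SL}_3\mathbb{R}/\rso(3)$, $\mathrm{SL}_4\mathbb{R}/\rso(4)$ and $\mathrm{Sp}_4\mathbb{R}/\mathrm{U}(2)$ appear: in those low-rank and low-dimensional cases the root system is too small for the $K$-averaging to dominate the rank-one $dB \otimes dB$ contribution. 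The argument thus proceeds case by case. I would carry out the prototype calculation in the body of the paper for $X = \mathrm{SL}_{n+1}\mathbb{R}/\rso(n+1)$, where $\mathfrak{p}$ is the space of traceless symmetric matrices and the entire analysis becomes explicit, and defer the analogous calculations for the remaining irreducible families to the appendix.
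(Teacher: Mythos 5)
Your Schur-complement reduction is itself correct: for a positive-definite symmetric operator $A$ on $T_xX$ and $v$ a unit normal to $W$, one indeed has $\det(P_W A P_W|_W) = \det(A)\cdot\langle A^{-1}v,v\rangle$, and hence
\[
\frac{\det H_x^W}{(\det K_x^W)^2} = \frac{\det H_x}{(\det K_x)^2}\cdot\frac{\langle H_x^{-1}v,v\rangle}{\langle K_x^{-1}v,v\rangle^2}.
\]
However, the correction factor is \emph{not} uniformly bounded, and the operator comparison $K_x\le C_2 H_x$ on which you rely to control it is false, so the argument cannot be completed as proposed. Take $X=\mathbb H^n$ with $n\ge 4$ (so $\partial_F X=\partial X$ and $DdB_{(x,\theta)}=g-dB_{(x,\theta)}\otimes dB_{(x,\theta)}$, giving $K_x+H_x=\mathrm{Id}$), and let $\mu$ concentrate near a single point $\theta_0$. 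Then $H_x$ has one eigenvalue tending to $1$ and the rest tending to $0$ (subject to $\mathrm{tr}\,H_x=1$), while $K_x=\mathrm{Id}-H_x$. Choosing $v$ to be an eigenvector of $H_x$ with small eigenvalue $h_1\to 0$ gives $\langle H_x^{-1}v,v\rangle = 1/h_1\to\infty$ while $\langle K_x^{-1}v,v\rangle = 1/(1-h_1)\to 1$, so the correction factor blows up; and likewise $\langle K_xv,v\rangle=1-h_1\not\le C_2\langle H_xv,v\rangle=C_2h_1$. Note that ``fully supported'' does not preclude concentration, and the measures $\widehat V(\delta)=\sum a_i^2\,\nu(\sigma(e_i))$ occurring in the barycentric straightening really do concentrate when the vertices run off to infinity, so this degeneration must be handled.

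The theorem nonetheless holds because the blow-up of the correction factor is exactly cancelled by the decay of the full Jacobian $\det H_x/(\det K_x)^2$ along the same degenerating sequence: the bound is genuinely a bound on the \emph{product}, and the two factors cannot be controlled separately. For this reason the paper does not bootstrap from Connell--Farb's full-rank estimate. Instead it reruns Connell--Farb's eigenvalue-matching argument \emph{inside} $W$: one diagonalizes $K_x^W$ rather than $K_x$, isolates the eigendirections of $K_x^W$ with small eigenvalues, and invokes a codimension-one version of the weak eigenvalue matching theorem (Theorem~\ref{thm:eigenmatching2}). To ensure that, for each bad direction $v_i$, two orthonormal companions $v_i',v_i''$ satisfying the angle inequality can be found \emph{inside the codimension-one subspace $W$}, Connell--Farb's requirement of producing a $2r$-frame is strengthened to producing a $3r$-frame (Property~\textbf{E}), so that the three-dimensional $P_i\subset Q_{v_i}$ still meets $W$ in a space of dimension at least two. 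Verifying Property~\textbf{E} through root-system combinatorics is the real work of the proof, and it is precisely here that the extra excluded factors $\mathrm{SL}_4\mathbb R/\rso(4)$ and $\mathrm{Sp}_4\mathbb R/\mathrm U(2)$ arise (with $\mathrm{SL}_5\mathbb R/\rso(5)$ handled by a separate ad hoc argument).
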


The above theorem is an analogue of Connell and Farb's Theorem \ref{thm:connell-farb} for codimension $1$ subspaces.
Before giving a proof of Theorem \ref{thm:jacobiancodim1}, we briefly sketch the proof of Connell and Farb in \cite{CF03} in the next section for reader's convenience.

\section{The strategy of Connell and Farb's Proof}

We stick to the notations used in Theorem \ref{thm:connell-farb}. Set $r=\mathrm{rank}(X)$.
Let $\mathcal F = A x$ be the canonical maximal flat through $x \in X$. Let $K$ be a maximal compact subgroup stabilizing $x$. Choose an orthonormal basis $\{e_i \}_{i=1}^n$ for the tangent space $T_xX$ such that $e_1,\ldots,e_r$ is a basis for $\mathcal F$ with $e_1(\infty)=b^+(\infty)$. Recall that $b$ is the barycenter of the positive Weyl chamber.
Let $v_\theta$ denote the vector at $x$ with $v_\theta(\infty)= \theta$ and let $O_\theta$ be the orthogonal matrix corresponding to the derivative of an isometry in $K$ which sends $e_1$ to $v_\theta$.
Then with respect to the chosen orthonormal basis $\{e_i \}_{i=1}^n$, one may write the terms $\langle K_x(v), v \rangle $ and $\langle H_x(v),v \rangle$ in matrix form as $$\langle K_x(v),v \rangle =\int_{\partial_F X} v^t O_\theta \left( \begin{array}{cc} O_{r \times r} & O_{r \times (n-r)} \\ O_{(n-r)\times r} & D_\lambda \end{array} \right) O_\theta^t v \ d\mu (\theta),$$ and
$$\langle H_x(v),v \rangle=\int_{\partial_F X} v^t O_\theta \left( \begin{array}{cc} 1 & O_{(n-1)\times 1} \\ O_{1\times (n-1)} & O_{(n-1)\times (n-1)} \end{array} \right) O_\theta^t v \ d\mu(\theta).$$
Here, $O_{l\times m}$ denotes the $l$ by $m$ zero matrix and $D_\lambda$ is a diagonal matrix $\mathrm{Diag}(\lambda_1,\ldots,\lambda_{n-r})$. Note that $\{\lambda_1,\ldots,\lambda_{n-r} \}$ is the set of nonzero eigenvalues of $DdB_{(x, b^+(\infty))}$. Furthermore, there is a constant $c>0$ only depending on $X$ such that $$ \frac{h(g_0)}{c} \leq \lambda_i \leq c h(g_0)$$
for all $i=1,\ldots,n-r$. For more details, see \cite[Section 4.3]{CF03}.

\subsection{The Jacobian estimate}
Consider the endomorphism $Q_x$ of $T_xX$ defined by
$$\langle Q_x(v),v \rangle =\int_{\partial_F X} v^t O_\theta \left( \begin{array}{cc} O_{r \times r} & O_{r \times (n-r)} \\ O_{(n-r)\times r}& I_{n-r} \end{array} \right) O_\theta^t v \ d \mu (\theta),$$
where $I_{n-r}$ is the identity matrix of size $n-r$.
Let $D=\frac{h(g_0)}{c}$. Then it is easy to see that \begin{eqnarray}\label{eqn:Q} \langle K_x(v),v \rangle \geq D\langle Q_x(v),v \rangle \end{eqnarray} for all $v\in T_xX$.
From the matrix forms of $K_x$, $H_x$ and $Q_x$ above, it follows that $K_x$, $H_x$ and $Q_x$ are all positive symmetric matrices.

The matrix forms of $H_x$ and $Q_x$ can be reformulated in terms of the angle between two subspaces of $T_xX$. Consider the bi-invariant metric $d_{\mathrm{SO}(n)}$ on $\mathrm{SO}(n)$ such that $\mathrm{SO}(n)$ has diameter $\pi/2$. Then the angle between two subspaces $V, W \subset T_xX$ is defined as
$$\angle(V,W):=\inf \{ d_{\mathrm{SO}(n)}(I, P) \ | \ P\in \mathrm{SO}(n) \text{ with }PV\subset W \text{ or } PW \subset V \}.$$
Then for a unit vector $v$,  $\langle Q_x(v),v \rangle$ is written as
$$\langle Q_x(v),v \rangle=\int_{\partial_F X} \sum_{j=r+1}^n \langle O_\theta^t v, e_i \rangle^2 \ d\mu (\theta) =\int_{\partial_F X} \sin^2 \angle(O_\theta^t v, \mathcal F) \ d\mu(\theta).$$
Furthermore, observing that $$ \langle v, e_1 \rangle^2 \leq \sum_{j=1}^r \langle v, e_j \rangle^2 = \sin^2 \angle (v,\mathcal F^\bot),$$
$\langle H_x(v),v \rangle$ satisfies
$$\langle H_x(v),v \rangle=\int_{\partial_F X} \langle O_\theta^t v, e_1 \rangle^2 \ d\mu (\theta) \leq \int_{\partial_FX} \sin^2 \angle(O_\theta^tv,\mathcal F^\bot) \ d\mu(\theta),$$

Note that $tr(Q_x)=n-r$ and $\langle Q_x(v),v \rangle \leq 1$ for all unit vectors $v \in T_xX$. From these facts, it follows that the number of those eigenvectors of $Q_x$ with eigenvalues strictly less than $1/(r+1)$ can not exceed $r$. Hence, by (\ref{eqn:Q}), the number of those eigenvectors of $K_x$ with eigenvalues strictly less than $D/(r+1)$ can not exceed $r$ either. The key idea of Connell and Farb in proving Theorem \ref{thm:connell-farb} is to prove the weak eigenvalue matching Theorem in \cite{CF14}. Indeed there was a mistake in \cite[Theorem 1.4]{CF03}, which was pointed out by us. Recently
Connell and Farb fixed the mistake in \cite{CF14}.

\begin{theorem}[Weak Eigenvalue Matching, \cite{CF14}] \label{thm:eigenmatching} Let $X$ be a symmetric space of noncompact type with no direct factors isometric to $\mathbb R$, $\mathbb H^2$ or $\mathrm{SL}_3 \mathbb R / \mathrm{SO}(3)$. Then there are constants $C_1$ and $C$ so that the following holds. Given any $\epsilon < 1/(\mathrm{rank}(X)+1)^2$, for any orthonormal $k$-frame $v_1,\ldots,v_k$ in $T_xX$ with $k\leq \mathrm{rank}(X)$, whose span $V$ satisfies $\angle (V,\mathcal F)\leq \epsilon$ there is a $C_1 \epsilon$-orthonormal $2k$-frame given by vectors $v_1', v_1'',\ldots,v_k',v_k''$, such that for $i=1,\ldots,k$: $$\angle (hv_i',\mathcal F^\bot)\leq C\angle (hv_i,\mathcal F)$$ and $$\angle (hv_i'',\mathcal F^\bot)\leq C\angle (hv_i,\mathcal F)$$  for every $h \in K$, where $hv$ is the linear (derivative) action of $K$ on $v\in T_xX$.
\end{theorem}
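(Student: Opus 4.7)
The plan is to build, for each vector in $\mathcal{F}$ close to the given frame, a pair of companion vectors in $\mathcal{F}^\perp$ whose behavior under the isotropy $K$-action mirrors the original, the two companions being essential precisely because any single companion fails on a positive-measure locus in $K$.

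\textbf{Step 1 (Reduction to vectors in $\mathcal{F}$).} Using $\angle(V,\mathcal{F})\leq\epsilon$, I would replace each $v_i$ by its normalized orthogonal projection $\bar v_i := \pi_{\mathcal{F}}(v_i)/\|\pi_{\mathcal{F}}(v_i)\| \in \mathcal{F}$, so that $\|v_i-\bar v_i\|=O(\epsilon)$ and $\{\bar v_i\}$ is $O(\epsilon)$-orthonormal in $\mathcal{F}$. Since $u \mapsto \angle(hu,W)$ is $1$-Lipschitz uniformly in $h$ and in the subspace $W$, producing the pair $(\bar v_i', \bar v_i'')$ for each $\bar v_i$ and correcting by $O(\epsilon)$ yields the claim for $v_i$ after enlarging $C$. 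This reduces the problem to constructing, for each unit $H \in \mathcal{F}$, orthonormal vectors $H', H'' \in \mathcal{F}^\perp$ such that
\[
\angle(hH',\mathcal{F}^\perp) \leq C\,\angle(hH,\mathcal{F}), \qquad \angle(hH'',\mathcal{F}^\perp) \leq C\,\angle(hH,\mathcal{F})
\]
for every $h \in K$, with the joint collection obtained from a basis of $\mathcal{F}$ remaining $O(\epsilon)$-orthonormal.

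\textbf{Step 2 (Root-space construction).} Identify $T_xX = \mathfrak{p}$ and use $\mathfrak{p} = \mathcal{F} \oplus \bigoplus_{\alpha \in \Lambda^+}\mathfrak{p}_\alpha$ with $\mathcal{F}^\perp = \bigoplus_\alpha \mathfrak{p}_\alpha$. For a fixed basis $\{H_\beta\}_{\beta=1}^r$ of $\mathcal{F}$, I would select for each $\beta$ a $2$-dimensional subspace $W_\beta \subset \mathcal{F}^\perp$ spanned by orthonormal $e_\beta', e_\beta''$, with the $W_\beta$ pairwise orthogonal. Such a choice exists precisely when $\dim \mathcal{F}^\perp \geq 2\cdot\mathrm{rank}(X)$, which among irreducible symmetric spaces of noncompact type excludes exactly $\mathbb{R}$, $\mathbb{H}^2$, and $\mathrm{SL}_3\mathbb{R}/\mathrm{SO}(3)$, matching the hypothesis. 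Concretely, for a root $\alpha$ with $\alpha(H_\beta)\neq 0$ I would pick $e_\beta'$ inside $\mathfrak{p}_\alpha$ and $e_\beta''$ as its image under a generator of an $M$-circle action on a two-dimensional isotypic component (here $M = Z_K(\mathcal{F})$), then define $H' = \sum c_\beta e_\beta'$ and $H'' = \sum c_\beta e_\beta''$ when $H=\sum c_\beta H_\beta$, with a small Gram--Schmidt correction.

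\textbf{Step 3 (Uniform duality estimate).} The required control reduces to the quadratic inequality
\[
\sin^2\angle(he_\beta',\mathcal{F}^\perp) + \sin^2\angle(he_\beta'',\mathcal{F}^\perp) \leq C^2 \sin^2\angle(hH_\beta,\mathcal{F})
\]
for all $h\in K$. For $h = \exp(Y)$ near the identity, a first-order expansion using $[\mathfrak{k},\mathfrak{p}_\alpha] \subset \mathfrak{p}$ gives $\pi_\mathcal{F}(he_\beta') \approx \pi_\mathcal{F}([Y,e_\beta'])$ and $\pi_{\mathcal{F}^\perp}(hH_\beta) \approx [Y,H_\beta]$; expanding $Y$ along the $M$-isotypic decomposition of $\mathfrak{k}/\mathfrak{m}$ and using the natural pairing between $\mathfrak{a}$-elements and root vectors via structure constants produces the infinitesimal inequality with an explicit constant. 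Passage to all of $K$ follows from compactness: the inequality holds open-neighborhoodwise at every $h$, and a finite subcover produces the global $C$.

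\textbf{Main obstacle.} The hardest part will be the uniform estimate of Step 3. Any single fixed companion $e_\beta'$ will fail on the closed subvariety $\{h \in K : he_\beta' \in \mathcal{F}\}$, which is generally nonempty. The theorem demands \emph{two} companions exactly to overcome this: when the pair is chosen transversally inside its $M$-isotypic component the two bad loci are disjoint, so the sum of squared sines on the left dominates the right side uniformly. Establishing this transversality, quantifying the joint constant $C$ over all irreducible factors, and handling the rank-$2$ symmetric spaces on the boundary of the hypothesis is the technical crux, and is where the flaw noted by Kim--Kim in \cite[Theorem 1.4]{CF03} had to be repaired in \cite{CF14}.
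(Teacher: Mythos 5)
Your Step 1 reduction to frames in $\mathcal F$ matches the paper's account of the Connell--Farb proof (which moves each $v_i$ onto $\mathcal F$ by a small element of $K$). Step 3's idea of a Lie-algebraic/infinitesimal estimate is in the right spirit for the paper's key ``Angle Inequality'' lemma, which asserts $\angle(hw,\mathcal F^\bot)\leq C\angle(hv,\mathcal F)$ for any $w\in Q_v := (\mathrm{span}\{K_v\cdot\mathcal F\})^\bot$ and any $h\in K$. However, the proposal has two genuine gaps and one misdiagnosis.

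The central gap is in Step 2. You assert that pairwise-orthogonal $2$-planes $W_\beta\subset Q_{H_\beta}$ exist ``precisely when $\dim\mathcal F^\perp\geq 2\cdot\mathrm{rank}(X)$.'' That dimension count is necessary but emphatically not sufficient: for a given frame the subspaces $Q_{H_\beta}$ can overlap substantially (indeed for maximally singular $H_\beta$ they often do), so one cannot simply pick a $2$-plane in each and invoke Gram--Schmidt. Whether an orthonormal $2k$-frame respecting the constraints $v_i',v_i''\in Q_{v_i}$ exists is precisely the combinatorial matching problem that forms the heart of \cite{CF14}, and it is the issue the present paper encodes as ``property {\bf E}'' and attacks with a greedy row-by-row algorithm on the associated $\{0,1\}$-incidence matrix. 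Your ``small Gram--Schmidt correction'' glosses over the actual difficulty the theorem must overcome.

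Second, the ``main obstacle'' paragraph misidentifies why two companions are needed. The angle inequality is a statement about a \emph{single} $w\in Q_v$ and holds uniformly over $h\in K$ with no exceptional locus; one does not need a pair to patch bad sets in $K$. The real reason two companions appear is algebraic, in the Jacobian estimate: one bounds $\det H_x$ by $\prod_i\langle H_x(v_i'),v_i'\rangle\langle H_x(v_i''),v_i''\rangle$, and each factor contributes one power of $\langle K_x(v_i),v_i\rangle$, so two factors per index are required to produce the $(\det K_x)^2$ in the denominator. Finally, the closing compactness argument in Step 3 is circular as stated: establishing the inequality ``open-neighborhoodwise at every $h$'' is exactly what has to be proved, and a first-order expansion at the identity does not by itself give it at a general $h$. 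A correct proof of the angle inequality has to exploit the structure of $Q_v$ as a sum of root spaces $\mathfrak p_\alpha$ with $\alpha(v)\neq 0$ and the $K$-equivariance more carefully, as in \cite{CF14}.
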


A set of vectors $\{w_1,\ldots,w_k\}$ is called \emph{$\delta$-orthonormal $k$-frame} if $\langle w_i, w_j\rangle <\delta$ for all distinct $i,j$.
One may assume that $C>1$. Put $\epsilon =1/2(r+1)^2$.
Let $v_1,\ldots,v_k$ be the eigenvectors of $K_x$ with eigenvalues strictly less than $D\epsilon$. As mentioned before, $k\leq \mathrm{rank}(X)$ since $\epsilon < 1/(r+1)$. Then due to the weak eigenvalue matching theorem above, there is an $C_1\epsilon$-orthonormal $2k$-frame given by vectors $v_1',v_1'',\ldots,v_k',v_k''$ of $T_xX$ as in Theorem \ref{thm:eigenmatching}. From the weak eigenvalue matching theorem together with the concavity of $\sin^2 \alpha$ for $0 \leq \alpha \leq \pi/2$ gives,
for all $\theta \in \partial_F X$ and each $v_i'$, that
$$\sin^2 \angle (O_\theta^t v_i', \mathcal F^\bot) \leq \sin^2 C\angle(O_\theta^t v_i,\mathcal F) \leq C^2 \sin^2 \angle(O_\theta^t v_i,\mathcal F).$$
This implies that for each $v_i'$,
\begin{eqnarray}\label{eqn:HK}
\langle H_x (v_i'), v_i' \rangle & \leq &\int_{\partial_FX} \sin^2 \angle(O_\theta^tv_i',\mathcal F^\bot) \ d\mu(\theta) \\
&\leq & \int_{\partial_FX} C^2 \sin^2 \angle(O_\theta^tv_i,\mathcal F) \ d\mu(\theta) \nonumber \\
&=& C^2 \langle Q_x (v_i),v_i \rangle  \nonumber \\
&\leq& \frac{C^2}{D}\langle K_x(v_i), v_i\rangle. \nonumber
\end{eqnarray}
The last inequality comes from (\ref{eqn:Q}). All the above inequalities hold for each $v_i''$ as well.

The above inequality makes it possible to obtain a uniform bound on $\mathrm{Jac}_x(\mu)$ as follows:
Let $v_1,\ldots,v_n$ be an orthonormal eigenbasis of $K_x$ such that $\langle K_x(v_i),v_i \rangle < D\epsilon$ for all $i=1,\ldots,k$ and $\langle K_x(v_i),v_i \rangle \geq D\epsilon$ for all $i=k+1,\ldots, n$.
Noting that $\langle H_x(v),v \rangle \leq 1$ for all unit vectors $v \in T_xX$, we have that \begin{eqnarray*}
\det H_x &\leq& C' \prod_{i=1}^k \langle H_x(v_i'), v_i' \rangle \cdot \langle H_x(v_i''), v_i'' \rangle \\
&\leq& C' C^{4k}D^{-2k} \prod_{i=1}^k \langle K_x(v_i), v_i \rangle^2 \\
&\leq& C' C^{4k}D^{-2k} \prod_{i=k+1}^n \langle K_x(v_i), v_i \rangle^{-2} (\det K_x)^2 \\
&\leq& C' C^{4r}D^{-2n}(\epsilon^{-1})^{2n}  (\det K_x)^2
\end{eqnarray*}
where $C'=1/(1-C_1\epsilon)^{4k}$ and $\epsilon^{-1}=2(1+r)^2$.
Since the constant in front of $(\det K_x)^2$ depends only on $X$, Theorem \ref{thm:connell-farb} follows.

\subsection{Weak Eigenvalue Matching Theorem}

As we saw in the previous section, the weak Eigenvalue Matching Theorem is crucial for the estimate of the Jacobian $\mathrm{Jac}_x(\mu)$. In this section, we briefly describe what are key points in proving the weak eigenvalue matching theorem.

Let $v$ be a vector in $\mathcal F$ and $K_v$ denote the stabilizer subgroup of $v$ in $K$.
Define the subspace $$Q_v := (\mathrm{span}\{K_v \cdot \mathcal F\})^\bot.$$
Then the first key lemma is the following angle inequality.

\begin{lemma}[Angle Inequality, \cite{CF14}]
There exists a constant $C>0$, depending only on $\dim(X)$, so that for any $w \in Q_i$ and any $h \in K$
\begin{eqnarray}\label{angleinequal} \angle (hw, \mathcal F^\bot) \leq C \angle(hv, \mathcal F) \end{eqnarray}
where $h$ acts via the derivative action of $K$ on $v \in T_xX$.
\end{lemma}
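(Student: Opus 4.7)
The plan is to transport the inequality to the Lie algebra, perform a first-order Taylor expansion near the ``coincidence locus'' $\{h\in K:hv\in\mathcal{F}\}$, and handle the non-local regime by compactness. Via the Cartan decomposition $\mathfrak{g}=\mathfrak{k}\oplus\mathfrak{p}$, I would identify $T_xX\simeq\mathfrak{p}$, $\mathcal{F}\simeq\mathfrak{a}$, and the $K$-action with $\mathrm{Ad}$, then use the restricted root decomposition
\begin{equation*}
\mathfrak{p}=\mathfrak{a}\oplus\bigoplus_{\alpha\in\Sigma^+}\mathfrak{p}_\alpha,\qquad \mathfrak{k}=\mathfrak{k}_0\oplus\bigoplus_{\alpha\in\Sigma^+}\mathfrak{k}_\alpha.
\end{equation*}
The bracket relation $[\mathfrak{k}_\alpha,H]=\alpha(H)\phi_\alpha(\cdot)$ for an isometric isomorphism $\phi_\alpha:\mathfrak{k}_\alpha\to\mathfrak{p}_\alpha$ gives $\mathfrak{k}_v=\mathfrak{k}_0\oplus\bigoplus_{\alpha(v)=0}\mathfrak{k}_\alpha$, and since $[\mathfrak{k}_v,\mathfrak{a}]=\bigoplus_{\alpha(v)=0}\mathfrak{p}_\alpha$, the connected-component contribution to $V=\mathrm{span}(K_v\cdot\mathcal{F})$ is $\mathfrak{a}\oplus\bigoplus_{\alpha(v)=0}\mathfrak{p}_\alpha$; hence $Q_v=\bigoplus_{\alpha(v)\neq 0}\mathfrak{p}_\alpha\subset\mathcal{F}^\perp$.

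\textbf{Local Taylor expansion.} For $h\in K_v$, $\mathrm{Ad}(h)$ preserves $V$ and hence $Q_v$, so $hw\in Q_v\subset\mathcal{F}^\perp$ and both sides of the inequality vanish. Near $K_v$ I would write $h=\exp(X)k_0$ with $k_0\in K_v$ and $X\in\mathfrak{k}_v^\perp$ small, and expand
\begin{equation*}
hv=v+[X,v]+O(\|X\|^2),\qquad hw=k_0w+[X,k_0w]+O(\|X\|^2),
\end{equation*}
with $k_0w\in Q_v\subset\mathcal{F}^\perp$. Since $[X,v]\in\mathcal{F}^\perp$, the leading-order angles satisfy $\angle(hv,\mathcal{F})\asymp\|[X,v]\|/\|v\|$ and $\angle(hw,\mathcal{F}^\perp)\asymp\|[X,k_0w]_\mathfrak{a}\|/\|w\|$. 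Decomposing $X=\sum X_\alpha$ and $k_0w=\sum w_\alpha$, the bracket formulas give $\|[X,v]\|^2=\sum_\alpha\alpha(v)^2\|X_\alpha\|^2$ and $\|[X_\alpha,w_\alpha]_\mathfrak{a}\|\lesssim\|\alpha\|\,\|X_\alpha\|\,\|w_\alpha\|$, which via Cauchy--Schwarz yields the desired linear bound of $\|[X,k_0w]_\mathfrak{a}\|$ by a constant multiple of $\|[X,v]\|\,\|w\|$.

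\textbf{Global bound and main obstacle.} The coincidence locus $\{h\in K:hv\in\mathcal{F}\}$ decomposes into Weyl translates of $K_v$; at each such translate the identical local analysis applies after conjugating by a representative of $N_K(\mathfrak{a})$. Away from a tubular neighborhood of this locus, compactness of $K$ forces $\angle(hv,\mathcal{F})$ to be bounded below by a positive constant depending only on $X$, making the inequality automatic there. The main obstacle is ensuring that the local constant is uniform in $v$: the naive root estimate produces a factor $1/\min_{\alpha(v)\neq 0}|\alpha(v)|$ that degenerates as $v$ approaches a wall of the Weyl chamber. Resolving this (the essential content of \cite{CF14}) should proceed by tracking the interplay between $v$ approaching a wall $\alpha=0$ and the forced disappearance of $\mathfrak{p}_\alpha$ from $Q_v$ in the limit; the exclusion of the low-rank factors in the hypothesis is precisely what furnishes the cancellation needed to absorb the potential blow-up, yielding a constant $C$ depending only on $\dim(X)$.
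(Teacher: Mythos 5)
The paper does not prove this lemma; it cites it directly from \cite{CF14} and uses it as a black box, so there is no in-house argument to compare against. Assessed on its own merits, your Lie-algebraic setup and first-order Taylor expansion are correct, and you have put your finger on exactly the right obstacle: the naive local estimate produces a constant proportional to $\max\{\|\alpha\|/|\alpha(v)| : \alpha(v)\neq 0\}$, which can blow up as $v$ drifts toward a wall.

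The resolution you sketch, however, is wrong on two counts. First, the ``exclusion of low-rank factors'' is not a hypothesis of this lemma: those exclusions ($\mathbb{R}$, $\mathbb{H}^2$, $\mathrm{SL}_3\mathbb{R}/\mathrm{SO}(3)$) appear only in the Weak Eigenvalue Matching Theorem, where they are needed to guarantee $\dim Q_v\geq 2$, and play no role in the constant of the angle inequality. Second, the claimed ``cancellation'' --- $\mathfrak{p}_\alpha$ dropping out of $Q_v$ as $v$ approaches the wall $\alpha=0$ --- does not happen continuously: $\mathfrak{p}_\alpha\subset Q_v$ whenever $\alpha(v)\neq 0$, so for $v$ near-but-not-on the wall $Q_v$ still contains $\mathfrak{p}_\alpha$ while $|\alpha(v)|$ is tiny, and the inequality genuinely fails with any uniform constant. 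For instance in $\mathrm{SL}_3\mathbb{R}$ take $v=\mathrm{diag}(1+\epsilon,1-\epsilon,-2)$, a unit $w\in\mathfrak{p}_{\alpha_{12}}$, and $h=\exp\bigl(t(E_{12}-E_{21})\bigr)$; then $\angle(hv,\mathcal{F})\asymp t\epsilon$ while $\angle(hw,\mathcal{F}^\perp)\asymp t$, so the ratio scales like $1/\epsilon$. The actual fix in \cite{CF14}, and the way the lemma is used throughout, is that it is applied only to singular vectors $v$ --- ultimately maximally singular ones --- whose combinatorial type fixes $Q_v$; for each such type $\min_{\alpha(v)\neq 0}|\alpha(v)|/\|v\|$ is bounded below by a constant of the root system, and there are finitely many types, which is what actually delivers a constant depending only on $\dim X$. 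So the gap you flagged is real, but you would need to restrict the class of $v$ rather than appeal to the low-rank exclusions.
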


From the above lemma, one can easily notice that in the weak eigenvalue matching theorem, $v_i'$ and $v_i''$ will be orthonormal vectors contained in $Q_{v_i}$. In fact, since $\dim(Q_{v}) \geq 2$ for any $v \in T_xX$, one can always choose such two orthonormal vectors $v_i'$ and  $v_i''$. However for a given orthonormal frame $v_1,\ldots,v_k$, it is not clear that it is possible to choose two orthonormal vectors $v_i' , v_i'' \in Q_{v_i}$ for each $v_i$ so that $v_1',v_1'',\ldots, v_k',v_k''$ is an almost orthonormal $2k$-frame. In \cite{CF14}, Connell and Farb first show that this is possible for any frame of $\mathcal F$. More precisely, they prove the following:

\vspace{2.5mm}

\textit{For any $k$-frame $v_1,\ldots,v_k$ of $\mathcal F$, there is an orthonormal $2k$-frame given by $v_1',v_1'',\ldots v_k',v_k''$, such that for $i=1,\ldots,k$, $$\angle (hv_i',\mathcal F^\bot)\leq C\angle (hv_i,\mathcal F) \text{ and } \angle (hv_i'',\mathcal F^\bot)\leq C\angle (hv_i,\mathcal F).$$ }
In fact, it is sufficient to show the above statement in order to prove the weak eigenvalue matching theorem in the following reason.
Briefly speaking, if an orthonormal frame $v_1,\ldots,v_k$ of $V$ with $\angle (V,\mathcal F) \leq \epsilon$ is given, then
one can move each $v_i$ a little to a vector $w_i$ in $\mathcal F$ via an element of $K$ such that $w_1,\ldots,w_k$ is a $k$-frame of $\mathcal F$. Applying the above statement to the $k$-frame $w_1,\ldots,w_k$, one obtain $2k$-orthonormal frame satisfying the angle inequalities for $w_1,\ldots,w_k$. To get the angle inequalities for $v_1,\ldots,v_k$, we need to move each vector of the $2k$-orthonormal frame a little again. Then we finally obtain a $C_1\epsilon$-orthonormal $2k$-frame satisfying the angle inequalities for $v_1,\ldots,v_k$. For a detailed proof, see \cite[Section 2]{CF14}.

\section{Proof of Theorem \ref{thm:jacobiancodim1}.}

In this section we prove Theorem \ref{thm:jacobiancodim1}.
We will see that it suffices to prove the following weak eigenvalue matching theorem for codimension $1$ subspaces.

\begin{theorem}\label{thm:eigenmatching2} Let $X$ be a symmetric space of noncompact type with no direct factors isometric to $\mathbb R$, $\mathbb H^2$, $\mathbb H^3$, $\mathrm{SL}_3 \mathbb R / \rso(3)$, $\mathrm{SL}_4 \mathbb R / \rso(4)$ or $\mathrm{Sp}_4 \mathbb R /\mathrm U(2)$. Then there are constants $C_1$ and $C$ depending only on $X$ so that the following holds. Given any $\epsilon<1/(\mathrm{rank}(X)+1)^2$, for any orthonormal $k$-frame $v_1,\ldots,v_k$ in any codimension $1$ subspace $W$ of $T_xX$ with $k\leq \mathrm{rank}(X)$, whose span $V$ satisfies $\angle (V, \mathcal F)\leq \epsilon$, there is a $C_1\epsilon$-orthonormal $2k$-frame given by vectors $v_1',v_1'',\ldots,v_k',v_k''$ in $W$, such that for $i=1,\ldots, k$, $$\angle (hv_i',\mathcal F^\bot ) \leq C \angle (hv_i, \mathcal F)$$ and $$\angle (hv_i'',\mathcal F^\bot ) \leq C \angle (hv_i, \mathcal F)$$
for every $h \in K$, where $hv$ is the linear (derivative) action of $K$ on $v\in T_x X$.
\end{theorem}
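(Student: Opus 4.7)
The plan is to adapt the proof of Theorem \ref{thm:eigenmatching} in \cite{CF14}, keeping all constructed vectors inside the codimension $1$ subspace $W$. As a first step I would reduce to the case in which the frame lies in $\mathcal F\cap W$. Since $\angle(V,\mathcal F)\leq \epsilon$, there is a rotation $h_0\in K$ of size $O(\epsilon)$ sending $V$ into $\mathcal F$; applying $h_0$ to the $v_i$ and then orthogonally projecting onto $\mathcal F\cap W$ (which has codimension at most $1$ in $\mathcal F$) produces, after a Gram--Schmidt step, a $k$-frame $w_1,\ldots,w_k\subset\mathcal F\cap W$ differing from the $v_i$ by $O(\epsilon)$. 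All constants obtained below will be pulled back through this perturbation at the end, absorbing the $O(\epsilon)$ losses into the final constants $C_1$ and $C$.

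With the $w_i\in\mathcal F\cap W$ in hand, the Angle Inequality Lemma tells us that every $u\in Q_{w_i}$ satisfies $\angle(hu,\mathcal F^\bot)\leq C\,\angle(hw_i,\mathcal F)$ for all $h\in K$. It therefore suffices to produce an orthonormal $2k$-frame $w_1',w_1'',\ldots,w_k',w_k''$ with $w_i',w_i''\in Q_{w_i}\cap W$. Following \cite[Section 2]{CF14} I would do this by a greedy selection, choosing the $2k$ vectors in turn from the appropriate $Q_{w_i}\cap W$ and orthogonal to the previously chosen ones. Because $W$ has codimension $1$ in $T_xX$, one has $\dim(Q_{w_i}\cap W)\geq \dim Q_{w_i}-1$, and the greedy step at position $j$ succeeds provided $\dim(Q_{w_i}\cap W)$ exceeds the number $\leq 2k-1\leq 2\,\mathrm{rank}(X)-1$ of already fixed vectors.

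The main obstacle, and the reason for the three additional exclusions beyond those in Theorem \ref{thm:eigenmatching}, is a uniform lower bound of the form $\dim Q_v\geq 3$ on a dense subset of $\mathcal F$. A case-by-case inspection of the irreducible symmetric spaces of noncompact type (to be carried out in the Appendix) shows that this bound holds in every remaining case, while $\dim Q_v$ drops to $2$ exactly in $\mathbb H^3$, $\mathrm{SL}_4 \mathbb R/\rso(4)$ and $\mathrm{Sp}_4 \mathbb R/\ru(2)$; in those three spaces $Q_v\cap W$ can be only one-dimensional and the construction genuinely fails. For the higher rank spaces some additional care is needed to verify that the subspaces $Q_{w_1}\cap W,\ldots,Q_{w_k}\cap W$ remain mutually transverse enough that the greedy selection actually runs to completion, which reduces to estimates on the restricted root multiplicities already appearing in \cite{CF03,CF14}.
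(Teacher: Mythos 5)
Your proposal has a genuine gap in the core construction, and it misdiagnoses why the extra exclusions are needed.

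The greedy selection you describe requires, when choosing the $j$-th vector in the new $2k$-frame from some $Q_{w_i}\cap W$, that $\dim(Q_{w_i}\cap W) > j-1$; in the worst case this forces $\dim(Q_{w_i}\cap W)\geq 2k$, i.e.\ $\dim Q_{w_i}\geq 2k+1\geq 2\,\mathrm{rank}(X)+1$ once $k=\mathrm{rank}(X)$. But this bound is badly violated: for $\rsl_{n+1}\mathbb R$ (with $n\geq 5$, which the theorem covers) the minimum of $\dim Q_v$ over maximally singular $v$ is exactly $n=\mathrm{rank}(X)$, far short of $2n+1$. So a naive greedy count cannot work, and the remark that the remaining difficulty ``reduces to estimates on the restricted root multiplicities'' undersells the issue: what is needed is not a dimension bound on each $Q_{w_i}$ alone but a matching of the $Q_{w_i}$'s with mutually \emph{orthogonal} subspaces, and that is a nontrivial combinatorial problem about the pattern of overlaps $Q_{w_i}\cap Q_{w_j}$. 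The paper isolates this as Property $\mathbf{E}$ (for each $i$ find a $3$-dimensional $P_i\subset Q_{w_i}$ with the $P_i$ pairwise orthogonal, hence a total orthonormal $3r$-frame), reformulates it as a $0$--$1$ matrix matching problem, and then proves it by a careful stage-by-stage algorithm with repair steps, case by case over the irreducible spaces. Once Property $\mathbf{E}$ is in hand, restricting to $W$ is trivial (each $3$-dimensional $P_i$ meets the codimension-$1$ subspace $W$ in dimension $\geq 2$, and orthogonality of the $P_i$'s is what makes the $2k$-frame orthonormal) — that step you could recover, but the matching theorem itself is the real content and your proposal has no argument for it.

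The diagnosis of the excluded cases is also off. You claim $\dim Q_v$ drops to $2$ for $\mathrm{SL}_4\mathbb R/\rso(4)$ and $\mathrm{Sp}_4\mathbb R/\mathrm U(2)$, but in both these spaces $\dim Q_v\geq 3$ for every maximally singular $v$ (for $\rsl_4\mathbb R$: $\dim Q_{w_1}=3$; for $\rsp_4\mathbb R$: $\dim Q_{w_1}=\dim Q_{w_2}=3$). The actual obstruction is a global count: Property $\mathbf{E}$ requires $\dim(X)-\mathrm{rank}(X)\geq 3\,\mathrm{rank}(X)$, which fails for $\rsl_4\mathbb R$, $\rsl_5\mathbb R$ and $\rsp_4\mathbb R$; the weaker count $\dim(X)-\mathrm{rank}(X)-1\geq 2\,\mathrm{rank}(X)$ needed for the codimension-$1$ statement holds for $\rsl_5\mathbb R$ (which is why the paper salvages it with a separate argument) but fails for $\rsl_4\mathbb R$ and $\rsp_4\mathbb R$. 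You would also need to note and handle this borderline $\rsl_5\mathbb R$ case explicitly; as written your proposal would wrongly include $\rsl_4\mathbb R$ and $\rsp_4\mathbb R$ (since their $\dim Q_v\geq 3$) and offers no mechanism to recover $\rsl_5\mathbb R$.
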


Armed with Theorem \ref{thm:eigenmatching2}, we can prove Theorem \ref{thm:jacobiancodim1} and thus Theorem \ref{thm:codim1Jac} follows.

\begin{proof}[Proof of Theorem \ref{thm:jacobiancodim1}]
Let $r=\mathrm{rank}(X)$.
Recall that the endomorphisms $K_x^W$ and $H_x^W$ on $W$ are, when written in matrix form, as $$\langle K_x^W(v),v \rangle =\int_{\partial_F X} v^t O_\theta \left( \begin{array}{cc} O_{r \times r} & O_{r \times (n-r)} \\ O_{(n-r)\times r} & D_\lambda \end{array} \right) O_\theta^t v \ d\mu (\theta),$$ and
$$\langle H_x^W(v),v \rangle=\int_{\partial_F X} v^t O_\theta \left( \begin{array}{cc} 1 & O_{(n-1)\times 1} \\ O_{1\times (n-1)} & O_{(n-1)\times (n-1)} \end{array} \right) O_\theta^t v \ d\mu(\theta)$$ for $v\in W$.
Define $Q_x^W$ to be the endomorphism of $W$ determined by
$$\langle Q_x^W(v),v \rangle =\int_{\partial_F X} v^t O_\theta \left( \begin{array}{cc} O_{r \times r} & O_{r \times (n-r)} \\ O_{(n-r)\times r}& I_{n-r} \end{array} \right) O_\theta^t v \ d \mu (\theta).$$

Note that $K_x^W$, $H_x^W$ and $Q_x^W$ can be viewed as positive symmetric $k \times k$ matrices.
Let $\{w_1,\ldots,w_{n-1}\}$ be an orthonormal eigenbasis of $W$ for the symmetric matrix $Q^W_x$.
Then the $i$th eigenvalue of the matrix $Q^W_x$ is
$$L_i = \langle Q_x^W w_i, w_i \rangle = \int_{\partial_F X} \sum_{j=r+1}^n \langle O^t_\theta w_i, e_j \rangle^2 \ d \mu(\theta).$$
Note that none of the eigenvalues $L_i$ are $0$ since $Q^W_x$ is a positive symmetric matrix (see \cite[Section 4.4]{CF03} for more details concerning this).
Let $w_n$ be a unit vector perpendicular to $W$. Then we have an orthonormal basis $\{w_1,\ldots,w_n\}$ of $T_xX$. Set $L_n=\langle Q_x w_n, w_n \rangle$.
Note that $0<L_i\leq 1$ for all $i=1,\ldots,n$ and moreover,
$$\sum_{i=1}^n L_i = tr (Q_x) = n-r.$$

Let $\epsilon = 1/2(1+r)^2$. Suppose that $k$ of the eigenvalues of $Q^W_x$ are strictly less than $\epsilon$.
Then since $\epsilon <1/(r+1)$, we have $$n-r =\sum_{i=1}^{n-1}L_i + L_n < \frac{k}{1+r}+n-k.$$
It follows that $k < 1+r$, i.e., $k\leq r$. Then (\ref{eqn:Q}) implies that the number of eigenvectors of $K_x^W$ with eigenvalue strictly less than $D \epsilon$ can not exceed $r$. Let $\{v_1,\ldots,v_{n-1}\}$ be the orthonormal eigenbasis of $W$ for $K_x^W$. Then
we may assume that $\langle K_x(v_i),v_i\rangle < D\epsilon$ for all $i=1,\ldots, k$ and $\langle K_x(v_i),v_i\rangle \geq D\epsilon$ for all $i=k+1,\ldots, n-1$ and $k\leq r$. Furthermore by a similar argument to that in \cite[Section 3]{CF14}, we may assume that the vector space $V$ spanned by $v_1,\ldots,v_k$ satisfies $\angle (V,\mathcal F) \leq \epsilon$.
Applying Theorem \ref{thm:eigenmatching2} to the orthonormal $k$-frame $v_1,\ldots,v_k$, there is a  $C_1\epsilon$-orthonormal $2k$-frame given by $v_1',v_1'',\ldots,v_k',v_k''$ of $W$ such that for all $i=1,\ldots,k$,
$$\langle H_x^W(v_i'), v_i'\rangle \leq \frac{C^2}{D}\langle K_x^W(v_i), v_i \rangle \text{ and }\langle H_x^W(v_i''), v_i''\rangle \leq \frac{C^2}{D}\langle K_x^W(v_i), v_i \rangle.$$

From the fact that $\langle H_x^W(v),v \rangle \leq 1$ for all unit vectors $v \in T_xX$, it follows that
\begin{eqnarray*}
\det H_x^W &\leq& C' \prod_{i=1}^k \langle H_x^W(v_i'), v_i' \rangle \cdot \langle H_x^W(v_i''), v_i'' \rangle \\
&\leq& C' C^{4k}D^{-2k} \prod_{i=1}^k \langle K_x^W(v_i), v_i \rangle^2 \\
&\leq& C' C^{4k}D^{-2k} \prod_{i=k+1}^{n-1} \langle K_x^W(v_i), v_i \rangle^{-2}  (\det K_x^W)^2 \\
&\leq& C' C^{4r}D^{-2n+2}(\epsilon^{-1})^{2n-2}  (\det K_x^W)^2
\end{eqnarray*}
where $C'=1/(1-C_1\epsilon)^{4k}$ and $\epsilon^{-1}=2(1+r)^2$.
The constant $C$ in Theorem \ref{thm:jacobiancodim1} may be taken to be  $$C'C^{4r}D^{-2n+2}(\epsilon^{-1})^{2n-2}.$$
Since this constant depends only on $X$, this completes the proof of Theorem \ref{thm:jacobiancodim1}.
\end{proof}

\section{Weak Eigenvalue Matching Theorem for codimension $1$ subspaces}

We now focus on the proof of the weak eigenvalue matching theorem for codimension $1$ subspaces.
For this, it suffices to show that the space has the following property.

\vspace{0.2cm}

{\bf Property $\bf E$}.
Let $X$ be a symmetric space of noncompact type. $X$ will be said to have property {\bf E} if for any maximal flat $\mathcal F$, and putting $r=\mathrm{rank}(X)$, then for any $r$-frame $\{v_1,\ldots, v_r\}$ of $\mathcal F$, there is an orthonormal $3r$-frame given by vectors $v_1',v_1'',v_1''',\ldots,v_r',v_r'',v_r'''$, such that for $i=1,\ldots, r$,
$$\mathrm{span}\{v_i',v_i'',v_i'''\}=:P_i \subset Q_i:=(\mathrm{span}\{K_i\cdot \mathcal F\})^\bot$$
where $K_i$ is the stabilizer group of $v_i$ in $K$.

\vspace{0.05cm}

\begin{remark}\label{remark:propertyE}
Let $\{v_1,\ldots, v_r\}$ be any $r$-frame of $\mathcal F$.
Firstly, choose a maximally singular line $\ell_1$ closest to $v_1$.
Secondly, choose a maximally singular line $\ell_2$ that is closest to $v_2$ among maximally singular lines not in the $1$-dimensional subspace spanned by $\ell_1$. In this way for each $i=2,\ldots,r$, choose a maximally singular line $\ell_i$ that is closest to $v_i$ among maximally singular lines not in the $(i-1)$-dimensional subspace spanned by the lines already chosen.
Choose a vector $w_i \in \ell_i$ for each $i$. Then it is clear that $\{w_1,\ldots,w_r\}$ is a $r$-frame of $\mathcal F$ and consists of maximally singular vectors. By the construction of the $r$-frame, it follows that $Q_{w_i} \subset Q_{v_i}$ for each $i$. Hence if the statement of property {\bf E} holds for any frame of maximally singular vectors, it holds for arbitrary frame. Consequently in order to show that $X$ has the property {\bf E}, it suffices to prove the statement of property {\bf E} only for frames of maximally singular vectors.
\end{remark}

\begin{theorem}\label{thm:9.1}
Let $X$ be a symmetric space of noncompact type satisfying property $\bf E$. Then Theorem \ref{thm:eigenmatching2} holds.
\end{theorem}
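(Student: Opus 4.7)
The plan is to transplant Connell--Farb's proof of the weak eigenvalue matching Theorem~\ref{thm:eigenmatching} to the codimension $1$ setting, using Property $\mathbf{E}$ (which supplies a three-dimensional subspace per frame element) in place of the two-dimensional version implicit in \cite{CF14}. Property $\mathbf{E}$ provides exactly one extra dimension beyond what is needed to prove the original theorem, and that extra dimension is precisely what the constraint $v_i', v_i'' \in W$ will consume.

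I would begin with the orthonormal $k$-frame $v_1, \ldots, v_k \subset W$ with $\angle(V, \mathcal F) \leq \epsilon$. First, following the perturbation step of \cite{CF14}, I would move each $v_i$ through $K$ by an $O(\epsilon)$ rotation to obtain a $k$-frame $w_1, \ldots, w_k$ of $\mathcal F$ (made maximally singular by Remark~\ref{remark:propertyE} if necessary), and extend to an orthonormal $r$-frame of $\mathcal F$. Next, I would apply Property $\mathbf{E}$ to this $r$-frame to obtain an orthonormal $3r$-frame, keeping the first $3k$ vectors $w_1', w_1'', w_1''', \ldots, w_k', w_k'', w_k'''$; these span pairwise-orthogonal three-dimensional subspaces $P_i := \mathrm{span}\{w_i', w_i'', w_i'''\} \subset Q_{w_i}$.

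The key step is then the dimension count: because $P_i$ is three-dimensional and $W$ has codimension $1$ in $T_xX$, the intersection $P_i \cap W$ has dimension at least $2$, so I can pick an orthonormal pair $v_i', v_i''$ inside it. The pairwise orthogonality of the $P_i$ forces the full $2k$-frame $\{v_i', v_i''\}_{i=1}^k$ to be orthonormal up to the $O(\epsilon)$ wobble introduced by the initial $K$-perturbation, yielding the desired $C_1 \epsilon$-orthonormality for a suitable constant $C_1$. For the angle inequalities, since $v_i', v_i'' \in P_i \subset Q_{w_i}$, the Angle Inequality Lemma would give $\angle(h v_i', \mathcal F^\perp) \leq C \angle(h w_i, \mathcal F)$ for every $h \in K$; combining with $\angle(h w_i, \mathcal F) \leq \angle(h v_i, \mathcal F) + O(\epsilon)$ (a consequence of $d(v_i, w_i) = O(\epsilon)$) would produce the required estimate after slightly enlarging $C$.

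The hard part will be the perturbation bookkeeping: tracking the $O(\epsilon)$ errors coming from the initial rotation to $\mathcal F$, the intersection $P_i \cap W$, and the comparison of angles, and ensuring they accumulate only to a single uniformly bounded multiple $C_1 \epsilon$. This is exactly the bookkeeping that Connell--Farb carry out in \cite{CF14}, so no essentially new analytic input should be required. The strictly new content is the dimension count in the previous paragraph, which is immediate once Property $\mathbf{E}$ is in hand; all of the interesting work has thereby been pushed into Property $\mathbf{E}$ itself, whose verification for individual symmetric spaces is taken up in the remaining sections and the Appendix.
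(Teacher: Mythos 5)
Your proposal follows the paper's overall skeleton closely — perturb the $v_i$ to maximally singular vectors $w_i$ in $\mathcal F$, apply Property $\mathbf{E}$ to get the three-dimensional subspaces $P_i$, use codimension one to get a two-dimensional slice $P_i \cap W$, and invoke the angle inequality. However, there is a real gap in how you close the argument, and it is not a bookkeeping issue that can be deferred to \cite{CF14}.

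The gap is the step where you pass from $\angle(h v_i', \mathcal F^\perp) \leq C \angle(h w_i, \mathcal F)$ to the required conclusion via the additive estimate $\angle(h w_i, \mathcal F) \leq \angle(h v_i, \mathcal F) + O(\epsilon)$ and "slightly enlarging $C$." An additive $O(\epsilon)$ error cannot be absorbed into a multiplicative constant: when $\angle(h v_i, \mathcal F)$ is small (or zero, which can certainly occur for some $h \in K$), the term $O(\epsilon)$ dominates, and no enlargement of $C$ gives the required bound $\angle(h v_i', \mathcal F^\perp) \leq C' \angle(h v_i, \mathcal F)$. The paper avoids this by not intersecting $P_i$ directly with $W$. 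Instead it produces an element $k_i' \in K$ with $d_K(k_i', \mathrm{id}) < c_1\epsilon$ such that $v_i \in k_i' K_i \cdot \mathcal F$, then shows $k_i' Q_i \subset (\mathrm{Stab}_K(v_i) \cdot k_i \mathcal F)^\perp$, and intersects the \emph{rotated} subspace $k_i' P_i$ with $W$. The resulting $v_i' = k_i' u_i'$, $v_i'' = k_i' u_i''$ then lie in a subspace to which the angle inequality applies with $v_i$ itself (not $w_i$) as the reference vector, giving the multiplicative bound directly. This rotation by $k_i'$ is the essential step your proposal omits, and it is also the reason the conclusion is only a $C_1\epsilon$-orthonormal frame rather than an exactly orthonormal one: the $k_i' P_i$ are no longer pairwise orthogonal, even though the $P_i$ are.

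So the correct fix is: rotate each $P_i$ by $k_i'$ first, and only then take the two-dimensional intersection with $W$; the dimension count $\dim(k_i' P_i \cap W) \geq 2$ goes through just as before since $k_i'$ preserves dimensions.
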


\begin{proof}
It is sufficient to prove Theorem \ref{thm:eigenmatching2} when $k=r:=\mathrm{rank}(X)$.
Given an orthonormal $r$-frame $\{v_1,\ldots,v_r\}$, a $c_1\epsilon$-orthonormal $r$-frame $\{w_1,\ldots,w_r\}$ of $\mathcal F$ can be obtained in the same way as in the proof in \cite[Section 2]{CF14}: choose a closest element $k_i \in K$ to the identity such that $\widehat w_i :=k_i^{-1}v_i$ lies in $\mathcal F$. Then replace $\widehat w_i$ with the most singular vector $w_i$ in the $\epsilon_0:=1/(r+1)^2$ ball about $\widehat w_i$ and that is closest to $\widehat w_i$.
Applying the property $\bf E$ to the $r$-frame $\{w_i\}$ of $\mathcal F$ produces an orthonormal $3r$-frame $\{w_i',w_i'',w_i'''\}$ as described in property $\bf E$. 

Similarly to the proof in the erratum \cite{CF14}, there is a constant $c_1>0$ depending only on $\mathrm{rank}(X)$ such that for $i=1,\ldots, r$,
$$v_i \in k_i' K_i \cdot \mathcal F \text{ for some }k_i' \in K \text{ with } d_K(k_i',id)<c_1\epsilon.$$
where $K_i$ is the stabilizer group of $w_i$ in $K$. Since $w_i$ is more singular than $\widehat w_i$,  $\mathrm{Stab}_K(\widehat w_i) \subset K_i$ and moreover, it follows from $v_i \in k_i' K_i \cdot \mathcal F $ that
$$ \mathrm{Stab}_K(v_i) \cdot k_i\mathcal F =\mathrm{Stab}_K(k_i\widehat w_i) \cdot k_i\mathcal F  = k_i \mathrm{Stab}_K(\widehat w_i) \cdot \mathcal F \subset k_i' K_i \cdot \mathcal F.$$
Hence we have $$k_i' Q_i = (k_i'K_i\cdot \mathcal F)^\bot \subset (\mathrm{Stab}_K(v_i)\cdot k_i\mathcal F)^\bot.$$
This implies that for any $w \in k_i'Q_i$ and any $h\in K$,  $$\angle (hw,k_i\mathcal F^\bot ) \leq C \angle (hv_i, k_i\mathcal F) \text{ and so }\angle (hw,\mathcal F^\bot ) \leq C \angle (hv_i, \mathcal F).$$

Since $W$ is a codimension $1$ subspace and the dimension of $P_i$ is $3$, the dimension of $k_i'P_i \cap W$ is at least $2$.
Hence it is possible to choose an orthonormal $2$-frame $\{u_i', u_i''\}$ of $P_i$ such that $k_i' (\mathrm{span}\{u_i', u_i''\}) \subset k_i'P_i \cap W$. We put $v_i'=k_i'u_i'$ and $v_i''=k_i'u_i''$ for each $i=1,\ldots,r$. Since the $P_i$s are pairwise orthogonal, $\{u_i',u_i''\}$ is an orthonormal $2r$-frame and hence $\{v_i',v_i''\}$ is an $C_1\epsilon$-orthonormal $2r$-frame of $W$. Furthermore, since $v_i'$ and $v_i''$ are vectors in $k_i'P_i \subset k_i'Q_i \subset (\mathrm{Stab}_K(v_i)\cdot k_i\mathcal F)^\bot$, they satisfy the desired angle inequality.
\end{proof}

Now we focus on the problem which symmetric spaces have the property {\bf E}. We will prove

\begin{theorem}\label{propertye}
Let $X$ be a symmetric space of noncompact type with no direct factors isometric to $\mathbb R$, $\mathbb H^2$, $\mathbb H^3$, $\mathrm{SL}_3 \mathbb R/\mathrm{SO}(3)$, $\mathrm{SL}_4 \mathbb R/\mathrm{SO}(4)$, $\mathrm{SL}_5 \mathbb R/\mathrm{SO}(5)$ or $\mathrm{Sp}_4\mathbb R/ \mathrm{U}(2)$. Then it has the property $\bf E$.
\end{theorem}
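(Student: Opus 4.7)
The plan is to reduce to the irreducible case and then verify property $\mathbf{E}$ by a case-by-case analysis of the restricted root system. First, if $X = X_1\times\cdots\times X_m$ is a decomposition into irreducible factors, then the Weyl chamber of $X$ is the product of the Weyl chambers of the $X_j$, so every edge of that chamber—that is, every maximally singular ray—lies entirely in a single $\mathfrak{a}_j$. Any basis $\{v_1,\ldots,v_r\}$ of $\mathfrak{a}$ by maximally singular vectors therefore splits as a disjoint union of maximally singular bases of the $\mathfrak{a}_j$, and the subspaces $Q_{v_i}$ associated with $v_i\in \mathfrak{a}_j$ lie in $T_{x_j}X_j$; consequently the $P_i$'s coming from different factors are automatically orthogonal. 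In view of Remark \ref{remark:propertyE}, it suffices to establish property $\mathbf{E}$ for each irreducible factor—none of which is in the excluded list.

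Second, I identify $Q_v$ using the Cartan decomposition
$$T_xX = \mathfrak{a}\oplus\bigoplus_{\alpha\in\Sigma^+}\mathfrak{p}_\alpha,$$
where the summands are pairwise orthogonal and $\dim\mathfrak{p}_\alpha = m_\alpha$. A short computation shows that the centralizer $\mathfrak{z}_{\mathfrak{p}}(v) = \mathfrak{a}\oplus\bigoplus_{\alpha(v)=0}\mathfrak{p}_\alpha$ is exactly the span of all Cartan subspaces of $\mathfrak{p}$ containing $v$, which in turn equals $\mathrm{span}\{K_v\cdot\mathcal{F}\}$. Therefore
$$Q_v = \bigoplus_{\alpha\in\Sigma^+,\ \alpha(v)\ne 0}\mathfrak{p}_\alpha,$$
a direct sum of pairwise orthogonal root spaces. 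Property $\mathbf{E}$ is thereby reduced to a combinatorial problem: find pairwise disjoint subsets $S_1,\ldots,S_r\subset\Sigma^+$ such that $\alpha(v_i)\ne 0$ for every $\alpha\in S_i$ and $\sum_{\alpha\in S_i}m_\alpha\ge 3$. Given such subsets, any $3$-dimensional subspace $P_i\subset\bigoplus_{\alpha\in S_i}\mathfrak{p}_\alpha$ works, and the disjointness of the $S_i$'s forces the $P_i$'s to be pairwise orthogonal, as required.

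The remainder of the proof is a verification, one restricted root system at a time, that the requisite disjoint assignment exists. When some root multiplicity is $\ge 3$ (as in the complex, quaternionic, and octonionic analogues of $\rsl_n$, and in many exceptional cases), the construction is almost immediate: a single root space per index already suffices. The excluded rank-one cases $\mathbb H^2,\mathbb H^3$ fail simply because $\dim Q_v < 3$, and the excluded small-rank split cases $\rsl_{n+1}\mathbb R/\rso(n+1)$ for $n\in\{2,3,4\}$ and $\rsp_4\mathbb R/\ru(2)$ fail already at the crude bound $3r\le\sum_{\alpha\in\Sigma^+}m_\alpha$, which shows why they must be excluded.

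The main obstacle lies in the remaining split real forms, especially $\rsl_{n+1}\mathbb R/\rso(n+1)$ with $n\ge 5$ and the split forms of types $B_r,C_r,D_r,E_6,E_7,E_8,F_4,G_2$. Here each $\mathfrak{p}_\alpha$ is one-dimensional, so one needs \emph{three distinct} positive roots per $v_i$, disjoint across indices and each non-vanishing on $v_i$. Since the $v_i$ form an arbitrary basis of maximally singular vectors rather than the standard frame of fundamental coweights $\{\omega_i^\vee\}$, no single Weyl element simultaneously normalizes all of them, so the construction must be uniform in the choice of frame—effectively a Hall-type matching on the root system, respecting the non-vanishing condition $\alpha(v_i)\ne 0$. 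This explicit matching, carried out type by type, is the technical heart of the proof and is relegated to the appendix.
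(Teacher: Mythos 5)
Your reduction and reformulation are set up exactly as in the paper, but the proof is not actually there: you correctly reduce property~$\mathbf{E}$ to a combinatorial matching problem on the restricted root system and then explicitly declare that the type-by-type verification is "the technical heart of the proof" and "relegated to the appendix" — an appendix you do not supply. This combinatorial verification \emph{is} the proof. The paper spends an entire section on a nontrivial greedy algorithm for the split case $\rsl_{n+1}\mathbb{R}/\rso(n+1)$ with $n\geq 5$ (rows of the incidence matrix are reordered by remaining $1$'s, three entries are removed per stage, and a careful backtracking analysis shows the algorithm can always be rescued from failure at the last one or two stages, using Lemma~\ref{lem:n} and Lemma~\ref{lem:n-1dim}), plus an appendix of roughly fifteen further case analyses. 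None of this is reproduced or replaced by any alternative argument. Merely noting that "when some root multiplicity is $\geq 3$ the construction is almost immediate" handles only the easy cases; for the split forms of types $A_n,B_n,C_n,D_n,E_6,E_7,E_8,F_4,G_2$, where each $\mathfrak{p}_\alpha$ is one-dimensional, the matching argument is genuinely delicate and cannot be waved off.

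A few smaller points. Your reduction to the irreducible case is valid but takes a slightly different route from the paper: you observe that maximally singular vectors of a product live entirely in one factor, so after invoking Remark~\ref{remark:propertyE} the frame splits automatically. The paper instead proves Lemma~\ref{redlemma} via a determinant expansion, which handles an arbitrary frame directly and does not require first passing to maximally singular vectors. Both are correct; the paper's is marginally more self-contained since it keeps the product reduction and the singular-vector reduction independent of each other. Also, your observation that the excluded small cases violate the crude counting bound $3r \leq \dim X - r$ is a nice sanity check but is not proof that property~$\mathbf{E}$ fails for them (the bound is necessary but a priori not sufficient); the paper records this remark after Proposition~\ref{pro:slnR} in essentially the same spirit. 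Finally, note that the paper still salvages the codimension-one Jacobian bound for $\rsl_5\mathbb{R}/\rso(5)$ (Proposition~\ref{prop:sl5r}) even though property~$\mathbf{E}$ fails there, by exploiting the extra room afforded by $\dim X - r - 1 \geq 2r$; your proposal does not address this, though strictly speaking it is outside the scope of Theorem~\ref{propertye} itself.
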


Theorem \ref{propertye} together with Theorem \ref{thm:9.1} implies Theorem \ref{thm:eigenmatching2} in all cases except $\rsl_5\mathbb R/\rso(5)$. We will give a proof of Theorem \ref{thm:eigenmatching2} in the $\rsl_5\mathbb R/\rso(5)$ case separately.

\section{Reduction to the irreducible case}

To make a reduction to the case when $X$ is irreducible, it suffices to deal with the product of two symmetric spaces.

\begin{proposition}\label{red}
If the property $\bf E$ holds for two symmetric spaces $X_1$ and $X_2$ of noncompact type, then it also holds for $X_1 \times X_2$.
\end{proposition}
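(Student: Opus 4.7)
The plan is to exploit the product structure of $X=X_1\times X_2$ directly. Write $r=r_1+r_2$, and use that every maximal flat of $X$ splits as $\mathcal{F}=\mathcal{F}_1\times\mathcal{F}_2$, that the maximal compact $K$ fixing the base point splits as $K=K'_1\times K'_2$ (with $K'_j$ a maximal compact of the isometry group of $X_j$), and that the tangent space splits orthogonally as $T_xX=T_{x_1}X_1\oplus T_{x_2}X_2$. By Remark~\ref{remark:propertyE} it suffices to verify property $\mathbf{E}$ on an $r$-frame of maximally singular vectors.

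First I would observe that the Weyl group of $X$ is the product of the Weyl groups of $X_1$ and $X_2$, and a Weyl chamber in $\mathcal{F}$ is the product of individual Weyl chambers in $\mathcal{F}_1$ and $\mathcal{F}_2$. Its extreme rays, which are exactly the maximally singular directions, therefore lie wholly in $\mathcal{F}_1\times\{0\}$ or in $\{0\}\times\mathcal{F}_2$. Each $v_i$ thus has the form $(u_i,0)$ or $(0,u_i)$ with $u_i$ maximally singular in the relevant factor, and linear independence forces exactly $r_1$ of them to lie in the first summand and $r_2$ in the second. After relabelling, I obtain two frames of maximally singular vectors, one in each factor.

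Next I invoke property $\mathbf{E}$ on $X_1$ and $X_2$ separately, obtaining orthonormal $3r_1$- and $3r_2$-frames inside $T_{x_1}X_1$ and $T_{x_2}X_2$, respectively. Their concatenation is an orthonormal $3r$-frame of $T_xX$ because the two summands $T_{x_1}X_1$ and $T_{x_2}X_2$ are mutually orthogonal. It remains to check $P_i\subset Q_i$ with $Q_i$ computed inside $T_xX$. For $i\leq r_1$, the stabilizer $K_i$ of $v_i=(u_i,0)$ in $K=K'_1\times K'_2$ equals $(K'_1)_{u_i}\times K'_2$, where $(K'_1)_{u_i}$ is the stabilizer of $u_i$ in $K'_1$, so
$$\mathrm{span}(K_i\cdot\mathcal{F})=\mathrm{span}((K'_1)_{u_i}\cdot\mathcal{F}_1)\oplus\mathrm{span}(K'_2\cdot\mathcal{F}_2).$$
Because $K'_2$ acts transitively on maximal flats of $X_2$ through $x_2$, the second summand fills all of $T_{x_2}X_2$; hence $Q_i=Q_i^{(1)}\oplus 0$, where $Q_i^{(1)}\subset T_{x_1}X_1$ is the analogous orthogonal complement computed in $X_1$. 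The three vectors handed to me by property $\mathbf{E}$ applied to $X_1$ lie in $Q_i^{(1)}\times\{0\}\subset Q_i$, so the required inclusion holds. The case $i>r_1$ is symmetric.

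The only (minor) obstacle is the characterization of maximally singular directions in the product, which reduces to the convex-geometric fact that the extreme rays of a product cone $C_1\times C_2$ are exactly the union of the extreme rays of $C_1$ and of $C_2$; once this is in hand, the rest is routine bookkeeping across the two factors.
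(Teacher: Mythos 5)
Your proposal is correct, and it takes a genuinely different route from the paper. The paper proves the splitting for an \emph{arbitrary} frame of $\mathcal F_1\times\mathcal F_2$ via Lemma \ref{redlemma}, a Laplace-type expansion of the determinant that guarantees a reordering under which the first $r_1$ vectors project to a frame of $\mathcal F_1$ and the last $r_2$ project to a frame of $\mathcal F_2$; it then lifts the frames supplied by property $\mathbf E$ in each factor by padding with zeros, and one checks that the resulting $3r$-frame lands in the right $Q_{v_i}$'s because the orthogonal complement of $\mathrm{Stab}_K(v_i)\cdot\mathcal F$ splits over the two summands. You instead invoke Remark \ref{remark:propertyE} to reduce to a frame of maximally singular vectors, and then use the convex-geometric fact that the extreme rays of the product Weyl chamber $\overline{\mathfrak a_1^+}\times\overline{\mathfrak a_2^+}$ lie wholly in one factor or the other, so the decomposition of the frame into pieces coming from $\mathcal F_1$ and $\mathcal F_2$ happens automatically (with exactly $r_1$ and $r_2$ vectors by dimension count). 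Your approach bypasses Lemma \ref{redlemma} entirely and is arguably more conceptual, but it leans on Remark \ref{remark:propertyE} in a place where the paper's argument does not need it, since the paper's proof works directly with an arbitrary frame. Both identify $\mathrm{Stab}_K(v_i)$ as a product and correctly observe that $\mathrm{span}(K'_2\cdot\mathcal F_2)=T_{x_2}X_2$, and both check the inclusion $P_i\subset Q_{v_i}$ in the same way. One stylistic caution: your route relies on the remark's claim that $Q_{w_i}\subset Q_{v_i}$ when one passes to nearby maximally singular $w_i$; this was stated in the paper for a general symmetric space, so the dependence is legitimate, but you should cite that remark explicitly rather than only claiming "it suffices."
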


We first need the following lemma.

\begin{lemma}\label{redlemma}
For $i=1,2$, let $V_i$ be a real vector space of dimension $n_i$ and $V=V_1 \times V_2$ be the product vector space of $V_1$ and $V_2$. Given a frame $\{v_1,\ldots,v_{n_1+n_2}\}$ of $V$, there exists a permutation $\tau$ of the set $\{1,\ldots,n_1+n_2\}$ such that
\begin{itemize}
\item[-] $\{p_1(v_{\tau(1)}),\ldots,p_1(v_{\tau(n_1)})\}$ is a frame of $V_1$ and,
\item[-] $\{p_2(v_{\tau(n_1+1)}),\ldots,p_2(v_{\tau(n_1+n_2)})\}$ is a frame of $V_2$
\end{itemize}
where $p_i : V_1 \times V_2 \rightarrow V_i$ is the canonical projection onto $V_i$.
\end{lemma}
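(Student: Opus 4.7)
The plan is to translate the statement into a nonvanishing-determinant condition and invoke the generalized Laplace expansion along complementary minors. I would begin by fixing a product basis of $V = V_1 \times V_2$ obtained by concatenating bases of $V_1$ and $V_2$, and representing each $v_j$ as a column vector whose first $n_1$ entries are the coordinates of $p_1(v_j)$ and whose last $n_2$ entries are the coordinates of $p_2(v_j)$. Stacking these columns yields a block matrix
$$M = \begin{pmatrix} A \\ B \end{pmatrix}$$
of size $(n_1+n_2)\times(n_1+n_2)$, where $A$ is the $n_1 \times (n_1+n_2)$ matrix whose $j$-th column is $p_1(v_j)$ and $B$ is the $n_2 \times (n_1+n_2)$ matrix whose $j$-th column is $p_2(v_j)$. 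Since $\{v_1,\ldots,v_{n_1+n_2}\}$ is a frame of $V$, the matrix $M$ is invertible.

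Next I would apply the generalized Laplace expansion (equivalently, the Cauchy--Binet formula) along the top $n_1$ rows of $M$, obtaining
$$\det M \;=\; \sum_{\substack{S \subset \{1,\ldots,n_1+n_2\} \\ |S|=n_1}} \varepsilon_S\, \det(A_S)\, \det(B_{S^c}),$$
where $A_S$ denotes the $n_1\times n_1$ submatrix of $A$ with columns indexed by $S$, $B_{S^c}$ is the analogous $n_2\times n_2$ submatrix of $B$ for the complementary columns, and $\varepsilon_S \in \{\pm 1\}$ is a sign. Because $\det M \neq 0$, at least one summand must be nonzero, so there exists a subset $S$ of size $n_1$ with $\det A_S \neq 0$ and $\det B_{S^c} \neq 0$ simultaneously. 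This is exactly the statement that $\{p_1(v_j) : j\in S\}$ is a basis of $V_1$ while $\{p_2(v_j) : j\in S^c\}$ is a basis of $V_2$. To finish, I would choose any permutation $\tau$ of $\{1,\ldots,n_1+n_2\}$ sending $\{1,\ldots,n_1\}$ bijectively onto $S$.

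Since the argument is essentially an immediate consequence of the block-Laplace expansion of an invertible matrix, I do not expect any serious obstacle; the single conceptual point worth flagging is the passage from a single nondegeneracy of $M$ to the existence of a simultaneously nondegenerate pair of complementary column submatrices in $A$ and $B$, which is exactly what the generalized Laplace expansion supplies.
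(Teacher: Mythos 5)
Your proof is correct and is essentially the same argument as the paper's: both reduce the claim to the nonvanishing of $\det M$ and then invoke a block expansion of the determinant to extract a complementary pair of nonsingular minors. The paper writes the expansion as a normalized sum over all permutations of $\{1,\ldots,n_1+n_2\}$, whereas you use the cleaner Laplace expansion by complementary minors indexed by $n_1$-element subsets; these are the same identity (the parenthetical identification with Cauchy--Binet is a slight misnomer, but Laplace expansion is exactly the right tool and the argument is sound).
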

\begin{proof}
We may assume that $V_1=\mathbb R^{n_1}$, $V_2=\mathbb R^{n_2}$ and $V=\mathbb R^{n_1+n_2}$.
Let $A$ be a square matrix of size $n_1+n_2$. One can write a matrix $A$ in terms of its column vectors
$$A = [a_1, \ \cdots, \ a_{n_1+n_2}].$$
The determinant of $A$ can be computed by
\begin{align}\label{det} \det(A) = \frac{1}{(n_1+n_2)!}\sum_{\sigma \in S_{n_1+n_2}} & sign(\sigma) \det \left( [p_1(a_{\sigma(1)}), \ \cdots, \ p_1(a_{\sigma(n_1)})] \right) \nonumber \\  & \ \cdot \det \left( [p_2(a_{\sigma(n_1+1)}), \ \cdots, \ p_2(a_{\sigma(n_1+n_2)})] \right)
\end{align}
where $S_{n_1+n_2}$ denotes the permutation group of $\{1,\ldots,n_1+n_2\}$.
This can be shown by verifying that the formula on the right hand side of (\ref{det}) satisfies the essential properties characterizing the determinant.

Let $B$ be the matrix of columns $v_1,\ldots, v_{n_1+n_2}$. Since $\{ v_1,\ldots, v_{n_1+n_2} \}$ is a frame of $V$, we have $\det (B) \neq 0$. Applying the formula (\ref{det}) to $B$, there exists a permutation $\tau \in S_{n_1+n_2}$ such that $$\det \left( [p_1(v_{\tau(1)}), \ \cdots, \ p_1(v_{\tau(n_1)})] \right)  \det \left( [p_2(v_{\tau(n_1+1)}), \ \cdots, \ p_2(v_{\tau(n_1+n_2)})] \right) \neq 0.$$
This implies the lemma.
\end{proof}

For $i=1,2$, let $\mathcal F_i$ be a maximal flat of $X_i$ and $G_i$ be the identity component of the isometry group of $X_i$ and $K_i$ be the maximal compact subgroup of $G_i$. Then a maximal flat $\mathcal F$ of $X=X_1 \times X_2$ can be written as a product $\mathcal F=\mathcal F_1 \times \mathcal F_2$ of maximal flats $\mathcal F_i$ of $X_i$, and the identity component $G$ of the isometry group of $X$ is $G_1 \times G_2$. A maximal compact subgroup $K$ of $G$ can be written as $K=K_1 \times K_2$ where $K_i$ are maximal compact subgroups of $G_i$.

Let $v=(v^1,v^2)$ be a vector in $\mathcal F$. Then the stabilizer subgroup $\mathrm{Stab}_K(v)$ of $v$ in $K$ can be written as a product
$$\mathrm{Stab}_K(v)=\mathrm{Stab}_{K_1}(v^1) \times \mathrm{Stab}_{K_2}(v^2).$$
Hence we have
$$ Q_v := (\mathrm{Stab}_K(v) \cdot \mathcal F)^\bot = (\mathrm{Stab}_{K_1}(v^1) \cdot \mathcal F_1 \times \mathrm{Stab}_{K_2}(v^2) \cdot \mathcal F_2 )^\bot.$$
Now we are ready to prove Proposition \ref{red}.

\begin{proof}[Proof of Proposition \ref{red}]
Let $\dim(\mathcal F_i)=n_i$ for $i=1,2$.
Let $\{v_1,\ldots,v_n\}$ be an $n$-frame of $\mathcal F=\mathcal F_1\times \mathcal F_2$ where $n=n_1+n_2$. For each $i=1,\ldots,n$, $v_i$ can be written as a product
$v_i = (v_i^1, v_i^2)$. According to Lemma \ref{redlemma}, we may assume that $\{v_1^1,\ldots,v_{n_1}^1\}$ is an $n_1$-frame of $\mathcal F_1$ and $\{v_{n_1+1}^2, \ldots,v_{n}^2\}$ is an $n_2$-frame of $\mathcal F_2$. Applying the property $\bf E$ to the $n_1$-frame $\{v_1^1,\ldots,v_{n_1}^1\}$ of $\mathcal F_1$ and the $n_2$-frame $\{v_{n_1+1}^2, \ldots,v_{n}^2\}$ of $\mathcal F_2$, we obtain an orthonormal $3n_1$-frame of $T_{x_1}X_1$ given by vectors $$(v^1_1)',(v^1_1)'',(v^1_1)''',\ldots,(v^1_{n_1})',(v^1_{n_1})'',(v^1_{n_1})''' $$ and an orthonormal $3n_2$-frame of $T_{x_2}X_2$ given by vectors $$(v^2_{n_1+1})',(v^2_{n_1+1})'',(v^2_{n_1+1})''',\ldots,(v^2_{n})',(v^2_{n})'',(v^2_{n})''' $$ such that for $i=1,\ldots, n_1$,
$$ \mathrm{span}\{(v^1_i)', (v^1_i)'', (v^1_i)'''\} \subset (\mathrm{Stab}_{K_1}(v^1_i) \cdot \mathcal F_1)^\bot,$$
and for $j=n_1+1,\ldots, n$,
$$ \mathrm{span}\{(v^2_j)', (v^2_j)'', (v^2_j)'''\} \subset (\mathrm{Stab}_{K_2}(v^2_j)\cdot \mathcal F_2)^\bot.$$
For $i=1,\ldots, n_1$, set $$v_i'=((v_i^1)',0), \ v_i''=((v_i^1)'',0), \ v_i'''=((v_i^1)''',0),$$
and for $j=n_1+1,\ldots, n$,
$$v_j'=(0,(v_j^2)'), \ v_j''=(0, (v_j^2)''), \ v_j'''=(0,(v_j^2)''').$$
Then it is clear that $\{v_1',v_1'',v_1''',\ldots, v_n',v_n'',v_n'''\}$ is an orthonormal $3n$-frame of $T_{x}X$.
Furthermore it can be easily seen that for $i=1,\ldots,n$,
$$ \mathrm{span}\{v_i',v_i'',v_i'''\} \subset (\mathrm{Stab}_K(v_i) \cdot \mathcal F)^\bot.$$
Therefore $X$ has the property $\bf E$.
\end{proof}

\section{Proof of Theorem \ref{propertye}}

As seen in the previous section, it suffices to prove Theorem \ref{propertye} for irreducible symmetric spaces of noncompact type. In this section we just deal with rank $1$ irreducible symmetric spaces and $\rsl_{n+1}\mathbb R/\mathrm{SO}(n)$. The proof of Theorem \ref{propertye} for the other higher rank symmetric spaces of noncompact type will be given in the Appendix.

From now on we fix notation as follows: Let $G$ denote a semisimple Lie group and $K$ a maximal compact subgroup of $G$. Denote by $X$ the associated symmetric space of rank $n$. We will denote the Lie algebra of $G$ and $K$ by $\mathfrak g$ and $\mathfrak k$ respectively. Let $\mathfrak g = \mathfrak k \oplus \mathfrak p$ be the Cartan decomposition and $\mathfrak a \subset \mathfrak p$ denote a maximal abelian subalgebra of $\mathfrak p$. Let $\mathfrak a^+$ denote a positive Weyl chamber of $\mathfrak a$. Let $\Lambda$ be the set of roots and $\Lambda^+$ the set of positive roots. Denote by $\Pi$ the basis of $\Lambda^+$.
Note that $T_xX$ and $\mathcal F$ can be canonically identified with $\mathfrak p$ and $\mathfrak a$ respectively.

Similarly to the erratum \cite{CF14}, property {\bf E} boils down to a combinatorial problem. We sketch this briefly.
Let $$\mathfrak g = \mathfrak g_0 +\sum_{\alpha \in \Lambda} \mathfrak g_\alpha$$ be the root space decomposition. By using the Cartan involution $\theta$ of $\mathfrak g$, one can define $\mathfrak p_\alpha = (\mathrm{Id}- \theta)\mathfrak g_\alpha \subset \mathfrak p$ for each $\alpha \in \Lambda$. Note that $\dim(\mathfrak p_\alpha) \geq 1$ and $\mathfrak p_\alpha =\mathfrak p_{-\alpha}$ and the projection map $\mathrm{Id} -\theta : \mathfrak g_\alpha \rightarrow \mathfrak p_\alpha$ is an isomorphism (see \cite[Proposition 2.14.2]{Eb}). Then $\mathfrak p$ admits an orthogonal direct sum
$$\mathfrak p=\mathfrak a + \sum_{\alpha \in \Lambda^+} \mathfrak p_\alpha.$$

Let $v$ be a vector of $\mathfrak a$. Then \begin{eqnarray}\label{eqnQv} Q_v = (\mathrm{span}(K_v \cdot \mathfrak a))^\bot = \sum_{\substack{ \alpha \in \Lambda^+, \\ \alpha(v) \neq 0 }} \mathfrak p_\alpha\end{eqnarray} where $K_v$ is the stabilizer subgroup of $v$ in $K$.
Choosing an orthonormal basis for $\mathfrak p_\alpha$ for each $\alpha \in \Lambda^+$,  we have an orthonormal basis for $\mathfrak a^\bot$. Let $\{b_i\}_{i=1}^m$ denote the basis for $\mathfrak a^\bot$ where $m=\dim(X)-\mathrm{rank}(X)$.
As seen in (\ref{eqnQv}), $Q_v$ is spanned by the collection of $\mathfrak p_\alpha$'s with $\alpha(v) \neq 0$.
Hence to every vector $v \in \mathfrak a$, one can associate a vector $u_v = (u_1^v, \ldots, u_m^v)$ as follows: put $u_i^v =1$ if $b_i \in Q_v$ and otherwise $u_i^v=0$. Denote by $|u_v|$ the number of entries of $u_v$ that are equal to $1$. Then it is clear that $\dim(Q_v)=|u_v|$. In addition, given $v,w \in \mathfrak a$, we denote by $|u_v \cap u_w|$ the number of entries of $u_v+u_w$ that are equal to $2$. Note that $|u_v \cap u_w|=\dim(Q_v \cap Q_w)$.

Given a frame $\{v_1,\ldots,v_n\}$ of $\mathfrak a$, one can associate an $n \times m$ matrix $A$ whose $i$th row consists of $u_{v_i}$. Then property {\bf E} is equivalent to the statement that one can pick three $1$ entries from each row of $A$, so that all of the $3n$ choices are in different columns. We denote the $i, j$th entry of $A$ by $a_{ij}$.

\vspace{3mm}

{\bf Property $\bf E$ restated}.
Let $G$ be a semisimple Lie group.
For any frame $\{v_1,\ldots,v_n\}$ of $\mathfrak a$, the associated matrix $A$ satisfies the following property:
For each $1\leq i\leq n$ there exists $1\leq k_i < l_i < m_i \leq m$ so that each $a_{i k_i}=a_{i l_i}=a_{i m_i}=1$ and $\cup_{i=1}^n \{ k_i, l_i, m_i \}$ has cardinality $3n$.
\smallskip

\subsection{Rank $1$ symmetric spaces}
Let $X$ be a rank $1$ symmetric space of noncompact type. Besson, Courtois and Gallot \cite{BCG99} gave a uniform upper bound on the $k$-Jacobian $\mathrm{Jac}_{k}(\mu)$ as follows. Let $W$ be a $k$-dimensional subspace of $T_x X$ for $k \geq 3$. Then
$$\mathrm{Jac}_x^W(\mu):=\frac{\det H_x^W}{(\det K_x^W)^2}\leq \frac{k^{k/2}}{(k-1)^k}.$$
This implies that $\mathrm{Jac}_x^W(\mu)$ is uniformly bounded from above for $k\geq 3$. Furthermore they gave the exact best upper bound when $k=\dim(X)$. This enabled them to prove the minimal entropy rigidity theorem for rank $1$ symmetric spaces.

From the viewpoint of the weak eigenvalue matching theorem, one can understand why the uniform upper bound on the Jacobian is obtained for $k\geq 3$ as follows. In the case of rank $1$ symmetric spaces of noncompact type, there is at most one eigenvector of $Q_x$ with eigenvalue strictly less than $1/2$, denoted it by $v_1$. Then it is easy to see that $Q_1 = v_1^\bot$. Given a $k$-dimensional subspace $W$ of $T_xX$, one can choose an orthonormal $2$-frame given by $v_1', v_1''$ satisfying the angle inequality (\ref{angleinequal}) if $\dim(W\cap v_1^\bot) \geq 2$. Since $\dim(W\cap v_1^\bot)=k-1$, one can conclude that $\mathrm{Jac}_x^W(\mu)$ is uniformly bounded if $\dim(W)\geq 3$.

If $\dim(X) \geq 4$, then $\dim(v_1^\bot) = \dim(X)-1 \geq 3$. Hence the property $\bf E$ holds for any rank $1$ symmetric space of noncompact type with dimension at least $4$, that is, all rank $1$ symmetric spaces of noncompact type except $\mathbb R$, $\mathbb H^2$ and $\mathbb H^3$.

\subsection{$\mathrm{SL}_{n+1}\mathbb R$}
In this section we will prove that the property {\bf E} holds for $\rsl_{n+1}\mathbb R$ if $n\geq 5$.
We start by recalling several necessary facts about $\rsl_{n+1}\mathbb R$.
\begin{enumerate}
\item $\mathfrak a =\{ (a_1,\ldots,a_{n+1})\in \mathbb R^{n+1} \ | \ a_1 +\cdots +a_{n+1} =0 \}$
\item $\mathfrak a^+ =\{ (a_1,\ldots,a_{n+1}) \in \mathfrak a \ | \ a_1 > a_2 > \cdots > a_{n+1} \}$
\item $\Lambda^+ = \{ a_i^*- a_j^* \ | \ i<j \}$  and $\Delta^+ = \{a_1^*-a_2^*,\ldots, a_n^*-a_{n+1}^* \}$
\end{enumerate}
One can easily check that there are $n$ maximally singular vectors $w_1,\ldots,w_n$ in $\overline{\mathfrak a^+}$ where $w_i$ is the vector defined by
$$\text{ the $k$-th coordinate of $w_i$} = \left\{ \begin{array}{ll} i & \text{ if } 1\leq k \leq n-i+1 \\ i-n-1 & \text{ if } n-i+2 \leq k \leq n+1 \end{array} \right.$$
Note that any maximally singular vector in $\mathfrak a$ is conjugate to one of these by a Weyl symmetry.
From a straightforward computation, it follows that for each $w_i$ $$\dim(Q_{w_i})=in-i(i-1).$$
This implies that the minimum for $\dim(Q_{w_i})$ among the $w_i$s is $n$. Furthermore, all maximally singular vectors $w$ with $\dim(Q_w)=n$ can be listed up to sign and scaling as follows:
$$(1,\ldots,1,-n), (1,\ldots,1,-n,1),\ldots, (-n,1,\ldots,1).$$
If $w$ and $w'$ are two linearly independent maximally singular vectors in $\mathfrak a$ with $\dim(Q_w)=\dim(Q_{w'})=n$, then $\dim(Q_w \cap Q_{w'})=1$. Note that if $n\geq 5$, the second and third minimum for $\dim(Q_{w_i})$ are $2n-2$ and $3n-6$ respectively.

\begin{proposition}\label{pro:slnR}
Property $\bf E$ holds for $\rsl_{n+1}\mathbb R$ if $n\geq 5$.
\end{proposition}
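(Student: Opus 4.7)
The plan is to reduce property {\bf E} for $\rsl_{n+1}\mathbb R$ to a combinatorial rainbow-matching problem on positive roots, and then verify Hall's condition. By Remark \ref{remark:propertyE}, I may assume the frame $\{v_1,\ldots,v_n\}$ consists of maximally singular vectors. Since every root space $\mathfrak g_\alpha$ of $\rsl_{n+1}\mathbb R$ is one-dimensional, so is each $\mathfrak p_\alpha$, and the orthonormal basis $\{b_\alpha:\alpha\in\Lambda^+\}$ of $\mathfrak a^\perp$ is canonically indexed by positive roots. Each maximally singular $v_i$ takes only two distinct values on the coordinates of $\mathbb R^{n+1}$ and therefore corresponds to a two-block partition $(A_i,B_i)$ of $[n+1]$; by (\ref{eqnQv}), $Q_{v_i}$ is spanned by those $b_\alpha$ with $\alpha(v_i)\neq 0$, i.e., by the ``cut edges'' $\{(j,k):j\in A_i,\ k\in B_i\}$. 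Producing the orthonormal $3n$-frame demanded by property {\bf E} is then the same as choosing three distinct cut edges of $(A_i,B_i)$ for each $i$, with all $3n$ choices globally distinct.

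The heart of the argument is Hall's marriage theorem (in the version for systems of distinct representatives of multiplicity $3$): such a choice exists provided $\bigl|\bigcup_{i\in S} Q_{v_i}\bigr|\geq 3|S|$ for every $S\subseteq\{1,\ldots,n\}$. I plan to fix $S$ of size $s$ and introduce the common refinement $P$ of $\{(A_i,B_i):i\in S\}$, whose blocks have sizes $c_1,\ldots,c_t$ summing to $n+1$. A pair $(j,k)$ lies outside $\bigcup_{i\in S}Q_{v_i}$ precisely when $v_i^j=v_i^k$ for every $i\in S$, i.e., when $j,k$ sit in a common block of $P$; consequently
$$\bigg|\bigcup_{i\in S}Q_{v_i}\bigg|=\binom{n+1}{2}-\sum_{l=1}^t\binom{c_l}{2}.$$
Linear independence of $\{v_i\}_{i\in S}$, whose members all live in the $(t-1)$-dimensional subspace of $\mathbb R^t$ cut out by $\sum c_l x_l=0$ (as block-constant vectors with coordinate sum zero), forces $t\geq s+1$. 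Subject to $t\geq s+1$, $\sum c_l=n+1$, and $c_l\geq 1$, the quantity $\sum\binom{c_l}{2}$ is maximized by taking one block of size $n+1-s$ together with $s$ singletons, so
$$\bigg|\bigcup_{i\in S}Q_{v_i}\bigg|\ \geq\ \binom{n+1}{2}-\binom{n+1-s}{2}\ =\ \frac{s(2n+1-s)}{2}.$$
This exceeds $3s$ iff $s\leq 2n-5$, and since $s\leq n\leq 2n-5$ whenever $n\geq 5$, Hall's condition holds throughout.

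The main obstacle I anticipate is pinning down the correct extremal configuration for the refinement $P$: once one sees that the worst case packs all but $s$ coordinates into a single block, the rank inequality $t\geq s+1$ forces the threshold $n\geq 5$ out of the linear estimate $(2n+1-s)/2\geq 3$. The sharpness at $n=4$ is consistent with the paper's need to treat $\rsl_5\mathbb R$ separately: the configuration of four distinct type-$w_1$ cuts on five coordinates yields $|\bigcup Q_{v_i}|=10<12=3\cdot 4$, so the combinatorial argument genuinely breaks down there. Once Hall's condition is verified, selecting one basis vector $b_{\alpha_i^\ell}\in Q_{v_i}$ for $\ell=1,2,3$ with all $3n$ indices distinct and setting $v_i',v_i'',v_i'''$ equal to these orthonormal basis vectors gives the $3n$-frame required by property {\bf E}.
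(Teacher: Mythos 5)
Your argument is correct, and it takes a genuinely different route from the paper's. The paper proves Proposition~\ref{pro:slnR} by the restated combinatorial version of property $\bf E$ and then runs a stage-by-stage greedy column-deletion algorithm (the same one used in Connell--Farb's erratum), which requires separate case analyses on the possible values of $|u_i|$ ($n$, $2n-2$, $\geq 3n-6$), two auxiliary structural lemmas (Lemma~\ref{lem:n} and Lemma~\ref{lem:n-1dim}), and ad hoc repair moves whenever the greedy procedure threatens to fail at Stages $n-1$ or $n$. You instead recognize that the restated property is precisely a rainbow system of distinct representatives with multiplicity $3$, and verify the defect-free Hall condition directly. The key observations---that each maximally singular vector for $A_n$ encodes a two-block partition of $[n+1]$, that the union $\bigcup_{i\in S}Q_{v_i}$ is exactly the complement of the intra-block pairs of the common refinement, that linear independence forces the refinement to have at least $|S|+1$ blocks, and that convexity of $\binom{c}{2}$ identifies the extremal refinement---compress the entire case analysis into a single closed-form estimate $\frac{s(2n+1-s)}{2}\geq 3s$ that holds for all $s\leq n$ exactly when $n\geq 5$. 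Your approach is cleaner and more conceptual, exposes the sharpness at $n=4$ explicitly (four singleton cuts on five coordinates yield $|\bigcup Q_{v_i}|=10<12$, matching the paper's need to handle $\rsl_5\mathbb R$ separately in Proposition~\ref{prop:sl5r}), and plausibly transfers to the other classical families; its one cost relative to the paper is that it leans on the $1$-dimensionality of root spaces for $A_n$, so the identical computation does not port verbatim to e.g.\ $\rsl_{n+1}\mathbb C$ or $\rso(m,n)$ without reweighting the Hall bound by root-space multiplicities.
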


\begin{proof}
Recalling Remark \ref{remark:propertyE}, it suffices to show the statement of property {\bf E} for any frame of maximally singular vectors. Hence we may assume that each $v_i$ is maximally singular.
Set $u_i=u_{v_i}$.
First of all, we reorder and relabel the rows of $A$ so that $|u_i| \leq |u_{i+1}|$ for all $i$.
As mentioned before, $|u_i|=n$ or else $|u_i| \geq 2n-2$ for each $i$ since $|u_i|=\dim(Q_{v_i})$.
We will first show that one algorithm, described below, succeeds on the $\{ u_i : |u_i|=n\}$ and then continue with another algorithm on the $\{ u_i : |u_i| = 2n-2\}$.
Now assume that there exists $p>0$ such that $|u_i|=n$ for each $1\leq i\leq p$.
For any $t$, let $N(i,t)$ denote the number of $1$'s left in the vector $u_i$ at the start of Stage $t$ of the algorithm.
Starting with $t=1$, perform Stage $t$ of the following algorithm described in \cite{CF14} on the row vectors $\{u_1,\ldots,u_p\}$.
\begin{enumerate}
\item Step 1: Reorder the rows so that for each $1\leq i \leq p-t$  $$N(i,t) \leq N(i+1,t).$$
\item Step 2: Choose three 1 entries of the top row; let $k_t, l_t, m_t$  be the column numbers of these three entries.
\item Step 3: Delete the top row and columns $k_t, l_t, m_t$, still calling the remaining vectors $u_j$ by their original names. Now increase the counter $t$ by 1 and go to Step 1.
\end{enumerate}

Let $u_{j_t}$ denote the top row after performing Step $1$ of Stage $t$. For instance $j_1=1$. Suppose that the algorithm fails at Stage $d$ for the first time.
Then $N(j_d,d)\leq 2$. It follows that $N(i,t)\geq N(i,t-1)-1$ since $|u_i \cap u_j |=1$ for any $1\leq i\neq j \leq p$. This implies that \begin{eqnarray}\label{dstage} 2 \geq N(j_d,d) \geq n-d+1 \end{eqnarray} and so $d \geq n-1$.
Hence Stage $n-1$ and $n$ are the only possible stages to fail.

Suppose that Stage $n-1$ fails for the first time. Then $N(j_{n-1},n-1)=2$, by (\ref{dstage}). A repeated application of the inequality $N(i,t)\geq N(i,t-1)-1$ gives that $N(j_k,2) \leq n-1$ for all $2\leq k \leq n-1$. This means that each of the $u_{j_2},\ldots, u_{j_{n-1}}$ must have a $1$ in one of the three columns removed from $u_1$. Since $N(1,1)=n\geq 5$ if $n\geq 5$, there exists a $1$ entry of $u_1$ not yet removed which does not overlap with any of $u_{j_2},\ldots,u_{j_{n-1}}$. Choose this entry to remove at Stage $1$ instead of choosing the $1$ entry of $u_1$ that overlaps with $u_{j_{n-1}}$. Then we give a $1$ back to $u_{j_{n-1}}$ so that $N(j_{n-1},n-1)=3$. This implies that the algorithm does not fail at Stage $n-1$. Now we may assume that the algorithm can not fail except at Stage $n$.
We prove a lemma before considering the case that Stage $n$ fails.

\begin{lemma}\label{lem:n}
Suppose that $|u_i|=n$ for each $1\leq i\leq n$. For $i\neq j$, there is exactly one $k$ with $a_{ik}=a_{jk}=1$. Furthermore there is the only one $1$ entry of $u_i$ that does not overlap with any other $u_j$. The other $1$ entry of $u_i$ overlaps with exactly one of the $u_j$ for $j \neq i$.
\end{lemma}

\begin{proof}[Proof of Lemma]
In the $\rsl_{n+1}\mathbb R$ case, the number of columns of the matrix $A$ is $n(n+1)/2$. Each column of $A$ has at least one $1$ entry by \cite{CF14}. Let $A_k$ be the submatrix of $A$ consisting of the first $k$ rows of $A$.
Let $c(k)$ be the number of columns of $A_k$ which have at least one entry with $1$.
Clearly $c(1)=n$ and $c(n)=n(n+1)/2$.

For $i\neq j$, $|u_i\cap u_j|=1$. Hence $c(2)-c(1) \geq n-1$. For a similar reason, $c(3)-c(2) \geq n-2$. Inductively, $$c(i+1)-c(i) \geq n-i$$ for $i=1,\ldots,n-1$.
Then $$c(n)=c(1) + \sum_{i=1}^{n-1} (c(i+1)-c(i)) \geq n+ \sum_{i=1}^{n-1} (n-i) = \frac{n(n+1)}{2}.$$
Since $c(n)=n(n+1)/2$, it follows that for $i=1,\ldots,n-1$,
\begin{eqnarray}\label{equality}c(i+1)-c(i) =n-i.\end{eqnarray}
It can be easily checked that (\ref{equality}) implies the lemma.
\end{proof}

Suppose that Stage $n$ fails.
By (\ref{dstage}), it follows that $N(j_n,n)=1$ or $2$. If $N(j_n,n)=1$, it can be easily seen that exactly one $1$ entry of $u_{j_n}$ is removed at each stage before Stage $n$. Noting that the number of columns of the matrix $A$ is $n(n+1)/2$ and
$$\frac{n(n+1)}{2} - 3(n-1) \geq 3 \text{ if } n\geq 5,$$
there must be at least three columns not yet removed at the start of Stage $n$.
Since $N(j_n,n)=1$, there are at least two columns not yet removed and not having a $1$ in row $u_{j_n}$. Denote two of these columns by $c_1$ and $c_2$.
Since each column must have at least one $1$ entry, we may assume that $c_1$ (respectively, $c_2$) has a $1$ entry of $u_p$ (respectively, $u_q$) for some $p,q \neq j_n$ (possibly $p=q$). When $u_p$ is the top row at some stage, one of the three columns removed from $u_p$ has a $1$ entry of $u_{j_n}$ as mentioned before. Put this column back and remove $c_1$ instead.
Then we can give a $1$ back to $u_{j_n}$.
If it is possible to choose $q\neq p$, then do the same thing with $u_p$ replaced by $u_q$ and $c_1$ replaced by $c_2$. Then we can give one more $1$ back to $u_{j_n}$ so that the algorithm does not fail at Stage $n$. If it is not possible to choose $p, q$ with $p \neq q$, each of $c_1$ and $c_2$ has only one $1$ entry of $u_p$ that does not overlap with any other row. This means that $u_p$ has two $1$'s that do not overlap with any other row. However this can not happen since each row has only one $1$ that does not overlap with any other row by Lemma \ref{lem:n}. Thus we are done in the case of $N(j_n,n)=1$.

If $N(j_n,n)=2$, a $1$ entry of $u_{j_n}$ must be removed at each stage before Stage $n$, except for only one Stage $p$ for some $1\leq p\leq n-1$. As shown before, there are at least three columns not yet removed at the beginning of Stage $n$. Since $N(j_n,n)=2$, there is a column $c_3$ not yet removed and not having a $1$ in row $u_{j_n}$.
If we can choose a $1$ entry of $u_q$ in $c_3$ for $q \neq p$, one of the three columns removed from $u_q$ has a $1$ entry of $u_{j_n}$. Put this column back and choose $c_3$ instead. Then $N(j_n,n)=3$ and so the algorithm does not fail at Stage $n$. If we can not choose a $q$ as above, this means that $c_3$ has the only one $1$ entry of $u_p$ that does not overlap with any other row. Then since each row has only one $1$ entry that does not overlap with any other row, any $1$ entry removed from $u_p$ overlaps with some other row. Let $c_4$ be a column removed from $u_p$ such that $c_4$ has a $1$ entry of $u_r$ for some $r \neq p, j_n$. Then first put the column $c_4$ back, and remove $c_3$ instead. Since $r\neq p$, one of the three columns removed from $u_r$ has a $1$ entry of $u_{j_n}$. Again put this column back and remove $c_4$ instead when we choose three $1$'s of $u_r$.
Then we can give a $1$ back to $u_{j_n}$ so that the algorithm succeeds at Stage $n$.

So far, we have shown that we can perform this algorithm successfully on the row vectors $\{u_i : |u_i|=n \}$ for $n\geq 5$.
Now we need to continue with an algorithm to be applied to the row vectors $\{ u_i : |u_i| = 2n-2 \}$.
However it seems hard to get a successful algorithm by applying the previous algorithm to these row vectors.
We need another algorithm. First, we need to observe something more about a maximally singular vector $v$ with $\dim(Q_v)=2n-2$.

Recall the list of maximally singular vectors $w$ with $\dim(Q_w)=n$ up to sign and scaling. The list consists of the vectors $z_1,\ldots,z_{n+1}$  in $\mathbb R^{n+1}$ where $z_i$ is the vector defined by
$$\text{ the $k$-th coordinate of $z_i$} = \left\{ \begin{array}{rl} -n & \text{if } k=n-i+2 \\ 1 & \text{Otherwise} \end{array} \right.$$
Note that if $v$ is a maximally singular vector in $\mathfrak a$ with $\dim(Q_v)=2n-2$, then $v=z_i + z_j$ for some $i\neq j$ up to sign and scaling.
Let $A$ be the matrix associated to a given frame $\{v_1,\ldots,v_n\}$ consisting of maximally singular vectors in $\mathfrak a$.
Assume that $|u_i|=\dim(Q_{v_i}) \leq 2n-2$ for each $1\leq i\leq p$. By assumption, $v_i$ is parallel to either one of the $z_i$ or a sum of two $z_i$s for each $1\leq i\leq p$. Since $A$ is invariant under the positive and negative scalings of each vector $v_i$, we may assume that $v_i$ is either one of the $z_i$s or a sum of two of them.
Let $\tilde v_i$ be the coordinate representation of $v_i$ with respect to the set $\{z_1,\ldots, z_{n+1} \}$. For example, $\tilde z_1 =(1,0,\ldots,0)$ and the coordinate representation of $z_1 +z_2$ is $(1,1,0,\ldots,0)$. Putting $\tilde v_i$ as the $i$th row vector of a matrix for each $i=1,\ldots,p$, we obtain a $p$  by $n+1$ matrix $\widetilde A_p$.

\begin{claim}
{\it For each $1\leq i \leq p$, there exists $1\leq k_i \leq n+1$ so that the $(i, k_i)$th entry of $\widetilde A_p$ is $1$ and $\cup_{i=1}^p \{k_i\}$ has cardinality $p$.}
\end{claim}
\begin{proof}[Proof of the claim]
It suffices to prove that there exists a $p$ by $p$ submatrix of $\widetilde A_p$ with nonzero determinant.
Let $c_i \in \mathbb R^p$ denote the $i$th column vector of $\widetilde A_p$ for $1\leq i \leq n+1$.
Suppose that every $p$ by $p$ submatrix of $\widetilde A_p$ has zero determinant.
Then it can be seen that the dimension of the vector space $V_p$ generated by $c_1,\ldots,c_{n+1}$ is less than $p$.
Hence by reordering and relabeling the columns of $\widetilde A_p$, we may assume that $\{c_1,\ldots, c_q\}$ is a basis of $V_p$ for $q <p$. Let $\widetilde B_p$ be the $p$ by $n$ matrix whose $i$th column vector consists of the $c_i-c_{n+1}$ for $i=1,\ldots,n$. Then since $z_1+\cdots+z_{n+1}=0$, the $j$th row of $\widetilde B_p$ is actually the coordinate representation of $v_j$ with respect to $\{z_1,\ldots,z_n\}$. Since $v_1,\ldots,v_p$ are linearly independent vectors and $\{z_1,\ldots,z_n\}$ is a basis of $\mathfrak a$, the rank of $\widetilde B_p$ should be $p$. However the vector space generated by $c_1-c_{n+1}, \ldots c_n-c_{n+1}$ has dimension less than $p$, which contradicts the fact that the rank of $\widetilde B_p$ is $p$. Therefore there exists a $p$ by $p$ submatrix of $\widetilde A_p$ with nonzero determinant.
\end{proof}

According to the claim above, by reordering and relabeling the $z_i$'s, we may assume that $v_i = z_i $ for $1\leq i \leq q$ and $v_i=z_i+z_{k_i}$ for $q+1\leq i \leq p$.
The associated row vector $u_i$ to $v_i$ for $q+1\leq i \leq p$ can be described as follows. Let $(u_i)_j$ denote the $j$th column of $u_i$. Then put $(u_i)_j=1$ if $b_j \in Q_{z_i} \cup Q_{z_{k_i}}- Q_{z_i}\cap Q_{z_{k_i}}$ and otherwise $(u_i)_j=0$.
Since $\dim(Q_{z_i} \cap Q_{z_{k_i}})=1$, there are $n-1$ entries of $1$ in $u_i$ which come from $Q_{z_i}$ and $n-1$ entries of $1$ in $u_i$ which come from $Q_{z_{k_i}}$. For example, if $v=z_1+z_2=(1,\cdots,1,1,-n)+ (1,\cdots,1,-n,1)$, then by $ Q_v =  \sum_{\substack{ \alpha \in \Lambda^+ \\ \alpha(v) \neq 0 }} \mathfrak p_\alpha$, the set of $\alpha$'s with $\alpha(v) \neq 0$ is $\{a_i^*- a_{n+1}^* : i=1,\cdots,n-1              \}$, which excludes $a_n^*-a_{n+1}^*$, together with $\{a_j^*-a_n^* : j=1,\cdots,n-1     \}$.

Now we are ready to apply a new algorithm to the row vectors $\{ u_i : |u_i| = 2n-2 \}$. Note that $|u_i|=n$ for $1\leq i \leq q$ and $|u_i|=2n-2$ for $q+1 \leq i \leq p$.
First apply the previous algorithm to the row vectors $\{ u_i : |u_i|=n \}$ and then, starting with $t=q+1$, apply Stage $t$ of the following algorithm to the row vectors $\{ u_i : |u_i|=2n-2 \}$.

\begin{enumerate}
\item Step 1: Reorder the rows of $\{ u_i : |u_i|=2n-2 \}$ so that $N(i,t) \leq N(i+1,t)$ for each $q+1\leq i\leq p-t$
\item Step 2: Let $u_{j_t}$ be the top row. Then choose three $1$ entries of $u_{j_t}$ which are contained in $Q_{z_{j_t}}$ and let $k_t,l_t, r_t$  be the column numbers of these three entries.
\item Step 3: Delete the top row and columns $k_t,l_t$ and $r_t$, still calling the remaining vectors $u_{j_t}$ by their original names. Now increase the counter $t$ by $1$ and go to Step $1$.
\end{enumerate}

The essential thing about the new algorithm is to choose $1$ entries of $u_{j_t}$ which are contained in $Q_{z_{j_t}}$ and $z_{j_1}, z_{j_2},\ldots,z_{j_p}$ are all distinct. One advantage of this algorithm is that the algorithm does not fail until Stage $n-2$.
This comes from the distinctness of $z_{j_1}, z_{j_2},\ldots,z_{j_p}$ as follows.
If $|u_i|=2n-2$, we can write $u_i = u_i' +u_{k_i}''$  where $u_i'$ (respectively. $u_{k_i}''$) is the associated vector to $z_i$(respectively, $z_{k_i}$) with the $1$ entry of $Q_{z_i} \cap Q_{z_{k_i}}$ deleted. Hence $u_i'$ and $u_{k_i}''$ do not overlap and $|u_i'|=|u_{k_i}''|=n-1$.
Note that at most one $1$ entry of $u_i'$ is removed from each of the other rows.
For a similar reason, at most one $1$ entry of $u_{k_i}''$ is removed from $u_j$ if $j \neq k_i$. Thus at most two $1$'s of $u_i$ are removed from $u_j$ if $j\neq k_i$.
If $j=k_i$, then three $1$ entries of $u_{k_i}''$ can be removed from $u_{k_i}$ and thus three $1$'s of $u_i$ can be removed from $u_j$. However the case of $j=k_i$ can only happen at most once while performing the algorithm since $z_1,\ldots,z_p$ are all distinct. Hence we have that \begin{eqnarray}\label{u2} N(i, t )\geq 2n-2-3-2(t-2)=2n-2t-1.\end{eqnarray}
It can be easily checked from (\ref{u2}) that $N(i,n-2) \geq 3$. This means that the algorithm can only fail at Stages $n-1$ or $n$.

\begin{lemma}\label{lem:n-1dim}
Let $\{v_1,\ldots,v_n\}$ be any frame of $\mathfrak a$. Then each row of the associated matrix $A$ has at most one $1$ entry that does not overlap with any other row.
\end{lemma}
\begin{proof}
Recalling (\ref{eqnQv}), the number of $1$'s of $u_i$ that do not overlap with any other row is equal to the number of positive roots $\alpha$ such that
\begin{eqnarray}\label{n-1dim} \alpha(v_k)=0 \text{ for }k\neq i \text{ and }\alpha(v_i)\neq 0.\end{eqnarray}
Let $V_i$ be the vector subspace of $\mathfrak a$ generated by $v_1,\ldots,v_{i-1},v_{i+1},\ldots,v_n$.
If $V_i$ contains a regular vector, then there are no positive roots $\alpha$ such that $\alpha(v_k)=0$ for $k\neq i$ and $\alpha(v_i)\neq 0$. Suppose there are two positive roots $\alpha$ and $\beta$ satisfying (\ref{n-1dim}). Then $\alpha=t \beta$ for some positive real number $t$. This is forbidden unless $t=1$ since the root system is reduced. This implies the lemma.
\end{proof}

Assume that Stage $n-1$ fails. By (\ref{u2}), we have that $N(j_{n-1},n-1) \geq 1$.
If $N(j_{n-1},n-1) =1$, then at least two $1$'s of $u_{j_{n-1}}$ are removed from each of $u_{j_1},\ldots,u_{j_{n-1}}$ since $|u_{j_{n-1}}|=2n-2$.
Consider the submatrix $A_{n-1}$ of $A$ consisting of the rows $u_{j_1},\ldots,u_{j_{n-1}}$.
Then Lemma \ref{lem:n-1dim} implies that $A_{n-1}$ has at least $n(n+1)/2-1$ nonzero columns.
Hence there are at least $n(n+1)/2-1- 3(n-2)$ nonzero columns of $A_{n-1}$ not yet removed at the start of Stage $n-1$.
Since $n(n+1)/2-1 - 3(n-2) \geq 5$ for $n\geq 5$, there are at least four columns not yet removed and having a $1$ in at least one of the rows $u_{j_1}, \ldots, u_{j_{n-2}}$.
Let $c_1$ be one of such columns having a $1$ in row $u_k$ for some $k \in \{j_1,\ldots,j_{n-2} \}$.
As mentioned before, two of the three columns removed from $u_k$ have a $1$ in row $u_{j_{n-1}}$.
Put one of them back and remove $c_1$ instead. Then this procedure gives a $1$ back to $u_{j_{n-1}}$.
Doing the same procedure with another column not yet removed and having a $1$ in at least one of the rows $u_{j_1}, \ldots, u_{j_{n-2}}$, we can give one more $1$ back to $u_{j_{n-1}}$ so that Stage $n-1$ does not fail.

Even in case $N(j_{n-1},n-1) =2$, it can be seen that the algorithm could be modified in a similar way so that Stage $n-1$ does not fail. We have thus shown that the algorithm always succeeds until Stage $n-1$.

Now assume that Stage $n$ fails.
Let $r_i$ be the number of $1$ entries of $u_{j_n}$ removed at Stage $i$.
If $N(j_n,n)=0$, then it can be easily seen that
\begin{equation}\label{n0} (r_1,\ldots, r_{n-1})= (3,2,\ldots,2,1) \text{  or } (2,2,\ldots,2)\end{equation} up to order. The number of the columns not yet removed at the start of Stage $n$ is $n(n+1)/2- 3(n-1)$.
Since $n(n+1)/2- 3(n-1) \geq 3$ for $n\geq 5$, there are at least three columns not yet removed and not having a $1$ in row $u_{j_n}$. Choose three of them, say $c_1$, $c_2$ and $c_3$. Then we claim that it is possible to replace three of the removed columns having a $1$ in row $u_{j_n}$ with $c_1$, $c_2$ and $c_3$ by the same procedure as described in the case that Stage $n-1$ fails. If this were not possible, it would follow from (\ref{n0}) that two of $c_1$, $c_2$ and $c_3$ would have  $1$'s only in the row from which only one $1$ of $u_{j_n}$ is removed. However this contradicts Lemma \ref{lem:n-1dim}, and hence the claim holds. Thus Stage $n$ does not fail.

If $N(j_n,n)=1$, it can be easily checked that
\begin{equation}\label{n1} (r_1,\ldots, r_{n-1})= (3,2,\ldots,2,0), (3,2,\ldots,2,1,1) \text{ or }(2,2,\ldots,2,1) \end{equation} up to order.
Since $n(n+1)/2- 3(n-1)-1 \geq 2$ for $n\geq 5$, there are at least two columns not yet removed and not having a $1$ in row $u_{j_n}$. Except for only one case, it is possible to replace two of the removed columns having a $1$ in row $u_{j_n}$ with these two columns using a similar procedure as before, so that the Stage $n$ does not fail. The exceptional case is when $n=5$, $(r_1,r_2,r_3,r_4)= (3,2,2,0)$ up to order and one of the remaining two columns, denoted by $c_1$, has  one $1$ only in  row $u_p$ for $p\neq j_5$ from which no $1$ of $u_{j_n}$ is  removed. Let $c_2$ be one of the three columns removed from $u_p$. Then since $u_p$ has $1$ in $c_1$ that does not overlap with any other row, it follows from Lemma \ref{lem:n-1dim} that each of the three columns removed from $u_p$ has a $1$ in some row different from $u_p$. Hence we may assume that $c_2$ has a $1$ in some row $u_q$ for $q \neq p, j_5$. As $q \neq p, j_5$, two of the three columns removed from $u_q$ have a $1$ in row $u_{j_5}$. Denote by $c_3$ one of these two columns.
First put $c_2$ back and remove $c_1$ instead. Then put $c_3$ back and remove $c_2$ instead. This process gives a $1$ back to $u_{j_5}$. Let $c_1'$ be another remaining column  not having a $1$ in row $u_{j_5}$.
Due to Lemma \ref{lem:n-1dim}, $c_1'$ has a $1$ in some row different from $u_p$. This allows us to replace one of the removed columns having a $1$ in row $u_{j_5}$ with $c_1'$ so that we give one more $1$ back to $u_{j_5}$. Thus Stage $5$ does not fail.

One can easily see that a similar argument also works in the case of $N(j_n,n)=2$ so that Stage $n$ does not fail.
To see this, one should observe that there is at least one column not yet removed and not having a $1$ in  $u_{j_n}$ and, $(r_1,\ldots, r_{n-1})$ is equal to one of
$$(3,2,\ldots,2,1,0), (3,2,\ldots,2,1,1,1), (2,2,\ldots,2,0),(2,2,\ldots,2,1,1)$$ up to order.

We have shown that one can apply the algorithm successfully to the row vectors $\{u_i: |u_i| \leq 2n-2\}$. We continue with the first algorithm on the remaining row vectors $\{u_i : |u_i| \geq 3n-6\}$. Since $3n-6 \geq 3(n-2)$, the algorithm can only fail at Stages $n-1$ or $n$. By a similar argument as in the proof of the case of $\{u_i : |u_i| =2n-2\}$, it can be seen that one can perform the algorithm successfully. This completes the proof.
\end{proof}

\begin{remark}
Note that property {\bf E} always fails in the excluded cases of Proposition \ref{pro:slnR} for the following reason.
For Property {\bf E} to hold, the following inequality must be satisfied.
$$\dim(X)-\mathrm{rank}(X) \geq 3 \mathrm{rank}(X) \iff \dim(X) \geq 4 \mathrm{rank}(X).$$
If $X=\rsl_{n+1}\mathbb R/\rso(n+1)$, it is easy to see that the above inequality is equivalent to $n\geq 5$.
Hence Proposition \ref{pro:slnR} is optimal.
\end{remark}

Even if property {\bf E} does not hold for $X$, Theorem \ref{thm:eigenmatching2} might be true.
If this happens, then $\dim(X)-\mathrm{rank}(X)-1 \geq 2 \mathrm{rank}(X)$.
Only one of the excluded cases in Proposition \ref{pro:slnR} satisfies this inequality.
That one case is the $\rsl_5\mathbb R$ case. Indeed we prove that Theorem \ref{thm:eigenmatching2} holds for $\rsl_5\mathbb R$ without property {\bf E}

\begin{proposition}\label{prop:sl5r}
Theorem \ref{thm:eigenmatching2} holds for $\mathrm{SL}_5\mathbb R/\mathrm{SO}(5)$.
\end{proposition}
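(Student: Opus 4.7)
The plan is to follow the strategy used to prove Theorem \ref{thm:9.1}, but to bypass property $\bf E$, which cannot hold for $\rsl_5\mathbb R$ since $3\cdot\mathrm{rank}(X)=12>10=\dim(\mathfrak a^\bot)$. The saving inequality is $2\cdot\mathrm{rank}(X)=8\leq\dim(W\cap\mathfrak a^\bot)$, which still leaves enough room for the desired $2k$-frame. By Remark \ref{remark:propertyE} and the argument of Theorem \ref{thm:9.1}, it suffices to treat $k=\mathrm{rank}(X)=4$ with $v_1,\ldots,v_4$ an orthonormal frame of maximally singular vectors in $\mathfrak a$. The same construction produces elements $k_i'\in K$ with $d_K(k_i',\mathrm{id})<c_1\epsilon$, and the conclusion will follow once one exhibits pairwise orthogonal two-dimensional subspaces $P_i\subset Q_{v_i}$ with $P_1\oplus\cdots\oplus P_4\subset W$: orthonormal bases of the $P_i$'s rotated by the $k_i'$'s then yield the required $C_1\epsilon$-orthonormal $2k$-frame in $W$ satisfying the angle inequalities.

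Writing a unit normal of $W$ as $\ell_{\mathfrak a}+v$ with $v\in\mathfrak a^\bot$, the inclusion $P_i\subset W$ for $P_i\subset Q_{v_i}\subset\mathfrak a^\bot$ becomes the simpler condition $P_i\perp v$. If $v=0$ the condition is vacuous; otherwise each $Q_{v_i}\cap v^\perp$ has dimension at least $3$, so there is always room to choose $P_i$. The problem thus reduces to a purely linear-algebraic claim about the configuration of the $Q_{v_i}$'s inside $\mathfrak a^\bot$: for any frame of maximally singular $v_1,\ldots,v_4$ and any $v\in\mathfrak a^\bot$, there exist pairwise orthogonal 2-planes $P_i\subset Q_{v_i}\cap v^\perp$.

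I would prove this claim by analyzing the $4\times 10$ matrix $A$ associated to the frame and splitting into cases according to how many of $v_1,\ldots,v_4$ lie in the Weyl orbit of $w_1$ (with row weight $|u_i|=4$) versus $w_2$ (with $|u_i|=6$). In each case I would first exhibit explicit disjoint $2$-element subsets $T_i$ of the support of $u_i$ — a ``$2$-matching'' — giving axis-aligned pairwise orthogonal subspaces $P_i^{(0)}=\mathrm{span}\{b_j:j\in T_i\}\subset Q_{v_i}$, with at least two slack columns remaining in each row. I would then use this slack to deform each $P_i^{(0)}$ inside $Q_{v_i}\cap v^\perp$ while preserving pairwise orthogonality: since the supports $T_i$ are pairwise disjoint, the deformations interact only through the low-dimensional overlaps $Q_{v_i}\cap Q_{v_j}$ (which are one-dimensional for two type-$w_1$ vectors, and controlled in the mixed cases), and the matching can be chosen so that these overlaps sit outside the slack columns used for the perturbation.

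The main obstacle is the uniform verification of the case analysis across all Weyl-type multisets of $\{v_1,\ldots,v_4\}$, and in particular the tightest configuration in which all four $v_i$'s belong to the orbit of $w_1$. There every row of $A$ has minimal weight $|u_i|=4$ and the intersection pattern $|S_i\cap S_j|=1$ is most constraining; one must check by hand that the $2$-matching described in the all-$|u_i|=4$ analysis (three doubleton columns and all four singleton columns arranged via an out-degree-$1$ orientation of $K_4$) genuinely admits a cancellation of the $v$-component for arbitrary $v\in\mathfrak a^\bot$. This is an explicit though somewhat laborious verification, after which the $\rsl_5\mathbb R$ case is concluded without invoking property $\bf E$.
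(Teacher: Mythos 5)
Your central linear-algebraic claim --- that for \emph{any} frame of maximally singular vectors $v_1,\ldots,v_4$ in $\mathfrak a$ and any $v\in\mathfrak a^\perp$ there exist pairwise orthogonal $2$-planes $P_i\subset Q_{v_i}\cap v^\perp$ --- is false, and it fails precisely in the configuration you single out as ``the tightest'' case and promise to check by hand. Take $v_i=z_i$ for $i=1,\ldots,4$, all in the Weyl orbit of $w_1$, so each $Q_{z_i}$ is a $4$-dimensional coordinate subspace of the $10$-dimensional $\mathfrak a^\perp$, the pairwise intersections $Q_{z_i}\cap Q_{z_j}$ are $1$-dimensional coordinate lines, and together with the four private coordinates they exhaust all $10$ columns. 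For a generic $v$, each $Q_{z_i}\cap v^\perp$ is exactly $3$-dimensional, so a $2$-plane $P_i$ inside it can be annihilated by at most \emph{one} more coordinate functional, i.e.\ $P_i$ can be orthogonal to at most one of the three shared coordinate directions $b_{c_{ij}}$ $(j\neq i)$. But $P_i\perp P_j$ forces the coordinate functional of $b_{c_{ij}}$ to vanish on $P_i$ or on $P_j$, so each of the six pairs needs one of its two endpoints to ``spend'' its single allowance on that pair. Four vertices each covering at most one of six edges leaves two pairs uncovered, so no choice of $P_1,\ldots,P_4$ works. This is not a gap in your write-up of the case analysis; the claim itself is false, and no amount of ``deforming the axis-aligned $2$-matching'' can repair it.

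What is missing is the step the paper actually uses: for $\rsl_5\mathbb R$ the maximally singular lines of minimal type (the $z_i$'s) pairwise make a fixed angle $\arccos(\pm 1/4)$, bounded away from $\pi/2$, so no four of them are close to orthonormal. Hence for $\epsilon$ small enough, at least one $\widehat w_i$ in the reduction of Theorem~\ref{thm:9.1} is $\epsilon_0$-far from \emph{every} singular vector, and the corresponding $w_i$ is left regular with $Q_{w_i}=\mathcal F^\perp$, the full $10$-dimensional space. Once one row of the matrix is all $1$'s, the combinatorial matching only has to be carried out on the remaining three rows of weight $\geq 4$, which does fit (there are at least $9$ nonzero columns by Lemma~\ref{lem:n-1dim}), and then there is ample room in $W\cap\mathcal F^\perp$ for the last orthonormal pair. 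Your reduction ``to an arbitrary frame of maximally singular vectors'' discards exactly the near-orthogonality information that rules out the bad configuration, which is why the argument cannot close.
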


\begin{proof}
Following the proof of Theorem \ref{thm:9.1}, it suffices to prove Theorem \ref{thm:eigenmatching2} for $\epsilon$-orthonormal bases of a maximal flat $\mathcal F$.
Let $\{v_1,v_2,v_3,v_4\}$ be an $\epsilon$-orthonormal basis of a maximal flat $\mathcal F$.
In the case, $\rsl_5\mathbb R$, it is not difficult to see that there exists a sufficiently small $\epsilon>0$ such that any $\epsilon$-orthonormal frame must contain a regular vector in $\mathcal F$ away from the $\epsilon_0$-neighborhood of any singular vector.
For this reason, we may assume that every entry of $u_4$ is $1$. Then we only need a successful algorithm on the other three rows $u_1,u_2,u_3$ corresponding to maximally singular vectors. Let $A_3$ denote the matrix whose rows consist of $u_1,u_2,u_3$. According to Lemma \ref{lem:n-1dim}, $A_3$ has at least $9$ nonzero columns. Note that $|u_i|=4$ or $6$ for $i\neq 4$. As observed before, $N(j,t)\geq 4-t+1$ if $|u_{j_3}|=4$ and $N(j,t) \geq 8-2t-1$ if $|u_{j_3}|=6$. Hence only Stage $3$ can  possibly fail. If Stage $3$ fails and $|u_{j_3}|=4$, then $N(j_3,3)=2$. This implies that $|u_1|=|u_2|=|u_3|=4$ and $u_2, u_3$ must have a $1$ in one of the three columns removed from $u_1$. Since $|u_1|=4$, there exists a $1$ entry of $u_1$ that does not overlap with $u_2, u_3$. Thus we can give a $1$ back to $u_{j_3}$ so that Stage $3$ does not fail.

If Stage $3$ fails and $|u_{j_3}|=6$, then $N(j_3,3)=1$ or $2$. If $N(j_3,3)=1$, there are $9-6-1=2$ nonzero columns in $A_3$ not yet removed and not having a $1$ in row $u_{j_3}$, say $c_1$ and $c_2$. Noting that at least two $1$'s of $u_{j_3}$ are removed at each stage before Stage $3$, it is possible to replace two of the removed columns having a $1$ in row $u_{j_3}$ with $c_1$ and $c_2$ so that Stage $3$ does not fail. Also, the case $N(j_3,3)=2$ can be done similarly. Hence we now conclude that one can carry out the algorithm successfully on the row $u_1, u_2, u_3$.

We have shown that there exist pairwise orthogonal $3$-dimensional subspaces $P_1$, $P_2$ and $P_3$ such that $P_i \subset Q_{v_i}$ for $i=1, 2, 3$. Since $\dim(W\cap P_i) \geq 2$, we can choose orthonormal vectors $v_i'$, $v_i''$ in $W\cap P_i$ for each $i=1,2,3$. Noting that $\dim(X)=14$, $\dim(\mathcal F)=4$ and $\dim(W)=13$, it follows that $\dim(W\cap \mathcal F^\bot) \geq 9$. Hence we can choose $v_4'$, $v_4''$ in $W \cap \mathcal F^\bot$ so that $\{v_1',v_1'',\ldots,v_4',v_4''\}$ is an orthonormal $8$-frame contained in $W$.
Since $Q_{v_4}=\mathcal F^\bot$, the chosen vectors $v_1',v_1'',\ldots,v_4',v_4''$ satisfy the angle inequality in Theorem \ref{thm:eigenmatching2}. This completes the proof.
\end{proof}

Theorem \ref{propertye} together with Proposition \ref{prop:sl5r} completes the proof of Theorem \ref{thm:eigenmatching2} and thus Theorem \ref{thm:codim1Jac} is true.

\section{Rank $2$ irreducible symmetric spaces}

In this section we will prove Theorems \ref{rank2jacobian} and \ref{rank2thm2}.

\begin{proof}[Proof of Theorem \ref{rank2jacobian}]
First of all, note that it suffices to show Theorem \ref{thm:eigenmatching2} for any $\epsilon$-orthonormal basis of a maximal flat $\mathcal F$ and any $d$-dimensional subspace $W$ of $T_xX$. Let $v_1, v_2$ be an $\epsilon$-orthonormal basis of a maximal flat $\mathcal F$. Then it can be seen that there exists a sufficiently small $\epsilon>0$ such that any $\epsilon$-orthonormal frame must contain a regular vector in $\mathcal F$ away from the $\epsilon_0$-neighborhood of any singular vector. Hence we may assume that $Q_{v_2}=\mathcal F^\bot$.
As explained before, it suffices to consider the case that $v_1$ is maximally singular.

It follows from \cite[Proposition 2.11.4]{Eb} that $$t_X=\max_{v\in \mathcal F} \dim(\mathrm{Stab}_K(v)\cdot \mathcal F).$$ In fact the maximum is realized by a maximally singular vector $v$. Let $v_1$ be a maximally singular vector such that $\dim(Q_{v})$ is minimal among  maximally singular vectors. Then $t_X=\dim(X)-|u_1|$. If $d \geq t_X +2$,
\begin{align*}
\dim(W\cap Q_{v_1}) &= \dim(W)+\dim(Q_{v_1})-\dim(\mathrm{span}(W\cup Q_{v_1})) \\
&\geq  d+\dim(X)-t_X -\dim(X) = 2.\end{align*}
In other words, it is possible to choose an orthonormal $2$-frame given by vectors $v_1', v_1''$ in $W\cap Q_{v_1}$.
On the other hand if $\dim(W\cap \mathcal F^\bot)\geq 4$, then it is possible to choose another orthonormal $2$-frame given by vectors $v_2', v_2''$ in $W\cap \mathcal F^\bot$ such that $\{v_1', v_1'', v_2',v_2''\}$ is an orthonormal $4$-frame of $W$.
Since $$\dim(W\cap \mathcal F^\bot)\geq \dim(W)+\dim(\mathcal F^\bot)-\dim(X) =d-2,$$ we have that $\dim(W\cap \mathcal F^\bot)\geq 4$ if $d\geq 6$.
Therefore if $d\geq \max\{6, t_X+2\}$, we can conclude that Theorem \ref{thm:eigenmatching2} is still true even if we replace $W$ with any $d$-dimensional subspace.
\end{proof}

It can be easily checked that $t_X \geq 4$ for all rank $2$ irreducible symmetric spaces except the symmetric spaces associated to $\rsl_3\mathbb R$, $\rsp_4\mathbb R$ and $G_{2(2)}$. In the exceptional three cases, it holds that $t_X=3$.

\begin{proof}[Proof of Theorem \ref{rank2thm2}]
Let $\Omega^*(X,\mathbb R)$ be the space of $\mathbb R$-valued differential forms on $X$.
For $\omega \in \Omega^k(X,\mathbb R)$, define its sup norm by
$$\| \omega \|_\infty = \sup_{x\in X} \sup_{v_1,\ldots,v_k \in T^1_xX} |\omega_x(v_1,\ldots,v_k)|,$$
where $T^1_x X$ is the unit tangent sphere of $X$ at $x\in X$.

According to the van Est isomorphism, there is an isomorphism $$\mathcal J : \Omega^*(X,\mathbb R)^G \rightarrow H^*_c(G,\mathbb R).$$
Recall that Dupont \cite{Du76} gave  an explicit description for the isomorphism at the cochain level:
For a $G$-invariant $k$-form $\omega$ on $X$,
$$\mathcal J(\omega)(g_0,\ldots,g_k) =\int_{[g_0, \ldots, g_k]} \omega.$$
On the other hand, by using the barycentric straightening, define another map $\mathcal J_b(\omega) : G^{k+1} \rightarrow \mathbb R$ by
$$\mathcal J_b(\omega)(g_0,\ldots,g_k) =\int_{st_{k}(g_0, \ldots, g_k)} \omega$$
where $st_k(g_0,\ldots,g_k)$ is the barycentrically straightened simplex with vertices $g_0x,\ldots,g_kx$ for a fixed point $x \in X$.
Since $st_* : C_*(X) \rightarrow C_*(X)$ is $G$-equivariant and $G$-equivariantly homotopic to the identity, it easily follows that
$\mathcal J_b(\omega)$ is a continuous $G$-invariant cocycle and represents the same continuous cohomology class as $\mathcal J(\omega)$.
Furthermore since every $G$-invariant form is bounded and the volumes of straightened simplices of dimension $d\geq \max\{6, t_X+2\}$ are uniformly bounded from above by Theorem \ref{rank2jacobian},
$\mathcal J_b(\omega)$ is actually bounded if $\omega$ is a $G$-invariant differential form of degree $d\geq \max\{6, t_X+2\}$. Therefore the first statement of the theorem is completed.

Theorem \ref{rank2jacobian} implies that for $d\geq \max\{6,t_X+2\}$, the volumes of straightened $d$-simplices are uniformly bounded from above. Following the proof of Theorem \ref{codim1bounded}, the theorem immediately follows.
\end{proof}

\appendix

\section{Proof of Theorem \ref{propertye}}

This appendix provides the proof of Theorem \ref{propertye} for all the other higher rank irreducible symmetric spaces of noncompact type except the case of $\rsl_{n+1}\mathbb R/\mathrm{SO}(n+1)$.

\subsection{Classical simple Lie groups}

\subsubsection{$\rsl_{n+1}\mathbb C$}
 Recall that $\rsl_{n+1}\mathbb C=\{ g \in \mathrm{M}_{n+1} \mathbb C \ | \ \det(g)=1 \}$ where $\mathrm{M}_{n+1} \mathbb C$ is the set of square complex matrices of size $n+1$. Then the following are well known facts about $\rsl_{n+1}\mathbb C$.
\begin{enumerate}
\item $K=\mathrm{SU}(n+1)$
\item $\mathfrak g = \{  a \in \mathrm{M}_{n+1} \mathbb C \ | \ tr(a)=0 \}$
\item $\mathfrak a = \{ \mathrm{Diag}(a_1,\ldots,a_{n+1}) \ | \ \sum_{=1}^{n+1}a_i=0 \}$
\item $\Pi = \{ a_1^*-a_2^*, \ldots, a_n^* - a_{n+1}^* \}$
\end{enumerate}
It can be easily checked that the stabilizer group of $v=\mathrm{Diag}(1,\ldots,1,-n)$ in $K$ is $\mathrm{Stab}_K(v)=\mathrm{S}(\mathrm{U}(n)\oplus \mathrm{U}(1))$ and $v$ is a maximally singular vector in $\mathfrak a$ such that the dimension of $Q_v = (\mathrm{Stab}_K(v) \cdot \mathcal F)^\bot$ is minimal among the maximally singular vectors. The dimension of $Q_v$ is computed as follows:
$$\dim(Q_v)= \dim(\mathrm{SU}(n+1)) -\dim(\mathrm{S}(\mathrm{U}(n)\oplus \mathrm{U}(1)))=2n.$$
For the other maximally singular vector $w$ in $\mathfrak a$, it can be seen in a similar way that $\dim (Q_w) \geq  4n-4$. Note that for any linearly independent two vectors $v, v'$ in $\mathfrak a$ with $\dim(Q_v)=\dim(Q_{v'})=2n$,
the intersection $K_v \cap K_{v'}$ is isomorphic to $\mathrm{S}(\mathrm{U}(n) \oplus \mathrm{U}(1) \oplus \mathrm{U}(1))$. This implies that
\begin{eqnarray}\label{slC}
\dim(Q_v \cap Q_{v'}) &=& \dim(\mathrm{SU}(n+1)) -2\dim(\mathrm{S}(\mathrm{U}(n)\oplus \mathrm{U}(1))) \\ & &+\dim(\mathrm{S}(\mathrm{U}(n) \oplus \mathrm{U}(1) \oplus \mathrm{U}(1)))=2. \nonumber
\end{eqnarray}

First consider the case that there exists $p>0$ so that $|u_i|=2n$ for each $1\leq i \leq p$.
By (\ref{slC}), we have that $$N(i,t)\geq N(i,t-1)-2 \text{ and } N(j_d,d)\geq 2n-2(d-1).$$
Hence Stage $n$ it the only one that can fail, and $N(j_n,n)=2$. Assume that Stage $n$ fails. The fact of $N(j_n,n)=2$ and the repeated inequality of $N(i,t)\geq N(i,t-1)-2$ gives that
\begin{align}\label{ind}
N(j_k,2) &\leq  N(j_k,k)+2(k-2) \\
&\leq N(j_n,k)+2(k-2)\leq N(j_n,n)+2n-4=2n-2\nonumber
\end{align}
 for $k=2,\cdots,n$. This means that each $u_{j_2},\ldots,u_{j_n}$ must have two $1$'s in two of the three columns removed from $u_{j_1}$ during Step $2$ of Stage $1$. Since $2n>3$ for $n\geq 2$, there is an entry of $u_{j_1}$ that does not overlap with any other $u_{j_k}$. If we choose this entry of $u_{j_1}$ to remove at Step $2$ of Stage $1$ instead of the $1$ entry of $u_{j_1}$ that overlaps with $u_{j_n}$, the algorithm succeeds.

After carrying out the algorithm successfully on the $\{u_i : |u_i|=2n \}$, continue with the algorithm on the remaining row vectors $\{u_i : |u_i| \geq 4n-4 \}$. Since we remove three columns at each stage, the algorithm always succeeds if $4n-4 \geq 3n$ i.e., $n\geq 4$. Hence we only need to consider the cases of $\rsl_3 \mathbb C$ and $\rsl_4 \mathbb C$.

If $\mathrm{SL}_3 \mathbb C$, it is easily checked that $|u_1|=|u_2|=4$ and there are exactly two $k$'s with $a_{1k}=a_{2k}=1$. Hence the matrix $A$ can be written as
$$\left( \begin{array}{cccccc} 1 & 1&1&1&0&0 \\0&0&1&1&1&1 \end{array} \right).$$
If we choose the first $1$'s of $u_1$ and the last three $1$'s of $u_2$, the algorithm succeeds.

Now consider $\mathrm{SL}_4\mathbb C$. The number of columns of $A$ is $12$ and for each $i$ either $|u_i|=6$ or else $|u_i| \geq 8$. Thus the algorithm on the row vectors $\{u_1 : |u_i| \geq 8\}$ can only fail at Stage $3$.
Assume that Stage $3$ fails for the first time. Then $N(j_3,3)=2$. Since $|u_{j_3}|=8$, all the six columns removed at Stage $1$ and $2$ must have $1$ entries of $u_{j_3}$.
Note that there are $12-6=6$ columns not yet removed and not having a $1$ in row $u_{j_3}$.
Denote one of them by $c_1$. Then we may assume that $c_1$ has a $1$ in row $u_k$ for some $k\neq j_3$. Put back one of the three columns removed from $u_k$ and remove $c_1$ instead. Then we give a $1$ back to $u_{j_3}$ so that the algorithm does not fail at Stage $3$. In conclusion, we complete the proof in the case of $n\geq 2$.

\subsubsection{$\mathrm{Sp}_{2n}\mathbb C$}
The simple Lie group $\mathrm{Sp}_{2n}\mathbb C$ is defined by
$$\mathrm{Sp}_{2n}\mathbb C =\{ g \in \mathrm{SL}_{2n}\mathbb C \ | \ g^t J_{n,n}g=J_{n,n} \}$$
where $J_{n,n} =\left( \begin{array}{cc} O_n & I_n \\ -I_n & O_n \end{array} \right)$. Then we have the following facts.
\begin{enumerate}
\item $K=\mathrm{Sp}_{2n}\mathbb C \cap \mathrm{U}(2n) \cong \mathrm{Sp}(n)$
\item $\mathfrak a=\{ \mathrm{Diag}(a_1,\ldots,a_n)\oplus \mathrm{Diag}(-a_1,\ldots,-a_n) \ | \ a_i \in \mathbb R \}$
\item $\Pi = \{a_1^*-a_2^*,\ldots, a_{n-1}^*- a_n^*, 2a_n^* \}$.
\end{enumerate}
Denote $\mathrm{Diag}(a_1,\ldots,a_n)\oplus \mathrm{Diag}(-a_1,\ldots,-a_n)$ by $(a_1,\ldots,a_n)$.
Among the maximally singular vectors in $\mathfrak a$, the dimension of the stabilizer subgroup of $(1,0,\ldots,0)$ in $K$ is maximal. In fact, a straightforward computation shows that the stabilizer group is isomorphic to $\mathrm{U}(1) \times \mathrm{Sp}(n-1)$. Hence for any maximally singular vector $v$ in $\mathfrak a$, we have $$\dim (Q_v) \geq \dim (\mathrm{Sp}(n))-\dim(\mathrm{U}(1) \times \mathrm{Sp}(n-1))= 4n-2.$$
Since $4n-2 \geq 3n$ for $n\geq 2$, the algorithm always succeeds.

\subsubsection{$\mathrm{SO}_{2n}\mathbb C$}
The group $\rso_m \mathbb C$ is defined by
$$\rso_m \mathbb C=\{ g \in \mathrm{SL}_m\mathbb C \ | \ g^t g= I_{m} \}.$$
Recall the following well-known facts in the case of $m=2n$.
\begin{enumerate}
\item $K= \mathrm{SO}(2n)$
\item $\mathfrak a = \{ a_1 E \oplus \cdots \oplus a_n  E \ | \ a_i \in \mathbb R \}$ where $E=\left( \begin{array}{cc} 0 & i \\ -i & 0 \end{array} \right)$.
\item $\Pi= \{a_1^*- a_2^*,\ldots,a_{n-1}^*-a_n^*, a_{n-1}^*+a_n^* \}$
\end{enumerate}
Let $(a_1,\ldots,a_n)$ denote $ a_1 E \oplus \cdots \oplus a_n  E $ in $\mathfrak a$. It can be easily checked that the dimension of the stabilizer subgroup of $(1,0,\ldots,0)$ in $K$ is maximal among the maximally singular vectors. Noting that the stabilizer subgroup of $(1,0,\ldots,0)$ in $K$ is $\mathrm{SO}(2)\oplus \mathrm{SO}(2n-2)$, we have that for any maximally singular vector $v$ in $\mathfrak a$, $$\dim(Q_v) \geq \dim(\mathrm{SO}(2n))-\dim(\mathrm{SO}(2)\oplus \mathrm{SO}(2n-2))=4n-4.$$
If $4n-4 \geq 3n$ i.e., $n\geq 4$, then the algorithm always succeeds. We only need to show that the algorithm succeeds for $n=2, 3$. Notice that $\mathrm{SO}_4\mathbb C$ and $\mathrm{SO}_6\mathbb C$ are isogenous to $\mathrm{SL}_2\mathbb C \times \mathrm{SL}_2\mathbb C$ and $\mathrm{SL}_4\mathbb C$ respectively.
Therefore we conclude that property {\bf E} holds for  $n \geq3$.

\subsubsection{$\mathrm{SO}_{2n+1}\mathbb C$}
As seen in the previous section, a maximal abelian subalgebra of $\mathrm{SO}_{2n+1}\mathbb C$ can be written as
$$\mathfrak a =\{ a_1 E \oplus \cdots \oplus a_n  E \oplus 0 \ | \ a_i \in \mathbb R \}.$$
As in the previous case, it can be seen that for any maximally singular vector $v$,
$$\dim(Q_v) \geq \dim(\mathrm{SO}(2n+1))-\dim(\mathrm{SO}(2)\oplus \mathrm{SO}(2n-1))=4n-2.$$
Since $4n-2 \geq 3n$ for $n\geq 2$, the algorithm always succeeds.

\subsubsection{$\rsp_{2n}\mathbb R$}
The reduced root system of $\rsp_{2n}\mathbb R$ can be described as follows:
\begin{enumerate}
\item $\mathfrak a=\{ (a_1,\ldots,a_n) \ | \ a_i \in \mathbb R \}$
\item $\Delta^+ =\{ a_i^* \pm a_j^*, \ 2a_k^* \ | \ i<j, \ 1\leq k \leq n \}.$
\item $\Pi = \{ a_1^*-a_2^*,\ldots, a_{n-1}^*-a_n^*, 2a_n^*\}.$
\end{enumerate}

Then a maximally singular vector in the closed positive Weyl chamber $\overline{\mathfrak a^+}$ is one of the following.
$$w_1=(1,0,\ldots,0),  \ w_2=(1,1,0,\ldots,0), \ldots, \ w_n=(1,\ldots,1).$$
Since every root space has dimension $1$, it follows from (\ref{eqnQv}) that $\dim(Q_{v})$ is equal to the number of positive roots $\alpha$ with $\alpha(v)\neq 0$. Hence
$$\dim(Q_{w_1}) =2n-1, \  \dim(Q_{w_2})= 4n-5, \ldots, \ \dim(Q_{w_n})= n(n+1)/2.$$
Note that $A$ is an $n$ by $n^2$ matrix.

We will first show that one can carry out the algorithm successfully on the row vectors $\{u_i : |u_i|=2n-1\}$.
Assume that there is a $p >0$ such that $|u_i|=2n-1$ for each $1\leq i\leq p$. For $1\leq i < j\leq p$, there are exactly two $k$'s with $a_{ik}=a_{jk}=1$. This can be shown by counting the number of positive roots $\alpha$ with $\alpha(w_i) \neq 0$ and $\alpha(w_j) \neq 0$. For instance, for $w_1 = (1,0,\ldots,0)$ and $w_2=(0,1,0,\ldots,0)$, the set of positive roots $\alpha$ with $\alpha(w_i) \neq 0$ and $\alpha(w_j) \neq 0$ coincides with $\{ a_1^*\pm a_2^* \}$. Thus we have
\begin{eqnarray}\label{spn} N(i,t)\geq N(i,t-1)-2.\end{eqnarray}

Assume that the algorithm fails at Stage $d$ for the first time. Then (\ref{spn}) implies that
$$(2n-1)-2(d-1) \leq N(j_d,d)\leq 2$$
and thus  Stage  $n$ is the only possible stage at which the algorithm can fail.

Now we assume that Stage $n$ fails. Then clearly $N(j_n,n)=1$ or $2$.
If $N(j_n,n)=1$, a repeated application of (\ref{spn}) gives that $N(j,2)\leq 2n-3$ for all $2\leq j\leq n$. This means that each $u_{j_2},\ldots,u_{j_n}$ must have two $1$'s in two of the three columns removed from $u_{j_1}$.
Since $(2n-1)-3\geq 2$ for $n\geq 3$, there exist two $1$ entries of $u_{j_1}$ that do not overlap with any other row and have not yet been removed at the start of Stage $n$.
Choose these two $1$'s of $u_{j_1}$ at Stage $1$ instead of choosing the two $1$'s of $u_{j_1}$ that overlap with $u_{j_n}$. Then we give two $1$'s back to $u_{j_n}$ so that Stage $n$ does not fail.

If $N(j_n,n)=2$, there are $n^2-3n+3$ columns not yet removed. Since $n^2-3n+3 \geq 3$ for $n\geq 3$, there is a column $c_1$ not yet removed and not having a $1$ in row $u_{j_n}$. Hence we may assume that $c_1$ has a $1$ in $u_q$ for some $q\neq j_n$. It can be easily seen from $N(j_n,n)=2$ that $u_{j_n}$ must have a $1$ in one of the three columns removed from $u_q$. Put this column back and remove $c_1$ instead. Then Stage $n$ does not fail.

So far we have shown that one can perform the algorithm successfully on the row vectors $\{u_i : |u_i|=2n-1 \}$ if $n\geq 3$.
Indeed, it is impossible to perform the algorithm successfully in the case of $\rsp_4\mathbb R$ since the number of columns of $A$ is just $4 < 3\times 2=6$. We now continue with the algorithm on the remaining row vectors.
If $n\geq 5$, then each of the remaining row vectors has at least $3n$ entries of $1$. Thus the algorithm always succeeds.

Consider the case of $\rsp_6\mathbb R$. The matrix $A$ is a $3$ by $9$ matrix and $|u_i|= 5,6$ or $7$.
We only need to show that the algorithm succeeds on the remaining vectors $\{u_i : |u_i|\geq 6 \}$.
Since $|u_i|\geq 6$, only Stage $3$ can possibly fail.
Assume that Stage $3$ fails for the first time and $|u_i|\geq 6$. If $N(j_3,3)=0$, it is clear that $|u_i|=6$ and $u_{j_3}$ must have a $1$ in all of the six columns removed at Stage $1$ and $2$. Since there are three columns not yet removed and not having a $1$ in row $u_{j_3}$, it is possible to give three $1$'s back to $u_{j_3}$ in a similar way as before. Thus Stage $3$ does not fail.

If $N(j_3,3)=1$, then $u_{j_3}$ must have a $1$ in at least five of the six columns removed at Stage $1$ and $2$.
Two of the three columns not yet removed do not have a $1$ in row $u_{j_3}$.
This allows us to give two $1$'s back to $u_{j_3}$ in a similar way. Hence Stage $3$ does not fail.
Also the case of $N(j_3,3)=2$ can be done similarly, so that Stage $3$ does not fail.
Thus we are done for the case of $\rsp_6\mathbb R$.

Next consider the case of $\rsp_8\mathbb R$. It can be easily checked that $A$ is a $4$ by $16$ matrix and each $|u_i|$ is either $7, 10, 11$ or $12$. We only need a successful algorithm on on the row vectors $\{u_i :  |u_i|\geq 10\}$.
Since $|u_i|\geq 10$, Stage $4$ can only fail with $N(j_4,4)=1$ or $2$.
If Stage $4$ fails for the first time, then $u_{j_4}$ must have a $1$ in at least eight of the nine columns removed at Stage $1$, $2$ and $3$.
Note that there are at least five columns not yet removed and not having a $1$ in row $u_{j_4}$.
By using these five columns, one can modify the algorithm in a similar way as done before so that Stage $4$ does not fail. Therefore property {\bf E} holds for $\rsp_{2n}\mathbb R$ if $n\geq 3$.

\subsubsection{$\rsl_{n+1}\mathbb H$}
A maximal compact subgroup of $\rsl_{n+1}\mathbb H$ is $\mathrm{Sp}(n+1)$ and its maximal abelian subalgebra $\mathfrak a$ is identified with $$\mathfrak a=\left\{(a_1,\ldots,a_{n+1}) \ \bigg| \ \sum_{i=1}^{n+1} a_i =0 \right\}$$
with $\Pi = \{a_1^*-a_2^*,\ldots,a_n^*-a_{n+1}^* \}$. It can be easily checked that the dimension of the stabilizer subgroup of $(1,\ldots,1,-n)$ in $\rsp(n+1)$ is $\mathrm{Sp}(n)\oplus \mathrm{Sp}(1)$ and moreover is maximal among the maximally singular vectors in $\mathfrak a$. Hence it follows that for any maximally singular vector $v$ in $\mathfrak a$,
$$\dim(Q_v) \geq \dim(\mathrm{Sp}(n+1))-\dim(\mathrm{Sp}(n)\oplus \mathrm{Sp}(1))=4n.$$
Since $4n >3n$, the algorithm always succeeds.

\subsubsection{$\mathrm{SU}(m,n)$} Assume that $m\geq n \geq 2$. Then we have that
\begin{enumerate}
\item $K=\mathrm{S}(\mathrm{U}(m)\oplus \mathrm{U}(n))$
\item $\mathfrak a=\{ (a_1,\ldots,a_n) \ | \ a_i \in \mathbb R \}$
\item $\Pi = \{a_1^*-a_2^*,\ldots,a_{n-1}^*-a_n^*, 2 a_n^*\}.$
\end{enumerate}
Except for the $\su(2,2)$ case, the dimension of the stabilizer subgroup of $(1,0,\ldots,0)$ in $K$ is maximal among the maximally singular vectors in $\mathfrak a$ and the stabilizer subgroup is isomorphic to $\mathrm{S}(\mathrm{U}(1)\times \mathrm{U}(m-1)\times \mathrm{U}(n-1))$. We remark that in the $\su(2,2)$ case, the stabilizer subgroup of $(1,1)$ has maximal dimension $4$ among the maximally singular vectors. Hence in all cases except $\su(2,2)$,
\begin{eqnarray*}
\dim(Q_v) &\geq& \dim(\mathrm{S}(\mathrm{U}(m)\oplus \mathrm{U}(n)))-\dim(\mathrm{S}(\mathrm{U}(1)\times \mathrm{U}(m-1)\times \mathrm{U}(n-1))) \\ &=& 2(m+n)-3
\end{eqnarray*}
for any maximally singular vector $v$ in $\mathfrak a$. Since $2(m+n)-3 \geq 3n$ for all $(m,n)\neq (2,2)$, the algorithm always succeeds.
We will deal with the remaining case of $\su(2,2)$ in next section. Note that $\su(2,2)$ is isogenous to $\rso(4,2)$.

\subsubsection{$\mathrm{SO}(m,n)$, $m\geq n \geq 2$} The simple Lie group $\mathrm{SO}(m,n)$ is defined by
$$\mathrm{SO}(m,n) = \left\{ g\in \rsl_{m+n}\mathbb R \  \bigg| \ g^t \left( \begin{array}{cc} I_m & O_{m\times n} \\ O_{n\times m} & -I_n \end{array} \right) g =  \left( \begin{array}{cc} I_m & O_{m\times n} \\ O_{n\times m} & -I_n \end{array} \right) \right\}$$
Its maximal abelian subalgebra $\mathfrak a =\{ (a_1,\ldots,a_n) \ | \ a_i \in \mathbb R \}$ can be described as follows.
To each $(a_1,\ldots, a_n) \in \mathfrak a$, one can associate a square matrix $a$ of size $(m+n)$ with all $a_{i,j}=0$ but $a_{m-i+1,m+i}=a_{m+i,m-i+1}=a_i$ for $i=1,\ldots,n$. Then it can be seen that the minimum of $\dim(Q_v)$ among the maximally singular vectors in $\mathfrak a$ is $m+n-2$ and the second minimum for $\dim(Q_v)$ is either $2n+2m-7$ or $n(2m-n-1)/2$ depending on $m, n$. For example, for $w_1=(1,0,\ldots,0)$, $w_2=(1,1,0,\ldots,0)$ and $w_3 = (1,\ldots,1)$, one can check that $\dim(Q_{w_1}) = m+n-2$, $\dim(Q_{w_2})=2n+2m-7$ and $\dim(Q_{w_3})=n(2m-n-1)/2$  by computing the stabilizer subgroup of each $w_i$ in $K$.  Furthermore, note that $\dim(Q_v \cap Q_{v'})=2$ for any two linearly independent vectors $v, v'$ with $\dim(Q_v)=\dim(Q_{v'})=m+n-2$.
We consider two cases: $m\neq n$ or $m=n$.

\vspace{3mm}

{\bf Case I : $m\neq n$}

\smallskip

First consider the case that there exists $p>0$ such that $|u_i|=m+n-2$ for $1\leq i \leq p$. Since for $i\neq j$ there are exactly two $k$'s with $a_{ik}=a_{jk}=1$, we have \begin{eqnarray}\label{somn} N(i,t) \geq N(i,t-1) -2.\end{eqnarray}
If Stage $d$ fails for the first time, then $$m+n-2-2(d-1) \leq N(j_d,d) \leq 2.$$
This implies that Stage $n$ can only fail in the case that $m= n+1$ or $n+2$. In all other cases, the algorithm succeeds.

Consider the case that $m=n+1$ and the algorithm fails at Stage $n$. Then it is easy to see that $N(j_n,n)=1$ or $2$. If $N(j_n,n)=1$, one can notice that $A$ is an $n$ by $n^2$ matrix and $|u_i|=2n-1$ for $1\leq i\leq p$. This is exactly the same situation as with the $\rsp_{2n}\mathbb R$ case. Following the proof in the $\rsp_{2n}\mathbb R$ case, one can prove that the algorithm succeeds if $n\geq 3$.
If $n=2$, this is the case of $\rso(3,2)$. Since $\rso(3,2)$ is isogenous to $\rsp_4\mathbb R$, the algorithm always fails in the case of $\rso(3,2)$ as seen before.

We now consider the case that $m=n+2$ and Stage $n$ fails. Then $N(j_n,n)=2$ and $|u_i|=2n$ for $1\leq i\leq p$. Repeated application of (\ref{somn}) gives that $N(j,2)\leq 2n-2$ for all $2\leq j\leq n$. This means that each $u_2,\ldots,u_n$ must have two $1$'s in two of the three columns removed from $u_1$. Since $2n>3$ for $n\geq 2$, there is a $1$ entry of $u_1$ that does not overlap with any other $u_j$ and has not yet been removed. Choose this $1$ entry of $u_1$ at Step $2$ of Stage $1$ instead of the $1$ entry of $u_1$ that overlaps with $u_{j_n}$. Then Stage $n$ does not fail.

We have shown that one can apply the algorithm successfully to the row vectors $\{ u_i \ : \ |u_i|=m+n-2 \}$ if $(m,n)\neq(3,2)$. We now continue with the algorithm on the remaining rows $\{ u_i : |u_i| \geq \min \{ 2m+2n-7 \text{ or } n(2m-n-1)/2 \} \}$.
Note that if $(m,n) \neq (3,2), (4,2), (4,3), (5,4)$, $$\min \{ 2m+2n-7 \text{ or } n(2m-n-1)/2 \} \geq 3n.$$
Hence the algorithm always succeeds on the remaining row vectors.
We only need to check the cases of $\rso(4,2)$, $\rso(4,3)$ and $\rso(5,4)$.

In the case of $\rso(4,2)$, $A$ is a $2$ by $6$ matrix and, each $|u_i|=4$ or $5$. In this case, one can directly show that it is possible to pick three $1$ entries from each row of $A$ so that all of the $6$ choices are in different columns.

In the case of $\rso(4,3)$, $A$ is a $3$ by $9$ matrix and $|u_i| = 5, 6$ or $7$. The algorithm applied to the remaining row vectors $\{u_i : |u_i| \geq 6 \}$ can only fail at Stage $3$ with $|u_{j_3}|\geq 6$.
If $N(j_3,3)=0$, this means that $u_{j_3}$ must have a $1$ in all of the six columns removed at Stage $1$ and $2$.
There are three columns not yet removed and not having a $1$ in row $u_{j_3}$. Then these three columns allow us to give three $1$'s back to $u_{j_3}$ so that the algorithm does not fail at Stage $3$.
A similar argument works as well in the other case of $N(j_3,3)=1$ or $2$.

There only remains the case $\rso(5,4)$.
In this case, $A$ is a $4$ by $16$ matrix and either $|u_i|=7$ or $|u_i| \geq 10$ for $i=1,2,3,4$. The algorithm on the remaining row vectors $\{u_i : |u_i|\geq 10\}$ can only fail at Stage $4$ with $N(j_4,4) \geq 1$ and $|u_{j_4}| \geq 10$.
If $N(j_4,4)\leq 2$, then $u_{j_4}$ must have a $1$ in at least eight of the nine columns removed at Stage $1$, $2$ and $3$. Note that there are at least $16-9-2=5$ columns not yet removed and not having a $1$ in row $u_{j_4}$. As before, we can modify the algorithm so that Stage $4$ does not fail.
Finally we conclude that property {\bf E} holds for all $m> n$ except $(m,n)=(3,2)$.

\vspace{3mm}

{\bf Case II : $m= n$}

\smallskip

First of all, note that $\rso(2,2)$ and $\rso(3,3)$ are isogenous to $\rsl_2\mathbb R \times \rsl_2\mathbb R$ and $\rsl_4\mathbb R$ respectively. These are the cases in which the algorithm always fails and hence we assume that $n\geq 4$.
Recall that $A$ is an $n$ by $n(n-1)$ matrix and the minimum for $|u_i|$ is $2n-2$. The second minimum for $|u_i|$ is $n(n-1)/2$ if $n=4,5,6$ and otherwise it is $(4n-7)$. As in the case of $m\neq n$, we first consider the case that there exists $p>0$ such that $|u_i|=2n-2$ for $1\leq i \leq p$. If Stage $d$ fails, then we have \begin{eqnarray}\label{soon} 2n-2-2(d-1) \leq N(j_d, d) \leq 2 \end{eqnarray}
and so $d=n-1$ or $n$.

Assume that Stage $n-1$ fails for the first time. Then it is easy to see that $N(j_{n-1},n-1)=2$ by (\ref{soon}). This implies that each of $u_{j_2},\ldots, u_{j_{n-1}}$ must have two $1$'s in two of the three columns removed from $u_1$ during Step $2$ of Stage $1$.
Since $(2n-2)-3-2 \geq 1$ for $n\geq 4$, there exists at least one $1$ entry of $u_1$ which has not yet been removed and does not overlap with any other $u_{j_k}, k=2,\cdots,n-1$. Choose this entry to remove at Stage $1$ instead of choosing the $1$ entry of $u_1$ that overlaps with $u_{j_{n-1}}$.
Then the algorithm does not fail at Stage $n-1$.

Now we may assume that Stage $n$ fails. Then $0\leq N(j_n,n) \leq 2$. First consider the case of $N(j_n,n)=0$.
Then it can be seen that each of the $u_2,\ldots,u_n$ must have two $1$'s in two of the three columns removed from $u_1$.
Since $2n-2\geq 6$, there are at least three $1$'s of $u_1$ that do not overlap with any other row. On the other hand, note that Lemma \ref{lem:n-1dim} is true in the case of $\rso(n,n)$ since every root space has dimension $1$. Hence the case of $N(j_n,n)=0$ can never happen.

Next consider the case $N(j_n,n)=1$.
There are $n^2-4n+3$ columns not yet removed.
Since $n^2-4n+3 \geq 3$ for $n\geq 4$, there are at least two columns not yet removed and not having a $1$ in row $u_{j_n}$.
As before, it is not difficult to show that one can replace two of the columns already removed before the start of Stage $n$ and having a $1$ in row $u_{j_n}$ with the remaining two columns. This replacement gives two $1$'s back to $u_{j_n}$ and the algorithm does not fail at Stage $n$. A similar argument works well for the the case of $N(j_n,n)=2$. Therefore we conclude that one can perform the algorithm successfully on the row vectors $\{u_i : |u_i|=2n-2 \}$.

We now continue with the algorithm on the remaining row vectors $\{ u_i : |u_i| \geq \min\{ 4n-7, n(n-1)/2 \}\}$. If $n\geq 7$, then  $ \min\{ 4n-7, n(n-1)/2 \} \geq 3n$ and thus the algorithm always succeeds. Hence we only need a successful algorithm on the remaining rows when $n=4,5,6$.

First consider the case of $n=6$. Since $\min\{ 4n-7, n(n-1)/2 \} = 15$, only Stage $6$ can fail.
Assume that Stage $6$ fails for the first time and $|u_{j_6}|\geq 15$. Then $u_{j_6}$ must have a $1$ in at least thirteen of the fifteen columns removed at Stage $1,2,3,4$ and $5$. Note that there are $30-15=15$ columns not yet removed.
Since $N(j_6,6)\leq 2$, there are at least thirteen columns not yet removed and not having a $1$ in row $u_{j_6}$. Armed with Lemma \ref{lem:n-1dim}, this situation enables us to modify the algorithm so that Stage $6$ does not fail.

Next consider the case of $n=5$. One can check the following:
\begin{enumerate}
\item $|u_i| \in \{ 8, 10, 13, 14, 15\}$
\item For $i\neq j$, if $|u_i|=|u_j|=8$, then $|u_i \cap u_j | =2$.
\item For $i\neq j$, if $|u_i|=8$ and $|u_j|=10$, then $|u_i \cap u_j | =4$.
\item For $i\neq j$, if $|u_i|=10$ and $|u_j|=10$, then $|u_i \cap u_j | =6$.
\end{enumerate}
Since we are now carrying out the algorithm on the remaining rows $\{ u_i : |u_i| \geq 10 \}$, only Stage $4$ or $5$ can only fail. If Stage $4$ with $|u_{j_4}| \geq 10$ fails, it can be checked that $1\leq N(j_4,4) \leq 2$, $|u_{j_4}|=10$ and $|u_{j_k}|\leq 10$ for $k=1,2,3$. Moreover $u_{j_4}$ must have a $1$ in at least eight of the nine columns removed at Stages $1,2$ and $3$.
For $i\neq j$, if $|u_i| \leq 10$ and $|u_j|\leq 10$, then one can easily check that $|u_i|+|u_j|-|u_i\cap u_j|=14$.
This means that there are at least $14-9-2=3$ columns not yet removed and not having a $1$ in row $u_{j_4}$ and having a $1$ in row $u_{j_1}$ or $u_{j_2}$. Then it can be easily shown that it is possible to replace two of the columns removed before Stage $4$ and having a $1$ in row $u_{j_4}$ with two of the three columns as mentioned just before. This replacement gives two $1$'s back to $u_{j_4}$ and then the algorithm does not fail at Stage $4$.

Assume that Stage $5$ with $|u_{j_5}| \geq 10$ fails. If $N(j_5,5)=0$, then $|u_{j_5}|=10$, $|u_j| \leq 10$ for all $j$. Let $r_i$ be the number of $1$'s of $u_{j_5}$ removed from $u_{j_i}$. Then $(r_1,r_2,r_3,r_4) = (3,3,3,1) \text{ or } (3,3,2,2)$ up to order. There are $20-12=8$ columns not yet removed and not having a $1$ in row $u_{j_5}$.
A similar argument as before together with Lemma \ref{lem:n-1dim} enables us to replace three of the removed columns having a $1$ in row $u_{j_5}$ with three of the eight columns. Then the algorithm does not fail at Stage $5$.

If $N(j_5,5)=1$, then $|u_{j_5}|= 10$ or $13$. In both cases, it can be checked that $(r_1,r_2,r_3,r_4)$ is one of $(3,3,3,3), (3,3,3,0), (3,3,2,1) \text{  or }(3,2,2,2)$, up to order.
There are $8-1=7$ columns not yet removed and not having a $1$ in row $u_{j_5}$. Again one can replace three of the removed columns having a $1$ in row $u_{j_5}$ with three of the seven columns. Then Stage $5$ does not fail.

If $N(j_5,5)=2$, then $|u_{j_5}|= 10, 13$ or $14$.
Observing that $(r_1,r_2,r_3,r_4)$ is one of $(3,3,3,3),(3,3,3,2), (3,3,2,0), (3,3,1,1), (3,2,2,1) \text{ or } (2,2,2,2)$ up to order and
there are $8-2=6$ columns not yet removed and not having a $1$ in row $u_{j_5}$,
it can be easily seen that one of the removed columns having a $1$ in row $u_{j_5}$ can be replaced with one of the six columns. Then the replacement gives a $1$ back to $u_{j_5}$ so that the algorithm does not fail at Stage $5$. Therefore we are done for  $\rso(5,5)$ case.

Finally consider the case of $n=4$. Then $|u_i|= 6$ or $9$. The algorithm on the remaining row vectors $\{u_i : |u_i|=9 \}$ can only fail at Stage $4$. If the algorithm fails at Stage $4$ for the first time and $N(j_4,4) =0$, then $u_{j_4}$ must have a $1$ in all of the nine columns removed before the start of Stage $4$. There are three columns not yet removed and none of them has a $1$ in row $u_{j_4}$. Then it is possible to replace three of the removed columns with the remaining three columns. The replacement gives three $1$'s back to $u_{j_4}$ so that the algorithm does not fail at Stage $4$. A similar argument can be made for the other cases of $N(j_4,4)=1, 2$ so that Stage $4$ does not fail. This completes the proof for $n\geq 4$.

\subsubsection{$\mathrm{Sp}(m,n)$}
Similarly to the case of $\mathrm{SU}(m,n)$, one can show that if $(m,n)\neq (2,2)$,
\begin{eqnarray*}
\dim(Q_v) &\geq& \dim(\mathrm{Sp}(m)\oplus \mathrm{Sp}(n))-\dim(\mathrm{Sp}(1)\times \mathrm{Sp}(m-1)\times \mathrm{Sp}(n-1)) \\ &=& 4(m+n)-5
\end{eqnarray*}
for any maximally singular vector $v$. Since $4(m+n)-5 \geq 3n$ for any $m\geq n\geq 2$, the algorithm always succeeds.
Even in the case of $(m,n)=(2,2)$, it holds that for any maximally singular vector $v$, $\dim(Q_v) \geq 10 > 3 \times 2$.
Therefore the algorithm always succeeds in any case.

\subsubsection{$\mathrm{SO}^*(2n)$} The simple Lie group $\mathrm{SO}^*(2n)$ is defined by
$$\mathrm{SO}^*(2n) = \{ g \in \mathrm{SU}(n,n) \ | \ g^t I_{n,n} J_{n,n} g= I_{n,n} J_{n,n} \}$$
where $I_{n,n}= \left( \begin{array}{cc} I_n & O_n \\ O_n  & -I_n \end{array} \right)$ and  $J_{n,n}= \left( \begin{array}{cc}  O_n & I_n \\ -I_n & O_n\end{array} \right)$.
Then its maximal compact subgroup is
$$K=\mathrm{SO}^*(2n) \cap \s (\ru(n)\oplus \ru(n))=\left\{ \left( \begin{array}{cc} B & O_n \\ O_n & \overline B \end{array} \right) \ \bigg| \ B \in \ru(n) \right\}.$$
Now we assume that $n=2k$. Note that the rank of the symmetric space associated to $\rso^*(2n)$ is $[n/2]=k$ and its maximal abelian subalgebra is
$$\mathfrak a = \left\{ \left( \begin{array}{cc} O_n & C \\ -C & O_n \end{array} \right) \ \bigg| \ C=a_1 \left( \begin{array}{cc} 0 & 1 \\ -1 &0 \end{array} \right) \oplus \cdots \oplus a_k \left( \begin{array}{cc} 0 & 1 \\ -1 &0 \end{array} \right)   \right\}.$$

First note that $\rso^*(8)$ is isogenous to $\rso(6,2)$ for which we are already done. Hence from now on we assume $k\geq 3$.
One can check that if $k \geq 3$, the dimension of the stabilizer subgroup of $(1,0,\ldots,0)$ is maximal among the maximally singular vectors and moreover the stabilizer subgroup is isomorphic to $\rsp(1) \times \ru (n-2)$. Hence for any maximally singular vector $v$, we have
$$\dim(Q_v) \geq \dim(\ru(n))-\dim( \rsp(1)\times \ru(n-2))= 4n-7 =8k-7.$$
Since $8k-7 \geq 3k $ for $k\geq 3$, the algorithm always succeeds.
In a similar way, it can be shown even in the case of $n=2k+1$ that any row of $A$ has at least $3k$ entries with $1$.
Thus we are done.

\subsection{Exceptional simple Lie groups}

\subsubsection{$E_6$}
First note that the real dimension of each root space is $2$.
Recall that the dimension of $Q_v$ for a maximally singular vector $v$ in $\mathfrak a$ is computed by counting positive roots $\alpha$ with $\alpha(v) \neq 0$. In this way, it can be seen that the minimum for $|\{\alpha \in \Lambda^+ \ | \ \alpha(v) \neq 0 \}|$ among the maximally singular vectors $v$ is $16$. Since the dimension of $\mathfrak p_\alpha $ is $2$, it follows that for any maximally singular vector $v$,
$$\dim(Q_v)\geq 32.$$
Since $32 > 6\times 3=18$, the algorithm always succeeds.

\subsubsection{$E_7$}
As for $E_6$, it can be seen that the minimum for $|\{\alpha \in \Lambda^+ \ | \ \alpha(v) \neq 0 \}|$ among the maximally singular vectors $v$ is $33$. Since the real dimension of each root space is $2$, we have that $$\dim(Q_v)\geq 66.$$
Since $66 > 7\times 3=21$, the algorithm always succeeds.

\subsubsection{$E_8$}
In the case of $E_8$, it turns out that $\dim(Q_v) \geq 114 > 8\times 3=24$ for any maximally singular vector $v$ and thus the algorithm always succeeds.

\subsubsection{$F_4$}
It can be checked that for any maximally singular vector $v$, $$\dim(Q_v) \geq 30 > 4\times 3=12$$ and hence the algorithm always succeeds.

\subsubsection{$G_2$}
In this case, one can easily show that $\dim(Q_v)=10 > 2\times 3=6$ for any maximally singular vector $v$. Thus the algorithm always succeeds.

\subsubsection{$E_{6(6)}$}
The real rank of $E_{6(6)}$ is $6$ and the number of columns of $A$ is $36$.
The reduced root system of $E_{6(6)}$ is $E_6$ and each root space has dimension $1$. As was seen in the $E_6$ case, for any maximally singular vector $v$ we have $\dim(Q_v) \geq 16$. Hence only Stage $6$ with $N(j_6,6 )\geq 1$ could fail.
Assume that Stage $6$ fails.
Observe that at least two $1$'s of $u_{j_6}$ are removed at each stage before Stage $6$, and there are at least $36-15-2=19$ columns not yet removed and not having a $1$ in row $u_{j_6}$. Then this situation allows us, as done before, to modify the algorithm so that Stage $6$ does not fail. Therefore we are done.

\subsubsection{$E_{6(2)}$} The reduced root system of $E_{6(2)}$ is $F_4$ with rank $4$ and thus
we have $\dim(Q_v)\geq 15 > 4 \times 3=12$ for any maximally singular vector $v$ (see the $F_4$ case). Thus the algorithm always succeeds.

\subsubsection{$E_{6(-14)}$} The rank of $E_{6(-14)}$ is $2$ and its maximal compact subgroup is isomorphic to $(\ru(1) \times \mathrm{Spin}(10))/\mathbb Z_4$. Hence it can be seen that $$\dim(Q_v) \geq 9 > 2\times 3=6.$$ This implies that the algorithm always succeeds.

\subsubsection{$E_{6(-26)}$} Recall that the reduced root system of $E_{6(-26)}$ is $A_2$ with rank $2$ and the dimension of the associated symmetric space is $26$. Hence $A$ is a $2$ by $24$ matrix. It can be easily seen that $|u_i| \geq 12 $ for $i=1, 2$. Therefore the algorithm always succeeds.

\subsubsection{$E_{7(7)}$} The simple Lie group $E_{7(7)}$ has the reduced root system of $E_7$. It is easy to see that each root space has dimension $1$. Hence as seen in the case of $E_7$, we have that for any maximally singular vector $v$, $$\dim(Q_v) \geq 33 > 7 \times 3 =21$$
and thus the algorithm always succeeds.

\subsubsection{$E_{7(-5)}$} Since the reduced root system of $E_{7(-5)}$ is $F_4$ and each root space has dimension at least $1$, it follows that for any maximally singular vector $v$, $\dim(Q_v) \geq 15 > 4 \times 3=12$.
Thus the algorithm always succeeds.

\subsubsection{$E_{7(-25)}$} The associated symmetric space to $E_{7(-25)}$ has dimension $54$ and rank $3$. Its reduced root system is $C_3$. Since the number of columns of $A$ is $51$, the dimension of $Q_v$ should be at least $17$. Hence the algorithm always succeeds.

\subsubsection{$E_{8(8)}$} The reduced root system of $E_{8(8)}$ is $E_8$ and each root space has dimension $1$. Hence it can be seen that $$\dim(Q_v) \geq 57 > 8\times 3=24$$ for any maximally singular vector $v$ (see the $E_8$ case). Thus the algorithm always succeeds.

\subsubsection{$E_{8(-24)}$} The associated symmetric space has dimension $112$ and rank $4$ and its reduced root system is $F_4$. Hence the dimension of $Q_v$ is at least $(112-4)/4=27$ for any maximally singular vector $v$. For this reason, the algorithm always succeeds.

\subsubsection{$F_{4(4)}$} The reduced root system of $F_{4(4)}$ is $F_4$ and each root space has dimension $1$. According to the proof in the case of $F_4$, we have that
$$\dim(Q_v) \geq 15 > 4\times 3 $$
for any maximally singular vector $v$. Thus the algorithm always succeeds.

\subsubsection{$G_{2(2)}$} The reduced root system of $G_{2(2)}$ is $G_2$ and each root space has dimension $1$. By the proof of the $G_2$ case, it follows that for any maximally singular vector $v$,
$\dim(Q_v)= 5$. Since $A$ is a $2$ by $6$ matrix, it is possible to choose a successful algorithm directly. Hence we are done.

{\bf Acknowledgements} We thank C. Connell and B. Farb for numerous discussions about weak eigenvalue matching theorem. We also
thank M. Bucher and  T. Hartnick for pointing out some inaccuracy in the earlier version of the paper.

\end{document}